\newfont{\cyr}{wncyr10 scaled 1100}
\theoremstyle{plain}
\newtheorem{theorem}{Theorem}[section]
\newtheorem{corollary}[theorem]{Corollary}
\newtheorem{lemma}[theorem]{Lemma}
\newtheorem{proposition}[theorem]{Proposition}
\theoremstyle{definition}
\newtheorem{definition}[theorem]{Definition}
\newtheorem{examplewr}[theorem]{Example}
\theoremstyle{remark}
\newtheorem{obswr}[theorem]{Observation}
\newtheorem{remarkwr}[theorem]{Remark}
\newenvironment{remark}{\begin{remarkwr}\begin{upshape}}{\end{upshape}\end{remarkwr}}
\DeclareMathOperator{\Reg}{Reg}
\DeclareMathOperator{\Ind}{Ind}
\DeclareMathOperator{\ad}{ad}
\newcommand{\Q}{\mathbb{Q}}
\newcommand{\Z}{\mathbb{Z}}
\newcommand{\C}{\mathbb{C}}
\newcommand{\Gal}{\mathrm{Gal\,}}
\newcommand{\GL}{\mathrm{GL}}
\newcommand{\Frob}{\mathrm{Fr}}
\newcommand{\End}{\mathrm{End}}
\newcommand{\Fr}{\mathrm{Fr}}
\newcommand{\ord}{{\mathrm{ord}}}
\newfont{\gotip}{eufb10 at 12pt}
\newcommand{\cO}{{\mathcal O}}
\newcommand{\cL}{{\mathcal L}}
\newcommand{\lra}{\longrightarrow}
\newcommand{\Norm}{\mathrm{N}}
\DeclareMathOperator{\Hom}{Hom}
\newcommand{\fp}{{\mathfrak p}}
\begin{document}

\title{On the $\cL$-invariant of the adjoint of a weight one modular form}

\author{Marti Roset, Victor Rotger and Vinayak Vatsal}

\address{M.R.: Universit\'e Paris-Sud, B\^atiment 425, 91405 Orsay, France}
\email{marti.roset-julia@u-psud.fr}
\address{V. R.: Departament de Matem\`{a}tica Aplicada II, Universitat Polit\`{e}cnica de Catalunya, C. Jordi Girona 1-3, 08034 Barcelona, Spain}
\email{victor.rotger@upc.edu}
\address{V.V.: Deptartment of Mathematics, University of British Columbia, BC V6T 1Z2 Vancouver, Canada
}
\email{vatsal@math.ubc.ca}

\subjclass{11G18, 14G35}

\maketitle
\centerline\today

\begin{abstract}
	The purpose of this article is proving the equality of two natural $\mathcal L$-invariants attached to the adjoint representation of a weigth one cusp form, each defined by purely analytic, respectively algebraic means. The proof departs from Greenberg's \cite{Gr} definition of the algebraic $\mathcal L$-invariant as a universal norm of a canonical $\Z_p$-extension of $\Q_p$ associated to the representation. We relate it to a certain $2\times 2$ regulator of $p$-adic logarithms of global units by means of class field theory, which we then show to be equal to the analytic $\mathcal L$-invariant computed in \cite{RR1}. 
\end{abstract}

\tableofcontents

\section{Introduction}\label{SectionIntroduction}

Let $g = \sum_{n\geq 1} a_n q^n  \in S_1(N,\eta)$ be a normalized newform of weight one, level $N$ and nebentype character $\eta$, with Fourier coefficients in a finite extension $L$ of $\Q$. Let $p$ be a rational prime not dividing $N$. 
The purpose of this article is proving the equality of two natural $\cL$-invariants that may be associated to the  adjoint representation $\mathrm{ad}^0(g)$ of $g$, each defined by purely analytic, respectively algebraic means. 

That these two $\cL$-invariants should be equal was already conjectured by Greenberg \cite{Gr} and is indeed natural to expect in view of the main conjecture for $\mathrm{ad}^0(g)$, which still remains unsolved in this scenario. 

To put our question in context, recall that a problem of the same flavour already takes place for modular forms of higher weight and level divisible by $p$, where several, a priori inequivalent $\cL$-invariants had been introduced by Mazur-Tate-Teitelbaum, Breuil, Coleman, Fontaine and Mazur, Darmon and Orton: cf.\,\cite{Col} for an account of this fascinating subject; nowadays all these $\cL$-invariants in weight $k\geq 2$ are known to be equal thanks to the work of many authors.  Ever more germane to our setting, the analogous question for the adjoint of cusp forms of weight $k \geq 2$ was resolved by the work of Citro and Dasgupta (\cite{Ci}, \cite{Da})  by methods that are very different from ours. The main substantial difference  is that the $\cL$-invariants arising in the above scenarios take place at a central critical point and are of local nature as they only depend on the restriction to $G_{\Q_p}$ of the underlying Galois representation, while the $\cL$-invariant of this note is global and the associated $L$-function has no critical points. Our setting is rather closer in spirit to Benois' fundamental work on $\cL$-invariants at non-central critical points: cf.\,\cite{Benois} and Horte's recent Ph.D thesis \cite{Horte}, although we do not take this approach in our article.

In order to state explicitly our main theorem, fix embeddings $\bar{\Q} \subset \C$ and $\bar{\Q} \subset \bar{\Q}_p$, let $\tau \in G_\Q$ be complex conjugation and let $I_p \subset G_{\Q_p} \subset G_\Q$ be the corresponding inertia and decomposition subgroups, respectively. Denote by
\[
\varrho_g: \Gal(H_g/\Q) \hookrightarrow \GL(V_g) \simeq \GL_2(L)
\]
the odd Artin representation associated to $g$, where $H_g$ is a finite Galois extension of $\Q$ and $V_g$ is a $2$-dimensional vector space over $L$. 

Set $g^* := g \otimes \eta^{-1} = \sum_{n\geq 1} a^\tau_n q^n  \in S_1(N,\bar\eta)$ and note that $V_{g^*}$ is the contragredient dual of  $V_g$. Conjugation by $\varrho_g$ gives rise to a linear action of $G_\Q$ on $\End(V_g) \simeq V_g \otimes V_g^*$ and we denote by $\ad^0(g)$ the $G_\Q$-equivariant subspace of $\End(V_g)$ corresponding to null-trace endomorphisms. Let
\[
\varrho_{\mathrm{ad}^0(g)}: \Gal(H/\Q) \hookrightarrow \GL(\mathrm{ad}^0(g)) \simeq \GL_3(L)
\]
be the associated {\em adjoint} representation. Here $H$ is a Galois extension sitting in $H_g$. 

We begin by explaining the analytic side of the story. Label and order the roots of the $p$th Hecke polynomial of $g$ as $X^2-a_pX+\eta(p) = (X-\alpha)(X-\beta)$. Let
$$
g_\alpha(q) = g(q) -\beta g(q^p)
$$
denote the $p$-stabilization of $g$ on which the Hecke operator $U_p$ acts with eigenvalue $\alpha$. Note that the roots of the $p$th Hecke polynomial of $g^*$ are $1/\alpha$, $1/\beta$ so we can also consider the $p$-stabilization $g^*_{1/\beta}$. 

Let $L_p(\ad^0(g_\alpha),s)$ be Hida-Schmidt's $p$-adic $L$-function associated to the adjoint of $g_\alpha$. It has an exceptional zero at $s = 1$. One way of seeing this is as follows: By considering $p$-adic families through $g_\alpha$ and $g_{1/\beta}^*$, Hida constructed a $p$-adic $L$-function attached to the representation space $\End(V_g)$, which will be denoted by $L_p(g,g^*,s)$, that is analytic at $s = 1$ \cite{Hi}. Moreover, the work of Dasgupta on the factorization of $p$-adic $L$-functions implies that 
\[
L_p(g,g^*,s) = \zeta_p(s)L_p(\ad^0(g_\alpha),s),
\]
where $\zeta_p(s)$ is the $p$-adic zeta function \cite{Da}. Since $\zeta_p(s)$ has a simple pole at $s = 1$ it follows that $L_p(\ad^0(g_\alpha),1) = 0$. 
It is thus natural  to define the analytic $\mathcal L$-invariant of $\ad^0(g_\alpha)$ as
\begin{equation}\label{def-Lan}
\mathcal L^{\mathrm{an}}(\ad^0(g_\alpha)):= L_p'(\ad^0(g_\alpha),1),
\end{equation} 
where $L_p'(\ad^0(g_\alpha),s)$ is the the first derivative of $L_p(\ad^0(g_\alpha),s)$ with respect to $s$.

Enlarge $L$ if necessary, so that it contains 
all Fourier coefficients of $g_\alpha$. 
We assume from now on that $L$ may be embedded in $\Q_p$. This hypothesis is not substantial and one could easily dispense with it by working throughout in $L\otimes_{\Q} \Q_p$, at the cost of making the notations slightly more heavy and adding no actual new ideas in the proof. 

Consider the following hypothesis on $g$:
\begin{enumerate}
	\item[(A1)] $g$ is $p$-distinguished, i.e.\,$\alpha \ne \beta \, (\mathrm{mod} \, p)$,
	
	\item[(A2)] $\varrho_g$ is not induced from a character of a real quadratic field in which $p$ splits and
	
	\item[(A3)] the reduction of $\varrho_g$ mod $p$ is irreducible.
	
\end{enumerate}

Assuming Hypothesis (A1), (A2) and (A3), Rivero and the second author computed $\mathcal L^{\mathrm{an}}(\ad^0(g_\alpha))$ as an element of
 $\Q_p^\times/L^\times$ in terms $p$-adic logarithms of global units and $p$-units of $H$. This expression can be found in Theorem A' of \cite{RR1} and will be recalled in \S \ref{SectionLan}.

Now we look at the algebraic side of the theory. 
Assuming that $g$ is $p$-regular (i.e. $\alpha \neq \beta$) and Hypothesis (A2) stated above, it is possible to describe the existence of an exceptional zero in terms of the representation space $V = \ad^0(g) \otimes \Q_p$. As we will see, this is mainly because $V$ contains a line where $G_{\Q_p}$ acts trivially. Moreover, following Greenberg \cite{Gr} one can attach an algebraic $\mathcal L$-invariant, attached to $V$ and $\alpha$, arising from a certain non-zero global cohomology class of $H^1(\Q,V)$. The definition is in terms of universal norms of a $\Z_p$-extension of $\Q_p$ determined by this class.  We will call this invariant Greenberg's $\mathcal L$-invariant and we will denote it by $\mathcal L^{\mathrm{Gr}}(\ad^0(g_\alpha))$.



As we state below explicitly, the main theorem of this note is that, under (A1-2-3) and a minor additional assumption, the above two $\cL$-invariants are equal in $\Q_p^\times \,(\mbox{mod } L^\times)$. The hint that motivated us to look into this question was an old calculation of Greenberg  in \cite{Gr2}, that was explained by the third author to the other two during a visit to Barcelona.  In the very particular case where $g \in S_1(23, \eta)$ is the theta series of an unramified Hilbert class character of $K = \Q(\sqrt{-23})$ and $p = 23$, Greenberg defined and computed an invariant associated to $g$ which remained largely mysterious and adhoc. This note was born with the aim of placing this calculation into a more general and conceptual framework.

The formula Greenberg obtained was the following. Let $H$ be the Hilbert class field of $K$, which is the splitting field of $X^3 - X + 1$. Note that $\Gal(H/\Q) \simeq S_3$ and let $r$ be a generator of $\Gal(H/K)$. Let $\fp_1$ be the prime of $H$ above $p$ singled out by the embedding $\bar{\Q} \subset \bar{\Q}_p$ and let $H_1$ be the subfield of $H$ fixed by the corresponding decomposition subgroup at $\fp_1$. We have that $H_1$ is a cubic extension with one real embedding and a pair of complex embeddings. Let $\varepsilon \in \cO_{H_1}^\times$ be a fundamental unit of $H_1$ and let $\pi$ be a generator of the prime ideal of $H_1$ below $\fp_1$. Then, in \cite[page 230]{Gr2}, Greenberg computed an universal norm arising from his study of the Selmer group of a Galois representation naturally associated to $g$, with the following explicit output (the notation on the left-hand side is ours):
\begin{equation}\label{LGr_p=23}
\mathcal L^{\mathrm{Gr}}(\ad^0(g_\alpha)) = 3\left(\log_p(\pi) - \frac{\log_p(r(\pi)/r^2(\pi))}{\log_p(r(\varepsilon)/r^2(\varepsilon))}\log_p(\varepsilon)\right).
\end{equation}

Moreover, this calculation relates $\mathcal L^{\mathrm{Gr}}(\ad^0(g))$ with the characteristic ideal of the Pontryagin dual of a Selmer group attached to a concrete filtration of $V$ (which depends on $\alpha$). This is also explained in Remark 4.3 of the recent paper  \cite{GV} of Greenberg and the third author and, for a different setting, in Proposition 4 of \cite{Gr}.

Even though the case mentioned above is not considered in our setting, the shape of the expression for $\mathcal L^{\mathrm{an}}(\ad^0(g_\alpha))$ of \cite[Theorem A' ]{RR1} is strikingly similar \eqref{LGr_p=23}. This fact, together with the theoretical evidence that $L_p(\ad^0(g_\alpha),s)$ should be equal to the characteristic ideal mentioned above, suggested that the equality of this two $\mathcal L$-invariants should hold in greater generality. 

\begin{theorem}
	Assume that $g$ satisfies Hypothesis (A1-2-3) mentioned above. In addition, if $g$ is an exotic form assume that $\alpha \neq -\beta$. Then,
	\[
	\mathcal L^{\mathrm{Gr}}(\ad^0(g_\alpha)) \equiv \mathcal L^{\mathrm{an}}(\ad^0(g_\alpha)) \quad (\mathrm{mod} \, L^\times).
	\]
\end{theorem}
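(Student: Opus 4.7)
The plan is to follow the strategy outlined in the abstract: analyze the local structure of $\ad^0(g)$ at $p$, unpack Greenberg's definition of $\cL^{\mathrm{Gr}}(\ad^0(g_\alpha))$ as a universal norm in a canonical $\Z_p$-extension of $\Q_p$, translate this via class field theory into a $2\times 2$ regulator of $p$-adic logarithms of global units and $p$-units of $H$, and then match the resulting expression with the formula for $\cL^{\mathrm{an}}(\ad^0(g_\alpha))$ from \cite[Theorem A']{RR1}.

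First I would decompose $V := \ad^0(g)\otimes\Q_p$ as a $G_{\Q_p}$-representation. Hypothesis (A1) forces $\varrho_g|_{G_{\Q_p}}$ to split as $\chi_\alpha\oplus\chi_\beta$, where $\chi_\alpha,\chi_\beta$ are the unramified characters sending Frobenius to $\alpha,\beta$ respectively, so that $V|_{G_{\Q_p}}$ decomposes as the sum of three lines on which Frobenius acts by $1$, $\alpha/\beta$ and $\beta/\alpha$. The choice of stabilization $g_\alpha$ singles out a two-step filtration $V^{++}\subset V^{+}\subset V$ whose graded pieces realize these lines in a fixed order; the existence of the trivial quotient $V^{+}/V^{++}$ is exactly the source of the exceptional zero. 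Under (A2)--(A3) one checks that $V^{G_\Q}=0$ and that the image of $V^{G_\Q_p}$ in $V^{+}/V^{++}$ is a line, which guarantees the existence of a canonical non-zero global class $\kappa\in H^1(\Q,V)$ whose image in $H^1(\Q_p,V/V^{+})$ is trivial and whose projection to the trivial quotient $H^1(\Q_p,V^{+}/V^{++})\simeq H^1(\Q_p,\Q_p)$ is non-zero.

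Second, I would recall that $H^1(\Q_p,\Q_p)\simeq\Q_p^2$ via local class field theory, with coordinates recording the unramified and cyclotomic components of the $\Z_p$-extension associated to the class; the Greenberg $\cL$-invariant $\cL^{\mathrm{Gr}}(\ad^0(g_\alpha))$ is by definition the ratio of the ordinary (unramified) to the cyclotomic component of the image of $\kappa$. To turn this into an explicit regulator, one constructs $\kappa$ concretely from global units and $p$-units of $H$. Namely, inflation--restriction identifies $H^1(\Q,V)$ with the trivial-eigencomponent of $(\ad^0(g)\otimes(\cO_H[1/\fp])^\times\otimes\Q_p)$ under $\Gal(H/\Q)$, where $\fp$ lies above $p$, and the $\alpha$-filtration imposes a linear condition on this eigenspace. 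The solutions of this system produce a unique class (up to the scalar ambiguity that disappears in $\Q_p^\times/L^\times$) built from a fundamental unit $u$ and a $p$-unit $\pi$ of the decomposition subfield of $\fp$.

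Third, I would implement the class-field-theory dictionary. The restriction of $\kappa$ at $\fp$, composed with the local reciprocity map, gives a homomorphism $H_\fp^\times\to\Q_p$ of the form $x\mapsto\log_p(x)-c\cdot\ord_p(x)$, and the condition that the global class vanish in the non-trivial graded pieces of the $\alpha$-filtration translates into linear relations among the $p$-adic logarithms of Galois conjugates of $u$ and $\pi$. Solving for the constant $c$ yields an identity of the form
\[
\cL^{\mathrm{Gr}}(\ad^0(g_\alpha))\ \equiv\ \frac{\det\begin{pmatrix}\log_p\pi_1 & \log_p\pi_2\\ \log_p u_1 & \log_p u_2\end{pmatrix}}{\log_p u_2}\pmod{L^\times},
\]
where $u_i,\pi_i$ denote the components of $u$ and $\pi$ in the two eigenspaces of $\ad^0(g)|_{G_{\Q_p}}$ picked out by the $\alpha$-filtration. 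Specializing to the CM setup of \cite{Gr2} one recovers \eqref{LGr_p=23} verbatim.

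Finally, I would compare this expression with the formula recalled in \S\ref{SectionLan} from \cite[Theorem A']{RR1}. Since the latter is also a $2\times 2$ regulator of $p$-adic logarithms of global units and $p$-units in the same Galois eigencomponents, the matching should reduce to checking that both sides use the same normalizations: the filtration used to define $\kappa$ and the $p$-stabilization $g_\alpha$ used in \cite{RR1} correspond to the same ordering of the eigenvalues $\alpha,\beta$ of Frobenius, and the units are chosen in compatible eigenspaces under the adjoint action of $\Gal(H/\Q)$. The auxiliary assumption $\alpha\neq-\beta$ in the exotic case is invoked exactly to ensure that the two relevant eigenspaces remain distinct after further decomposition, so that the regulator is non-degenerate on both sides of the identity.

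The main obstacle I expect is step three: setting up the class-field-theory translation with enough care that the universal norm defining $\cL^{\mathrm{Gr}}$ matches \textbf{exactly} the regulator of \cite{RR1}, with the same sign conventions, eigenspace choices and normalizations of logarithms on $p$-units. Steps one and two are largely structural, while step four is essentially bookkeeping once step three is done correctly.
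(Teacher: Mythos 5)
Your high-level strategy is the one the paper follows: decompose $V=\ad^0(g)\otimes\Q_p$ at $p$, unwind Greenberg's definition via universal norms and local class field theory, convert the universal-norm condition into linear relations among $p$-adic logarithms of units and $p$-units, and match the resulting $2\times 2$ regulator with Theorem~A$'$ of \cite{RR1}. However, there are two genuine gaps. First, your identification of $H^1(\Q,V)$ is incorrect: inflation--restriction together with global class field theory identifies $H^1(\Q_\Sigma/\Q,V)$ with $\Hom_\Delta\bigl(\prod_i U_{\fp_i}^1/\overline{\cO_H^1},\,V\bigr)$ (Proposition~\ref{dimLHS}), that is, with $\Delta$-equivariant maps \emph{from} local units modulo global units \emph{into} $V$. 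This is not the $\Delta$-invariants of $\ad^0(g)\otimes\cO_H[1/p]^\times$ that you describe --- that latter space is $\Hom_{G_\Q}(\ad^0(g),\cO_H[1/p]^\times\otimes L)$, which the paper uses for an entirely different purpose (to build the elements $u,v$ entering the analytic regulator of Theorem~\ref{main1'}). The two spaces are dual-flavoured but not the same object, and the class $f$ generating $C$ is not ``built from a fundamental unit and a $p$-unit'': rather, the paper deduces the relation $f(w)=0$ for $w=(x,1/\pi,\dots,1/\pi)$ (Proposition~\ref{f(w)=0KQIpinert}) and then solves for $x$, which is where the regulator appears.

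Second, and more consequentially, your proposal presents the computation as uniform, whereas the paper must split into four cases depending on the decomposition of $\ad^0(g)$ into irreducibles: imaginary quadratic CM with $p$ split, imaginary quadratic CM with $p$ inert, real quadratic CM with $p$ inert, and exotic (Sections~\ref{SectionIQpsplits}--\ref{SectionExotic}). The structure of $C$, the description of the auxiliary subspace $Z_C$, and the extraction of the constant $b$ differ substantially across the cases; for instance, when $K$ is imaginary with $p$ split one has $u_1=0$ and $\cL^{\mathrm{Gr}}\equiv\log_p(v_1)$, so the purported $2\times2$ determinant degenerates and a separate argument is needed. Without the case split, the argument does not close. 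Finally, your explanation of where $\alpha\neq-\beta$ enters is too vague: it is used precisely in Proposition~\ref{b=v/u} to ensure $\alpha/\beta\neq\beta/\alpha$ so that applying the idempotent $\theta_{\beta/\alpha,\Delta_{\fp_1}}$ isolates the term $b\log_p(u_{\beta/\alpha})$ and kills the unwanted $\varepsilon_{\alpha/\beta}$-contribution, a concrete step in the exotic case rather than a mere non-degeneracy condition.
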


The strategy of the proof consists in extending Greenberg's calculation presented above: we compute $\mathcal L^{\mathrm{Gr}}(\ad^0(g_\alpha))$ modulo $\Q^\times$ in terms of global units and $p$-units of $H$. Then, we compare the result with the formula given in Theorem A' of \cite{RR1} in order to prove the theorem.

\vspace{0.15cm}
{\bf Acknowledgments.}  This work has received funding from the European Research Council (ERC) under the European Union's Horizon 2020 research and innovation programme (grant agreement No 682152). The first and the third authors wish to thank Universitat Politècnica de Catalunya and the people from the Number Theory Group of Barcelona for their hospitality during their stay in Barcelona, where part of this research was conducted. The first author expresses his thanks especially to Francesca Gatti and Óscar Rivero, for their willingness to answer many of the questions he had during the preparation of this paper.

\section{The analytic $\mathcal L$-invariant}\label{SectionLan}
Consider the notation given in \S\ref{SectionIntroduction}. We introduce the necessary tools to state Theorem A' of \cite{RR1}. This theorem allows us to express $\mathcal L^{\mathrm{an}}(\ad^0(g_\alpha))$ in terms of global units and $p$-units of $H$ and it will be used later to prove our main result. 

Note that, as $G_{\Q_p}$-modules, we have
\begin{equation}\label{G_Qpdecomposition}
\ad^0(g) = L \oplus L^{\alpha/\beta} \oplus L^{\beta/\alpha},
\end{equation}
where each line is characterized by the property that the arithmetic Frobenius $\Frob_p \in G_{\Q_p}$ acts on it with eigenvalue $1$, $\alpha/\beta$ and $\beta/\alpha$, respectively. Recall that this decomposition is canonical even when $\alpha/\beta = \beta/\alpha$ (see \S2 of \cite{BD} for a detailed explanation of the decomposition stated in \eqref{G_Qpdecomposition}).

\begin{definition}\label{defe}
	Following the decomposition given in \eqref{G_Qpdecomposition}, let $e_1, e_{\alpha/\beta}, e_{\beta/\alpha}$ be generators of the lines $L,\ L^{\alpha/\beta},\ L^{\beta/\alpha}$ respectively. Clearly $\{e_1, e_{\alpha/\beta}, e_{\beta/\alpha}\}$ is a basis of $\ad^0(g)$.
\end{definition}

Now we define the global units and $p$-units that will appear in the expression of $\mathcal L^{\mathrm{an}}(\ad^0(g_\alpha))$. For that we neeed to study the space
\[
\Hom_{G_\Q}(\ad^0(g), \cO_H[1/p]^\times/p^\Z \otimes L).
\] 
By Minkowski's proof of Dirichlet's unit theorem we have that
\[
\dim_{L}\Hom_{G_\Q}(\ad^0(g),\cO_H^\times \otimes L) = 1, \quad \dim_{L}\Hom_{G_\Q}(\ad^0(g), \cO_H[1/p]^\times/p^\Z \otimes L) = 2.
\]
Fix a generator $u$ of $\Hom_{G_\Q}(\ad^0(g),\cO_H^\times \otimes L)$ and $v \in \Hom_{G_\Q}(\ad^0(g), \cO_H[1/p]^\times/p^\Z \otimes L)$ such that $\{u,v\}$ is a basis of $\Hom_{G_\Q}(\ad^0(g), \cO_H[1/p]^\times/p^\Z \otimes L)$. 

\begin{definition}\label{definitionuv}
	Following the notation above, define
	\[
	u_{1} = u(e_1), u_{\alpha/\beta} = u(e_{\alpha/\beta}), u_{\beta/\alpha} = u(e_{\beta/\alpha}),
	v_{1} = v(e_1), v_{\alpha/\beta} = v(e_{\alpha/\beta}), v_{\beta/\alpha} = v(e_{\beta/\alpha}).
	\]
\end{definition}

By construction we have $u_1, v_1 \in \Q_p^\times$ and, 
\[
\Frob_p(u_{\alpha/\beta}) = \frac{\alpha}{\beta} u_{\alpha/\beta}, \quad \Frob_p(v_{\alpha/\beta}) = \frac{\alpha}{\beta} v_{\alpha/\beta}, \quad \Frob_p(u_{\beta/\alpha}) = \frac{\beta}{\alpha} u_{\beta/\alpha}, \quad \Frob_p(v_{\beta/\alpha}) = \frac{\beta}{\alpha} v_{\beta/\alpha}.
\]

Let
$$
\log_p: H_{\fp_1}^\times \otimes L \lra H_{\fp_1} \otimes L
$$
denote the usual $p$-adic logarithm. Here, $\fp_1$ is the prime of $H$ singled out by the embeding $\bar{\Q} \subset \bar{\Q}_p$ and $H_{\fp_1}$ is the completion of $H$ at this prime.

Recall the definition of the analytic $\cL$-invariant.

\begin{definition}
	Define the analytic $\mathcal L$-invariant attached to $g$ and $\alpha$ by
	\[
	\mathcal L^{\mathrm{an}}(\ad^0(g_\alpha)):= L_p'(\ad^0(g_\alpha),1),
	\]
	where $L_p'(\ad^0(g_\alpha),1)$ is the derivative of Hida-Schmidt's $p$-adic $L$-function associated to the adjoint of $g_\alpha$ at $s=1$.
\end{definition}

The following result was proved as Theorem A' of \cite{RR1}.
\begin{theorem}\label{main1'}
	Assume that $g$ satsifies the hypothesis (A1-2-3) stated in the introduction. Following the notation above, we have
	\[
	\mathcal L^{\mathrm{an}}(\ad^0(g_\alpha))  \equiv  \frac{ \log_p(u_1)\log_p(v_{\beta/\alpha}) - \log_p(u_{\beta/\alpha})\log_p(v_1)}{\log_p(u_{\beta/\alpha})} \quad (\mathrm{mod} \, L^\times).
	\]
\end{theorem}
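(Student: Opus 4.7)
The plan is to reduce the problem, via Dasgupta's factorization, to the explicit evaluation of $L_p(g,g^*,1)$, to compute this value through a two-variable Hida deformation, and finally to translate it into $p$-adic logarithms of units by means of a Perrin-Riou-style reciprocity law.

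I would first exploit the factorization $L_p(g,g^*,s) = \zeta_p(s)\,L_p(\ad^0(g_\alpha),s)$. Since $\zeta_p(s)$ has a simple pole at $s=1$ with residue lying in $\Q^\times \subset L^\times$, since $L_p(\ad^0(g_\alpha),s)$ has a simple exceptional zero there and $L_p(g,g^*,s)$ is finite, matching Laurent expansions at $s=1$ reduces the statement to proving that
\[
L_p(g,g^*,1) \;\equiv\; \frac{\log_p(u_1)\log_p(v_{\beta/\alpha}) - \log_p(u_{\beta/\alpha})\log_p(v_1)}{\log_p(u_{\beta/\alpha})} \pmod{L^\times}.
\]

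To evaluate $L_p(g,g^*,1)$ I would next deform $g_\alpha$ and $g^*_{1/\beta}$ in Hida families $\mathbf{g}$ and $\mathbf{g}^*$, whose existence is guaranteed by the $p$-distinguishedness hypothesis (A1), and view $L_p(g,g^*,s)$ as the weight $(1,1)$ specialization of a two-variable Rankin--Hida $p$-adic $L$-function. Since $s=1$ lies outside the classical range of interpolation, the value must be analyzed indirectly, e.g.\ via an improved $p$-adic $L$-function in the sense of Greenberg--Stevens or equivalently a Garrett-type triple product formula pairing $\mathbf{g}$, $\mathbf{g}^*$ with an auxiliary $p$-depleted weight-one Eisenstein series. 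Then, under (A2)-(A3), the space $\Hom_{G_\Q}(\ad^0(g),\cO_H[1/p]^\times/p^\Z \otimes L)$ is two-dimensional with basis $\{u,v\}$ as in Definition \ref{definitionuv}, and I would apply a $p$-adic regulator of Perrin-Riou / Coleman type to convert the triple-product value into the $p$-adic logarithms $\log_p(u_\bullet)$, $\log_p(v_\bullet)$ along the Frobenius eigendecomposition \eqref{G_Qpdecomposition}. Only the pairings between eigenlines whose characters multiply to the trivial character contribute, which matches the line of $u_1, v_1$ with that of $u_{\beta/\alpha}, v_{\beta/\alpha}$ and produces the $2\times 2$ determinant in the numerator, whereas the normalization by the exceptional-zero direction $L^{\alpha/\beta}$ contributes $\log_p(u_{\beta/\alpha})$ in the denominator.

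The principal obstacle is the middle step: since $s=1$ is not in the classical interpolation range of $L_p(g,g^*,s)$, its defining property cannot be exploited directly, and one must work with a genuinely two-dimensional $p$-adic deformation, control the triple product in a non-critical regime, and then pin down the image of the regulator on the correct Frobenius eigenspaces with enough precision to recover exactly the combinations $u_1, v_1, u_{\beta/\alpha}, v_{\beta/\alpha}$ appearing on the right-hand side.
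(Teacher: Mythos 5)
The paper itself contains no argument for this statement: its ``proof'' is the single line ``See Theorem A' of \cite{RR1}'', so the theorem is imported wholesale from that reference. Measured against that, your reduction step is fine and is indeed how the statement is framed: since $\zeta_p(s)$ has a simple pole at $s=1$ with residue $1-\tfrac1p\in\Q^\times\subset L^\times$, Dasgupta's factorization $L_p(g,g^*,s)=\zeta_p(s)L_p(\ad^0(g_\alpha),s)$ turns the claim into the assertion that $L_p(g,g^*,1)$ equals the displayed ratio of logarithms modulo $L^\times$. But that assertion is precisely the substance of Theorem A/A' of \cite{RR1}; it is not a corollary of general principles, and your proposal does not prove it. The passage ``apply a $p$-adic regulator of Perrin-Riou / Coleman type to convert the triple-product value into the $p$-adic logarithms'' and the claim that ``only the pairings between eigenlines whose characters multiply to the trivial character contribute'' are exactly the points that require work: in \cite{RR1} the value $L_p(g,g^*,1)$ is reached through Beilinson--Flach classes attached to the pair $(\hg,\hg^*)$ of Hida families, the explicit reciprocity law, and a \emph{derived} class forced by the exceptional-zero phenomenon at this non-interpolation point; it is this derived-class computation, carried out under (A1)-(A2)-(A3), that produces the specific combination $\log_p(u_1)\log_p(v_{\beta/\alpha})-\log_p(u_{\beta/\alpha})\log_p(v_1)$ and, crucially, the denominator $\log_p(u_{\beta/\alpha})$, whose provenance your sketch does not explain at all. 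You yourself flag this middle step as ``the principal obstacle'', so the proposal is a plausible plan in the same spirit as the cited proof rather than a proof.

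Two smaller points. First, the existence and good behaviour of the Hida family through the weight-one point $g_\alpha$ is not a consequence of (A1) alone: the geometry of the eigencurve at classical weight-one points (the reason \cite{BD} is cited throughout) requires $p$-regularity and the exclusion of RM forms with $p$ split, i.e.\ the hypotheses (A2)/(B1)-(B2), and smoothness/\'etaleness there is exactly what makes the two-variable deformation argument meaningful. Second, the mechanism you describe (``Garrett-type triple product with an auxiliary Eisenstein series'') is closer to the construction of the Hida--Rankin $p$-adic $L$-function itself than to the evaluation at $s=1$; the evaluation outside the interpolation range is where all the difficulty sits, and no amount of formal matching of Frobenius eigenlines substitutes for the Euler-system input.
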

\begin{proof}
	See Theorem A' of \cite{RR1}.
\end{proof}
\begin{remark}
	The elements $e_1, e_{\alpha/\beta}, e_{\beta/\alpha}$ are well defined up to a constant. Similarly, if $\{u, v\}$ is a pair as above any other possible pair is of the form $\{x_uu, x_v u + y_v v\}$ with $x_u, y_v \in L^\times$ and $x_v \in L$. From here it is a computation to check that the right hand side of the expression for $\mathcal L^{\mathrm{an}}(\ad^0(g_\alpha)) $ is well defined modulo $L^\times$.
\end{remark}

\section{Greenberg's $\mathcal L$-invariant}\label{SectionLGr}
Consider the same notation as in \S \ref{SectionIntroduction}. Following  \cite{Gr}, in this section we define Greenberg's $\mathcal L$-invariant $\mathcal L^\mathrm{Gr}(\ad^0(g_\alpha))$ attached to $g \in S_1(N,\eta)$, $\alpha$ and $p$, a rational prime such that $p \nmid N$. We then relate $\mathcal L^\mathrm{Gr}(\ad^0(g_\alpha))$ to a global $\Z_p$-extension of $H$ which will be the crucial ingredient in later sections for computing it.

Recall that $g$ has associated the Artin representation $\varrho_g$ and its adjoint is
\[
\varrho_{\ad^0(g)}: \Gal(H/\Q) \hookrightarrow \GL(\ad^0(g)) \simeq \GL_3(L),
\]
where $H$ is a finite Galois extension of $\Q$ and $L$ is a sufficiently large finite extension of $\Q$ that we assumed could be embedded in $\Q_p$. Note that since $p \nmid N$ we have that $p$ is unramified in $H$. We begin stating the hypothesis we make about $g$ and introducing some notation. Suppose that:

\begin{enumerate}
	
	\item[(B1)] $g$ is $p$-regular, i.e. $\alpha/\beta \neq 1$, and
	
	\item[(B2)] $\varrho_g$ is not induced from a character of a real quadratic field in which $p$ splits.
	
\end{enumerate}
These hypothesis are less restrictive than Hypothesis (A1-2-3) made in the introduction in order to apply Theorem A' of \cite{RR1}.
 
Let $V$ be a $p$-adic realization of $\ad^0(g)$, i.e. $V = \ad^0(g) \otimes \Q_p$. Let $\fp_1$ be the prime of $H$ above $p$ singled out by the embedding $\bar{\Q} \subset \bar{\Q}_p$. Let $\Sigma = \{p , \infty \} \cup \mathrm{Ram}(V)$, where $\mathrm{Ram}(V)$ is the set of rational primes that ramify in $\ad^0(g)$ and let $\Q_\Sigma$ be the maximal extension of $\Q$ unramified outside $\Sigma$ (note that $H \subset \Q_\Sigma$). Let $\Delta = \Gal(H/\Q)$. 

By \eqref{G_Qpdecomposition}, we have that as $G_{\Q_p}$-modules
\begin{equation}\label{decVG_Qp}
V = \Q_p \oplus \Q_p^{\alpha/\beta} \oplus \Q_p^{\beta/\alpha},
\end{equation}
where these lines are determined as in \eqref{G_Qpdecomposition}. Recall that this decomposition is canonical even when $\alpha/\beta = \beta/\alpha$. In order to define $\mathcal L^\mathrm{Gr}(\ad^0(g_\alpha))$ fix the following $G_{\Q_p}$-equivariant line of $V$.
\begin{definition}
	Following the notation above, let $F^+V = \Q_p^{\beta/\alpha}$. Note that $V/F^+V = \Q_p \oplus \Q_p^{\alpha/\beta}$ and we have a natural injection $H^1(\Q_p,\Q_p) \subset H^1(\Q_p,V/F^+V)$.
\end{definition}
\begin{remark}
	The choice $F^+V = \Q_p^{\alpha/\beta}$ would correspond to $\mathcal L^{\mathrm{Gr}}(\ad^0(g_\beta))$.
\end{remark}

Before starting the procedure of defining $\mathcal{L}^\mathrm{Gr}(\ad^0(g_\alpha))$ we explain its outline. From the study of the $\Q_p$-vector space $H^1(\Q_\Sigma/\Q,V)$ and of the natural map
\begin{equation}\label{lambda}
\lambda: H^1(\Q_\Sigma/\Q,V) \to H^1(\Q_p, V/F^+V)
\end{equation}
we will identify a $1$-dimensional subspace of $H^1(\Q_p, \Q_p) = \Hom(G_{\Q_p}, \Q_p)$ (here, and from now on, we are considering continuous homomorphisms). Then, using class field theory, we will express this $1$-dimensional subspace as $\Hom(\Gal(H_\infty/\Q_p), \Q_p)$, where $H_\infty$ is a $\Z_p$-extension of $\Q_p$. Finally, $\mathcal L^\mathrm{Gr}(\ad^0(g_\alpha))$ is defined in terms of $H_\infty$ (in fact, it is defined in terms of some universal norm of $H_\infty$).

\begin{remark}
	As we will see, the fact that $V/F^+V$ contains a line where $G_{\Q_p}$ acts trivially is essential in order to determine the $1$-dimensional subspace of $\Hom(G_{\Q_p}, \Q_p)$ and define the $\mathcal L$-invariant. In other words, it is the main reason why we have an exceptional zero.
\end{remark}

\subsection{Identification of the $1$-dimensional subspace of $\Hom(G_{\Q_p},\Q_p)$}
We begin studying the $\Q_p$-vector space $H^1(\Q_\Sigma/\Q,V)$: we give a description of it using class field theory and we compute its dimension. For that, we introduce the following notation. Let $\fp_1, \dots, \fp_n$ be the primes of $H$ above $p$. For every $i$, let $H_{\fp_i}$ be the completion of $H$ at $\fp_i$, let $\Delta_{\fp_i} = \Gal(H_{\fp_i}/\Q_p) \subset \Delta$ be the corresponding decomposition subgroup and denote by $U_{\fp_i}^1 \subset H_{\fp_i}^\times$ the subgroup of principal units. Note that $\Delta_{\fp_i}$ acts on $U_{\fp_i, \Q_p}^1$ as the regular representation , where we denote $U_{\fp_i, \Q_p}^1 = U_{\fp_i}^1 \otimes_{\Z_p} \Q_p$. This can be proven using the $\Delta_{\fp_i}$-equivariant isomorphism $\log_p: U_{\fp_i, \Q_p}^1 \to H_{\fp_i}$ given by the $p$-adic logarithm map. By identifying $\prod_{i = 1}^n U_{\fp_i,\Q_p}^1$ with $\Ind_{\Delta_{\fp_1}}^{\Delta}(U_{\fp_1,\Q_p}^1)$ in the natural way, we see that $\Delta$ acts on $\prod_{i = 1}^n U_{\fp_i,\Q_p}^1$ as the regular representation (since $\Ind_{\Delta_{\fp_1}}^\Delta(\Reg_{\Delta_{\fp_1}}) = \Reg_{\Delta}$). Let $\cO_H^1$ be the global units of $H$ that are congruent to $1$ modulo $\fp_i$ for every $i$. The diagonal embedding 
\[
\cO_H^1 \to \prod_{i = 1}U_{\fp_i}^1
\]
is $\Delta$-equivariant. This map can be extended to $\cO_H^1 \otimes_\Z \Z_p \to \prod_{i = 1}^n U_{\fp_i}^1$ and we denote the image of this map by $\overline{\cO_{H}^1}$. Furthermore, tensoring these objects by $\Q_p$ (over $\Z_p$) we obtain the $\Delta$-equivariant map $\cO_H^1 \otimes_\Z \Q_p  \to \prod_{i = 1}^n U_{\fp_i,\Q_p}^1$. Let $\cO_{H,\Q_p}^1 = \cO_H^1 \otimes_\Z \Q_p$ and let  $\overline{\cO_{H,\Q_p}^1}$ be the image of $\cO_{H,\Q_p}^1$ by the last map. Note the following result from class field theory.

\begin{lemma}\label{cft}
	Let $\tilde{H}$ be the compositum of all $\Z_p$-extensions of $H$. The global Artin map gives a $\Delta$-equivariant map
	\[
	\prod_{i = 1}^n U_{\fp_i}^1 / \overline{\cO_{H}^1} \to \Gal(\tilde{H}/H)
	\]
	which is injective and has finite cokernel.
\end{lemma}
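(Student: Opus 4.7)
My plan is to derive the lemma from the standard class-field-theoretic description of the maximal abelian pro-$p$ extension of $H$ unramified outside $p$. The first step is to observe that every $\Z_p$-extension of $H$ is unramified outside the primes above $p$: the tame inertia at any prime $\ell\neq p$ is procyclic of order coprime to $p$ (so admits no surjection onto $\Z_p$), and archimedean decomposition groups are at most of order two. If $M$ denotes the maximal abelian pro-$p$ extension of $H$ unramified outside $p$, it follows that $\tilde H\subseteq M$ and, since $\Gal(\tilde H/H)$ is a free $\Z_p$-module of finite rank, the quotient map $\Gal(M/H)\twoheadrightarrow\Gal(\tilde H/H)$ identifies its target with the maximal torsion-free quotient of the finitely generated $\Z_p$-module $\Gal(M/H)$; in particular it has finite kernel, namely the torsion subgroup.

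The main step is to apply the global Artin reciprocity map, restricted to ideles supported at the primes above $p$, and then further restricted to the principal units $\prod_i U_{\fp_i}^1$. By global class field theory, combined with the identification of the maximal everywhere-unramified pro-$p$ subextension of $M$ with the $p$-Hilbert class field, this will fit into an exact sequence of $\Delta$-modules
\[
0\lra \prod_{i=1}^n U_{\fp_i}^1/\overline{\cO_H^1}\lra \Gal(M/H)\lra A_p\lra 0,
\]
where $A_p$ is the $p$-part of the ideal class group of $H$. Composing with $\Gal(M/H)\twoheadrightarrow\Gal(\tilde H/H)$ then produces the map of the lemma. Its cokernel is a quotient of the finite group $A_p$, hence finite; and its kernel is the intersection of the image of $\prod_i U_{\fp_i}^1/\overline{\cO_H^1}$ with the finite torsion subgroup of $\Gal(M/H)$, hence also finite (and trivial after tensoring with $\Q_p$, which is all that will be used in the $\mathcal{L}$-invariant computations). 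The $\Delta$-equivariance is automatic from the naturality of the Artin map with respect to the $\Delta$-action on ideles, local units and global units.

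The main obstacle is the identification of $\overline{\cO_H^1}$ as the precise kernel of the restricted reciprocity map $\prod_i U_{\fp_i}^1\to\Gal(M/H)$: this comes down to unwinding the fundamental exact sequence of global class field theory and checking that a principal idele which is a unit away from $p$ and a principal unit at every $\fp_i$ is exactly (the diagonal image of) an element of $\cO_H^1$, and then remembering that the definition of $\overline{\cO_H^1}$ in \S\ref{SectionLGr} is precisely the $\Z_p$-linear extension of that diagonal image inside $\prod_i U_{\fp_i}^1$. Once that input is in hand the rest of the argument is formal.
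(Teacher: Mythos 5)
The paper itself gives no proof of Lemma \ref{cft}: it is quoted as a standard consequence of global class field theory, so the comparison can only be with the standard argument, and your outline is indeed that argument. Your reduction is correct in structure: every $\Z_p$-extension of $H$ is unramified outside $p$, so $\tilde H$ sits inside the maximal abelian pro-$p$ extension $M$ of $H$ unramified outside $p$; the reciprocity map identifies $\prod_i U_{\fp_i}^1/\overline{\cO_H^1}$ with a subgroup of $\Gal(M/H)$ whose cokernel is a quotient of the $p$-part of the class group; and $\Gal(M/H)\twoheadrightarrow\Gal(\tilde H/H)$ has kernel the $\Z_p$-torsion of $\Gal(M/H)$. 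This gives $\Delta$-equivariance, finite cokernel, and the identification of the kernel of $\prod_i U_{\fp_i}^1\to\Gal(M/H)$ with $\overline{\cO_H^1}$, exactly as one would want.

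There are, however, two points to flag. The substantive one is that what you prove is weaker than what the lemma asserts: you obtain a \emph{finite} kernel for the composite map into $\Gal(\tilde H/H)$, not injectivity. Injectivity would require that the image of $\prod_i U_{\fp_i}^1/\overline{\cO_H^1}$ inside $\Gal(M/H)$ meet the finite subgroup $\Gal(M/\tilde H)$ trivially (equivalently, $M=\tilde H\cdot M_0$ with $M_0$ the maximal unramified subextension), and nothing in your argument addresses this; your parenthetical escape ``trivial after tensoring with $\Q_p$'' is accurate about how the lemma is used (the proof of Proposition \ref{dimLHS} only ever applies $\Hom_\Delta(-,V)$ with $V$ a $\Q_p$-vector space, for which finite kernel and cokernel suffice), but it silently replaces the statement of the lemma by a weaker one, and you should say so explicitly rather than present the argument as a proof of the lemma verbatim. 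The minor point is your justification that $\Z_p$-extensions are unramified outside $p$: the tame inertia quotient at $\ell\neq p$ is procyclic isomorphic to $\prod_{q\neq\ell}\Z_q$, which certainly does surject onto $\Z_p$, so ``procyclic of order coprime to $p$'' is not the right reason. The correct argument is that in an abelian extension the inertia at $\lambda\mid\ell$ is, by local class field theory, a quotient of $\cO_{H_\lambda}^\times$, whose maximal pro-$p$ quotient is finite for $\ell\neq p$, while a nontrivial closed subgroup of $\Z_p$ is infinite; the conclusion is standard, but the reason you give would not survive scrutiny.
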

The following result is already explained in \cite{BD} and \cite{GV}, but we present it here since it will be useful in the next sections in order to compute Greenberg's $\mathcal L$-invariant.
\begin{proposition}\label{dimLHS}
	The vector spaces $H^1(\Q_\Sigma/\Q, V)$ and $\Hom_{\Delta}(\prod_{i = 1}^n U_{\fp_i}^1/\overline{\cO_H^1}, V)$ are naturally isomorphic. Moreover, they have dimension $2$ over $\Q_p$.
\end{proposition}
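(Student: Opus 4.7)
The plan is to reduce $H^1(\Q_\Sigma/\Q,V)$ to a continuous Hom space over $H$ via inflation-restriction, then to use local arguments together with Lemma \ref{cft} to identify it with the stated $\Delta$-equivariant Hom, and finally to compute the dimension by Tate's global Euler--Poincar\'e formula.

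First, since $V = \ad^0(g)\otimes\Q_p$ is a $\Q_p$-vector space on which $G_H$ acts trivially (the representation factors through $\Delta = \Gal(H/\Q)$), and since the group cohomology of a finite group with coefficients in a $\Q_p$-vector space vanishes in positive degree, the inflation-restriction sequence collapses to
\[
H^1(\Q_\Sigma/\Q, V) \;\cong\; H^1(\Q_\Sigma/H, V)^\Delta \;=\; \Hom_{\mathrm{cts}}(X_\Sigma, V)^\Delta,
\]
where $X_\Sigma$ denotes the Galois group of the maximal abelian pro-$p$ extension of $H$ unramified outside $\Sigma$.

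Next, I would show that $\Delta$-equivariance forces any such homomorphism to kill inertia at every prime not above $p$. For a finite prime $v$ of $H$ lying over $\ell\in\Sigma\setminus\{p\}$, the representation $V$ is unramified at $v$, so $I_v$ acts trivially on $V$ and the $p$-part of the abelian tame quotient of $I_v$ is isomorphic to $\Z_p(1)$, on which $\Frob_v$ acts by $N(v)\geq 2$. Since $\varrho_{\ad^0(g)}(\Frob_v)$ has finite order while $N(v)$ is not a root of unity, any $\Delta_v$-equivariant map $\Z_p(1)\to V$ must vanish; the $\Delta$-action permuting the primes above $\ell$ then eliminates inertia at every prime not above $p$ simultaneously, and for $p$ odd the archimedean contribution is automatically trivial. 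Consequently such a homomorphism factors through the Galois group of the maximal abelian pro-$p$ extension of $H$ unramified outside $p$, and Lemma \ref{cft}, combined with the fact that torsion maps trivially to the $\Q_p$-vector space $V$, yields the identification with $\Hom_\Delta(\prod_{i=1}^n U^1_{\fp_i}/\overline{\cO_H^1}, V)$.

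For the dimension count, I would invoke Tate's global Euler--Poincar\'e formula. The vanishing $H^0(\Q_\Sigma/\Q, V) = V^{G_\Q} = 0$ uses irreducibility of $\varrho_g$: since $\End_\Delta(V_g) = L$, the adjoint $\ad^0(g)$ carries no nonzero $\Delta$-invariants. The vanishing $H^2(\Q_\Sigma/\Q, V) = 0$ follows from Tate duality together with the self-duality $V \cong V^*$ and the fact that $V(1)^{G_\Q} = 0$, since $V$ has finite image whereas the cyclotomic character has infinite order. Because $g$ is odd, complex conjugation acts on $V$ with eigenvalues $(+1,-1,-1)$, so $\dim V^{G_\R} = 1$, and the formula produces
\[
-\dim H^1(\Q_\Sigma/\Q, V) \;=\; \dim V^{G_\R} - \dim V \;=\; 1 - 3 \;=\; -2.
\]
The main technical hurdle will be the second step: one must carefully play the finite order of $\varrho_{\ad^0(g)}(\Frob_v)$ against the infinite order of the cyclotomic character in order to justify passing from $X_\Sigma$ to the smaller, class-field-theoretically described quotient appearing in Lemma \ref{cft}.
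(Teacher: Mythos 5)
The identification half of your argument is essentially the paper's: inflation--restriction, reduction to the maximal abelian pro-$p$ extension of $H$ unramified outside $\Sigma$, the observation that ramification away from $p$ contributes only torsion (the paper simply asserts that this extension is finite over the compositum $\tilde{H}$ of all $\Z_p$-extensions; your Frobenius-eigenvalue argument on the tame quotient is a correct way to justify that), and then Lemma \ref{cft}. The problem is in the dimension count.

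You claim $H^2(\Q_\Sigma/\Q,V)=0$ ``from Tate duality, the self-duality $V\cong V^*$ and $V(1)^{G_\Q}=0$''. That reasoning only disposes of part of the obstruction: local duality together with $V(1)^{G_{\Q_v}}=0$ kills the local terms $H^2(\Q_v,V)$, and $H^0(\Q_\Sigma/\Q,V^*(1))=0$ kills the cokernel term in the Poitou--Tate sequence, but what remains is $\Sha^2(V)\cong\Sha^1(V^*(1))^\vee=\Sha^1(V(1))^\vee$, and there is no formal duality argument for its vanishing. Concretely, via inflation--restriction and Kummer theory, $\Sha^1(V(1))$ is identified (up to finite ambiguity) with the $V$-isotypic part of the kernel of the semi-local map $\cO_H^\times\otimes\Q_p\to\prod_{i=1}^n U^1_{\fp_i,\Q_p}$; equivalently, $H^1(\Q_\Sigma/\Q,V)$ has dimension $2+\dim H^2(\Q_\Sigma/\Q,V)$, and $\dim H^2$ measures the relevant isotypic piece of the Leopoldt defect of $H$ at $p$. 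So your $H^2$-vanishing is exactly the nontrivial non-degeneracy statement that the paper's proof makes explicit in its multiplicity item (iii): the multiplicity of each constituent $\pi$ in $\overline{\cO_{H,\Q_p}^1}$ equals $\dim\pi^+$, which is Corollary 2.7 of \cite{GV} and rests on the Baker--Brumer theorem on $p$-adic logarithms, applicable here because every irreducible constituent of $\ad^0(g)$ satisfies $\dim\pi^+\le 1$. Without such an input your Euler-characteristic computation only gives $\dim H^1(\Q_\Sigma/\Q,V)\ge 2$. To repair the argument, either invoke Baker--Brumer (through \cite{GV}) to conclude $H^2(\Q_\Sigma/\Q,V)=0$, or compute the multiplicities of the constituents of $V$ in $\prod_{i=1}^n U^1_{\fp_i,\Q_p}/\overline{\cO_{H,\Q_p}^1}$ directly, as the paper does, which bypasses the Euler characteristic altogether.
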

\begin{proof}
	The inflation-restriction exact sequence gives the isomorphism
	\[
	H^1(\Q_\Sigma/\Q,V) \xrightarrow{\sim} H^1(\Q_\Sigma/H,V)^\Delta = \Hom_\Delta(\Gal(\Q_\Sigma/H),V)
	\]
	since the kernel and cokernel of this map are cohomology spaces of a finite group with values in a characteristic zero representation (this argument will be used from now on without explanation). Using that $V$ is a $\Q_p$-vector space we have
	\[
	\Hom_\Delta(\Gal(\Q_\Sigma/H),V) = \Hom_\Delta(\Gal(M/H), V), 
	\]
	where $M$ is the maximal abelian pro-$p$ extension contained in $\Q_\Sigma$. If we let $\tilde{H}$ be the compositum of all $\Z_p$-extensions of $H$, 
	then $H \subset \tilde{H} \subset M$ and $M/\tilde{H}$ is a finite extension. Therefore, using Lemma \ref{cft} for the last isomorphism
	\[
	\Hom_{\Delta}(\Gal(M/H), V) \simeq \Hom_{\Delta}(\Gal(\tilde{H}/H),V) \simeq \Hom_{\Delta}\left(\prod_{i = 1}^n U_{\fp_i}^1/\overline{\cO_H^1}, V\right).
	\]
	This concludes the proof of the first part of the proposition. To compute the dimension of $H^1(\Q_\Sigma/\Q, V)$ we will compute the dimension of $\Hom_{\Delta}\left(\prod_{i = 1}^n U_{\fp_i,\Q_p}^1/\overline{\cO_{H,\Q_p}^1}, V\right)$. This can be done by finding the multiplicity of the irreducible components of $V$ in $\prod_{i = 1}^n U_{\fp_i,\Q_p}^1/\overline{\cO_{H,\Q_p}^1}$ and applying Schur's lemma.
	
	We make some general observations before proceeding by case analysis according to the decomposition of $V$ into irreducible components. Let $\pi$ be an irreducible representation of $\Delta$ realizable over $L$. Denote by $\dim \pi$ the degree of this representation and by $\dim \pi^+$ the dimension of the subspace of $\pi$ fixed by complex conjugation. We have:
	\begin{enumerate}
		\item[(i)] The multiplicity of $\pi$ in $\prod_{i = 1}^n U_{\fp_i,\Q_p}^1$ is $\dim \pi$.
		\item[(ii)] If $\pi$ is not the trivial representation, by Minkowski's proof of Dirichlet's unit theorem, the multiplicity of $\pi$ in $\cO_H^1 \otimes L$ is $\dim \pi^+$.
		\item[(iii)] If $\dim \pi^+ = 0,1$, then the multiplicity of $\pi$ in $\overline{\cO_{H,\Q_p}^1}$ is $\dim \pi^+$ (it is clear when $\dim \pi^+ = 0$ and it is Corollary 2.7 of \cite{GV} when $\dim \pi^+ = 1$).
		\item[(iv)] Since $\varrho_g$ is an odd representation, $\varrho_{\ad^0(g)}(\tau)$ has eigenvalues $1, -1, -1$. 
	\end{enumerate}
	Now we do the case analysis:
	\begin{itemize}
		\item If $V$ is irreducible, i.e. $g$ is an exotic form, the multiplicity of $\ad^0(g)$ in $\prod_{i = 1}^3 U_{\fp_i, \Q_p}^1$ is $3$ and the multiplicity in $\overline{\cO_{H,\Q_p}^1}$ is $1$. Thus, the multiplicity of $\ad^0(g)$ in $\prod_{i = 1}^n U_{\fp_i,\Q_p}^1/\overline{\cO_{H,\Q_p}^1}$ is $2$.
		\item If $V$ decomposes as the sum of two irreducible representations, then there exists a quadratic field $K$ and a $1$-dimensional character $\psi: G_K \to \C^\times$ such that
		\[
		V = \chi_K \oplus \Ind_{G_K}^{G_\Q}(\psi),
		\]
		where $\chi_K: G_{\Q} \to \Q^\times$ is the character of order two attached to $K$. Proceeding in a similar way as above, we conclude that both $\chi_K$ and $\Ind_{G_K}^{G_\Q}(\psi)$ have multiplicity $1$ in $\prod_{i = 1}^n U_{\fp_i,\Q_p}^1/\overline{\cO_{H,\Q_p}^1}$ if $K$ is imaginary quadratic and $\chi_K$ has multiplicity $0$ and $\Ind_{G_K}^{G_\Q}(\psi)$ has multiplicity $2$ when $K$ is real quadratic.
		\item If $V$ decomposes as the sum of three $1$-dimensional representations there exists a real quadratic field $K_1$ and two imaginary quadratic fields $K_2$ and $K_3$ such that
		\[
		V = \chi_{K_1} \oplus \chi_{K_2} \oplus \chi_{K_3},
		\] 
		where $\chi_{K_i}$ is the character of order two attached to $K_i$ for every $i$. Then, the multiplicity of $\chi_{K_i}$ in $\prod_{i = 1}^n U_{\fp_i,\Q_p}^1$ is $0$ for $i = 1$ and $1$ for $i = 2,3$.
	\end{itemize}
\end{proof}
The next two propositions will be useful to study the map $\lambda$ apprearing in \eqref{lambda} and its image. The second one is essential for us in order to define Greenberg's $\mathcal L$-invariant and it is the only place where we use Hypothesis (B2). To simplify the notation denote

\[
W = \ker\left(H^1(\Q_p, V/F^+V) \to H^1(I_p,V/F^+V)\right) \subset H^1(\Q_p, V/F^+V).
\]

\begin{proposition}\label{dimRHS}
	We have $\dim_{\Q_p} H^1(\Q_p,V/F^+V) = 3$ and $\dim_{\Q_p} W = 1$. Moreover, $W \subset H^1(\Q_p,\Q_p)$ as subspaces of $H^1(\Q_p,V/F^+V)$.
\end{proposition}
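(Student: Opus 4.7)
The plan is to reduce everything to direct-sum computations on the two characters appearing in $V/F^+V$. By the canonical decomposition \eqref{decVG_Qp}, $V/F^+V = \Q_p \oplus \Q_p^{\alpha/\beta}$ as $G_{\Q_p}$-modules, so
\[
H^1(\Q_p, V/F^+V) \;=\; H^1(\Q_p, \Q_p) \,\oplus\, H^1(\Q_p, \Q_p^{\alpha/\beta}),
\]
and it suffices to treat each summand together with its unramified part.

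First, I would pin down the two dimensions by combining the local Euler characteristic formula with local Tate duality. For the trivial character, $H^0(\Q_p,\Q_p)$ is one-dimensional and $H^2(\Q_p,\Q_p) \simeq H^0(\Q_p,\Q_p(1))^\vee = 0$ since the cyclotomic character is ramified; the Euler characteristic $-1$ then forces $\dim H^1(\Q_p,\Q_p) = 2$. For the unramified character $\Q_p^{\alpha/\beta}$, Hypothesis (B1) gives $\alpha/\beta \neq 1$ so $H^0 = 0$; a parallel Tate duality argument, using that $\Q_p^{\beta/\alpha}(1)$ is ramified and therefore has no Galois invariants, yields $H^2 = 0$, whence $\dim H^1(\Q_p,\Q_p^{\alpha/\beta}) = 1$. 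Summing gives $\dim H^1(\Q_p, V/F^+V) = 3$.

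Next I would compute $W$ via inflation--restriction. Because both summands of $V/F^+V$ are unramified, $(V/F^+V)^{I_p} = V/F^+V$ and
\[
W \;=\; H^1\bigl(G_{\Q_p}/I_p,\; V/F^+V\bigr).
\]
Identifying $G_{\Q_p}/I_p \simeq \hat{\Z}$ with topological generator $\Frob_p$, and using $H^1(\hat{\Z}, N) = N/(\Frob_p - 1)N$ for a finite-dimensional $\Q_p$-vector space $N$, I split along $V/F^+V = \Q_p \oplus \Q_p^{\alpha/\beta}$: on the second summand $\Frob_p - 1$ acts as the nonzero scalar $\alpha/\beta - 1$, hence is invertible and contributes $0$; on the first it vanishes and contributes a line. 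This simultaneously shows $\dim W = 1$ and places $W$ inside the summand $H^1(\Q_p, \Q_p) \hookrightarrow H^1(\Q_p, V/F^+V)$, which is the final assertion.

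The computations themselves are entirely routine once \eqref{decVG_Qp} is in force. The main conceptual point to keep in mind is that this decomposition is canonical even in the degenerate case $\alpha/\beta = \beta/\alpha$, so that the induced splittings of both $H^1(\Q_p, V/F^+V)$ and $W$ into the two lines are well defined; Hypothesis (B2) does not appear necessary for this proposition, and should rather intervene in the companion result controlling the image of the global-to-local map $\lambda$.
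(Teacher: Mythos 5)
Your argument is correct, but it follows a genuinely different route from the paper. You compute the dimension of $H^1(\Q_p,V/F^+V)$ by splitting along \eqref{decVG_Qp} and invoking the local Euler characteristic formula together with Tate duality ($H^2(\Q_p,\Q_p)\simeq H^0(\Q_p,\Q_p(1))^\vee=0$, and similarly for the twist $\Q_p^{\beta/\alpha}(1)$, which is ramified hence has no invariants), and you identify $W$ with the unramified cohomology $H^1(G_{\Q_p}/I_p,V/F^+V)\simeq (V/F^+V)/(\Frob_p-1)$, where Hypothesis (B1) kills the $\alpha/\beta$-line; this is where $\dim W=1$ and $W\subset H^1(\Q_p,\Q_p)$ come from. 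The paper instead exploits that $V/F^+V$ is an Artin module: after inflation--restriction it applies local class field theory to get the explicit isomorphism $\mu\colon H^1(\Q_p,V/F^+V)\xrightarrow{\sim}\Hom_{\Delta_{\fp_1}}(p^\Z\times\cO_{H_{\fp_1}}^\times,V/F^+V)$ of \eqref{rescftiso}, and then counts multiplicities of the two characters of $\Delta_{\fp_1}$ in the trivial-plus-regular representation $(p^\Z\times\cO_{H_{\fp_1}}^\times)\hat\otimes\Q_p$; under $\mu$, the subspace $W$ becomes $\Hom_{\Delta_{\fp_1}}(p^\Z,\Q_p)$. Your route is shorter and more self-contained (it needs no class field theory, only standard local Galois cohomology), and your observation that (B2) plays no role here agrees with the paper. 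What the paper's approach buys is the explicit map $\mu$, which is not just a device for this dimension count: it is reused later, notably in the proofs of Propositions \ref{propisomorphism} and \ref{f(w)=0KQIpinert} and Lemma \ref{localconditionCKQRpinert}, so if one adopted your proof one would still need to construct $\mu$ separately for those arguments.
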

\begin{proof}
	We will use that $\mathrm{ad}^0(g)$ is an Artin representation. Since the action of $G_\Q$ on $V$ factors through $\Delta = \Gal(H/\Q)$, the action of $G_{\Q_p}$ on $V$ factors through the decomposition subgroup $\Delta_{\fp_1}$. Using the inflation-restriction exact sequence, we deduce the following isomorphism
	\[
	H^1(\Q_p, V/F^+V) \xrightarrow{\sim} \Hom_{\Delta_{\fp_1}}(G_{H_{\fp_1}}, V/F^+V).
	\]
	We use local class field theory to rewrite it. Denote by $\cO_{H_{\fp_1}}^\times$ the group of local units of $H_{\fp_1}$. Since $p$ is unramified in $H$, we have that $p$ is a uniformizer of $H_{\fp_1}$. Therefore $H_{\fp_1}^\times = p^\Z \times \cO_{H_{\fp_1}}^\times$. Using the local reciprocity map, we get a natural isomorphism $\Hom_{\Delta_{\fp_1}}(G_{H_{\fp_1}}, V/F^+V) \simeq \Hom_{\Delta_{\fp_1}}(p^\Z \times \cO_{H_{\fp_1}}^\times, V/F^+V)$. Composing these two isomorphisms we have
	\begin{equation}\label{rescftiso}
	\mu: H^1(\Q_p, V/F^+V) \xrightarrow{\sim} \Hom_{\Delta_{\fp_1}}(p^\Z \times \cO_{H_{\fp_1}}^\times, V/F^+V).
	\end{equation}
	In order to prove the proposition we will study the space $\Hom_{\Delta_{\fp_1}}(p^\Z \times \cO_{H_{\fp_1}}^\times, V/F^+V)$. Recall that $\Delta_{\fp_1}$ acts on $U_{\fp_1,\Q_p}^1$ as the regular representation. From here, we deduce that $\left(p^\Z \times \cO_{H_{\fp_1}}^\times\right) \hat{\otimes} \Q_p$ is isomorphic to the direct sum of the trivial representation and the regular representation of $\Delta_{\fp_1}$, where we are considering the completed tensor product over $\Z$. On the other hand, $V/F^+V$ is the direct sum of the trivial representation and a $1$-dimensional non-trivial representation (we are using Hypothesis (B1)). By computing the multiplicity of these two irreducible representations in $\left(p^\Z \times \cO_{H_{\fp_1}}^\times\right)  \hat{\otimes} \Q_p$ we deduce the first equality of the proposition. Now, note that inflation-restriction and class field theory give the injection $H^1(I_p,V/F^+V) \hookrightarrow \Hom(\cO_{H_{\fp_1}}^\times, V/F^+V)$. From here, $f \in W$ if and only if $\mu(f)(\cO_{H_{\fp_1}}^\times) = 0$. Hence, $\mu(W) = \Hom_{\Delta_{\fp_1}}(p^\Z, \Q_p) \subset \Hom_{\Delta_{\fp_1}}(p^\Z \times \cO_{H_{\fp_1}}^\times , \Q_p)$ and the last two statements follow.
	
\end{proof}

\begin{proposition}\label{propisomorphism}
	The natural map
	\begin{equation}\label{isomorphism}
	H^1(\Q_\Sigma/\Q,V) \to H^1(\Q_p,V/F^+V)/W
	\end{equation}
	is an isomorphism. In particular, the map $\lambda$ in \eqref{lambda} is injective.
\end{proposition}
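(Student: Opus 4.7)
By Propositions \ref{dimLHS} and \ref{dimRHS}, both $H^1(\Q_\Sigma/\Q,V)$ and $H^1(\Q_p,V/F^+V)/W$ are two-dimensional $\Q_p$-vector spaces. It therefore suffices to prove that the map \eqref{isomorphism} is injective; the injectivity of $\lambda$ in \eqref{lambda} will then be automatic, since $\lambda$ factors as \eqref{isomorphism} composed with the quotient by $W$. The plan is to translate the kernel condition of \eqref{isomorphism} into an explicit statement about the $\Delta$-equivariant Hom description of $H^1(\Q_\Sigma/\Q,V)$ supplied by Proposition \ref{dimLHS}, and then to show that this condition forces the class to vanish.

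Combining the two isomorphisms just cited, a class $c \in H^1(\Q_\Sigma/\Q,V)$ corresponds to a $\Delta$-equivariant homomorphism
\[
\phi\colon \prod_{i=1}^{n}U^1_{\fp_i,\Q_p}\big/\overline{\cO_{H,\Q_p}^1} \longrightarrow V,
\]
and, under the isomorphism $\mu$ of Proposition \ref{dimRHS}, the image of $\res_p(c)$ in $H^1(\Q_p,V/F^+V)$ corresponds to a $\Delta_{\fp_1}$-equivariant map $p^\Z \times \cO_{H_{\fp_1}}^\times \to V/F^+V$ whose restriction to the unit part agrees with $\phi|_{U^1_{\fp_1,\Q_p}}$ composed with $V \twoheadrightarrow V/F^+V$. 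Since $W$ is identified by $\mu$ with $\Hom_{\Delta_{\fp_1}}(p^\Z,\Q_p)$, the class $c$ belongs to the kernel of \eqref{isomorphism} precisely when $\phi(U^1_{\fp_1,\Q_p}) \subseteq F^+V$.

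The next step is to show that no nonzero $\phi$ meets this constraint. Since $\prod_i U^1_{\fp_i,\Q_p} \simeq \Ind^{\Delta}_{\Delta_{\fp_1}}U^1_{\fp_1,\Q_p}$ as $\Delta$-modules, by Frobenius reciprocity the restriction $\phi|_{U^1_{\fp_1,\Q_p}}$ determines $\phi$ on the entire induced module, and equivariance propagates the constraint to $\phi(U^1_{\fp_j,\Q_p}) \subseteq \sigma_j(F^+V)$ whenever $\sigma_j \in \Delta$ satisfies $\sigma_j(\fp_1)=\fp_j$. Combined with the requirement that $\phi$ annihilate the image $\overline{\cO_{H,\Q_p}^1}$ of the global units, this becomes a finite linear system whose triviality the plan is to verify case by case, following the decomposition of $V$ into irreducible $\Delta$-constituents exactly as in the proof of Proposition \ref{dimLHS}. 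Hypothesis (B2) enters precisely to rule out the single decomposition---namely induction from a real quadratic field in which $p$ splits---for which a nontrivial solution would otherwise exist.

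The main obstacle is expected to be the exotic case, in which $V$ is $\Delta$-irreducible. By Schur's lemma the space of $\Delta$-equivariant $\phi$'s has dimension two, and one must verify that the local constraint $\phi(U^1_{\fp_1,\Q_p}) \subseteq F^+V$ combined with vanishing on $\overline{\cO_{H,\Q_p}^1}$ cuts this space down to zero. This reduces to a non-degeneracy statement for an explicit matrix encoding the $\sigma_j$-conjugates of the line $F^+V$ inside $V$ and the action of $\Delta_{\fp_1}$ on $U^1_{\fp_1,\Q_p}$ via the $\Frob_p$-eigenvalues $1,\alpha/\beta,\beta/\alpha$ on $V$ (whose canonical character-eigenspace decomposition is guaranteed by (B1)). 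Once the non-degeneracy is in hand, injectivity and hence the isomorphism assertion follow by the dimension count.
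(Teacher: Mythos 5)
Your reduction to injectivity via the dimension count from Propositions \ref{dimLHS} and \ref{dimRHS} matches the paper, and your translation of the kernel condition into the constraint $\phi(U^1_{\fp_1,\Q_p}) \subseteq F^+V$ on the $\Delta$-equivariant homomorphism $\phi$ is essentially correct (modulo spelling out local--global compatibility of the Artin maps across the two Hom descriptions, which the paper does in the commutative diagram of Proposition \ref{f(w)=0KQIpinert}). Your observation about the role of Hypothesis (B2) in excluding the real-quadratic split case is also on target.

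However, there is a genuine gap at the step you defer. You reduce the problem to a ``non-degeneracy statement for an explicit matrix'' and announce a plan to ``verify case by case'', but you never carry out this verification, and in fact it is not a formal linear-algebra statement that could be checked by inspecting the representation theory alone. After passing through Frobenius reciprocity, the constraint subspace (homomorphisms landing in $F^+V$ on the $\fp_1$-coordinate) is canonical, but the subspace of maps annihilating $\overline{\cO_{H,\Q_p}^1}$ depends on the $p$-adic position of the image of the global units inside $\prod_i U^1_{\fp_i,\Q_p}$. Showing that these two subspaces meet trivially is a statement about $p$-adic regulators of global units: in the exotic case it amounts to the non-vanishing of a $2\times 2$ determinant of $p$-adic logarithms of Galois conjugates of a unit, which is not accessible by elementary means. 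The paper circumvents this by invoking Theorem 2.2 and Lemma 2.3 of Bella\"iche--Dimitrov \cite{BD}, proved via the geometry of the eigencurve at weight one points; this is precisely the arithmetic input encoding the required finiteness of the Selmer group (cf.\ Remark \ref{interpretationSelmer}). The paper then reduces the remaining piece (injectivity of $H^1(\Q_p,\Q_p^{\alpha/\beta}) \to H^1(I_p,\Q_p^{\alpha/\beta})$) to a short local class field theory computation exploiting $\alpha/\beta \neq 1$. To complete your proposal you would either need to cite the Bella\"iche--Dimitrov result at the non-degeneracy step, or reprove a result of that depth directly; as written, the proof does not go through.
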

\begin{proof}
	By Proposition \ref{dimLHS} and Proposition \ref{dimRHS}, $H^1(\Q_\Sigma/\Q,V)$ has the same dimension than $H^1(\Q_p, V/F^+V)/W$. Therefore, it is enough to see that \eqref{isomorphism} is injective, i.e. we need to prove that the kernel of the natural map
	\begin{equation}\label{conditionSelmer}
	H^1(\Q_\Sigma/\Q,V) \to H^1(I_p, V/F^+V) = H^1(I_p, \Q_p) \oplus H^1(I_p, \Q_p^{\alpha/\beta})
	\end{equation}
	is trivial (where we used \eqref{decVG_Qp} to decompose $V/F^+V$). For that we will use Theorem 2.2 of \cite{BD}. By Hypothesis (B2) and Lemma 2.3 of \cite{BD}, Theorem 2.2 of \cite{BD} (and the comment below the statement of this theorem) imply that the natural map 
	\[
	H^1(\Q_\Sigma/\Q,V) \to H^1(I_p,\Q_p) \oplus H^1(\Q_p, \Q_p^{\alpha/\beta})
	\]
	is injective. Therefore, in order to conclude it is enough to see that the restriction map
	\[
	H^1(\Q_p, \Q_p^{\alpha/\beta}) \to H^1(I_p, \Q_p^{\alpha/\beta})
	\]
	is injective as well. For that we proceed as in the previous proof: by local class field theory and the inflation-restriction exact sequence we get the following commutative diagram
	\[
	\begin{tikzcd}
	H^1(\Q_p,\Q_p^{\alpha/\beta}) \arrow[r,""] \arrow[d, " "]
	& H^1(I_p,\Q_p^{\alpha/\beta}) \arrow[d, ""] \\
	\Hom_{\Delta_{\fp_1}}(p^\Z \times \cO_{H_{\fp_1}}^\times, \Q_p^{\alpha/\beta}) \arrow[r, ""]
	& \Hom(\cO_{H_{\fp_1}}^\times, \Q_p^{\alpha/\beta}),
	\end{tikzcd}
	\]
	where the left vertical map is an isomorphism and the horizontal maps are the usual restrictions. Since $\alpha/\beta \neq 1$, we have $\Hom_{\Delta_{\fp_1}}(p^\Z \times \cO_{H_{\fp_1}}^\times, \Q_p^{\alpha/\beta}) = \Hom_{\Delta_{\fp_1}}(\cO_{H_{\fp_1}}^\times, \Q_p^{\alpha/\beta})$ so the injectivity of the top horizontal map follows.
	
\end{proof}

\begin{remark}\label{interpretationSelmer}
	Proving that \eqref{conditionSelmer} is injective is essentially proving that a Selmer group attached to $p$ and to the filtration $V \supset F^+V$ is finite. A definition of this Selmer group can be found in \cite{GV}.
\end{remark}

We use this isomorphism to identify a $1$-dimensional subspace of $H^1(\Q_\Sigma/\Q,V)$ such that its image by $\lambda$ gives the desired subspace of $H^1(\Q_p,\Q_p) = \Hom(G_{\Q_p}, \Q_p)$. Then, we state a property of it that will be essential in order to define $\mathcal L^\mathrm{Gr}(\ad^0(g_\alpha))$.

\begin{definition}\label{defC}
Define $C \subset H^1(\Q_\Sigma/\Q,V)$ to be the preimage of $H^1(\Q_p,\Q_p)/W$ by the isomorphism \eqref{isomorphism}.
\end{definition}
 
\begin{corollary}\label{lambda(C)}
	The image of $C \subset H^1(\Q_\Sigma/\Q, V)$ by the map 
	\[
	\lambda: H^1(\Q_\Sigma/\Q,V) \to H^1(\Q_p, V/F^+V)
	\]
	satisfies
	\begin{enumerate}
		\item $\lambda(C)$ is a $1$-dimensional subspace of $H^1(\Q_p, \Q_p) = \Hom(G_{\Q_p},\Q_p)$,
		\item $\lambda(C) \cap W = 0$.
	\end{enumerate}
\end{corollary}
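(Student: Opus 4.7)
The plan is to deduce the corollary directly from Proposition \ref{propisomorphism} and Proposition \ref{dimRHS} by unwinding the definition of $C$. Let $\pi: H^1(\Q_p, V/F^+V) \to H^1(\Q_p, V/F^+V)/W$ denote the canonical projection; then the isomorphism of Proposition \ref{propisomorphism}, which I will call $\phi$, factors as $\phi = \pi \circ \lambda$, and by Definition \ref{defC} one has $C = \phi^{-1}(H^1(\Q_p,\Q_p)/W)$.

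For part (1), for every $c \in C$ one has $\pi(\lambda(c)) \in H^1(\Q_p,\Q_p)/W$, which means $\lambda(c) \in H^1(\Q_p,\Q_p) + W$; since Proposition \ref{dimRHS} asserts $W \subseteq H^1(\Q_p,\Q_p)$, this sum collapses to $H^1(\Q_p,\Q_p)$ and hence $\lambda(C) \subseteq H^1(\Q_p,\Q_p)$. For the dimension, local class field theory gives $\dim_{\Q_p} H^1(\Q_p,\Q_p) = 2$, and combined with $\dim_{\Q_p} W = 1$ from Proposition \ref{dimRHS} this yields $\dim_{\Q_p} H^1(\Q_p,\Q_p)/W = 1$. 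Since $\phi$ is an isomorphism, $\dim_{\Q_p} C = 1$, and injectivity of $\lambda$ (also part of Proposition \ref{propisomorphism}) gives $\dim_{\Q_p} \lambda(C) = 1$.

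For part (2), if $x \in \lambda(C) \cap W$, write $x = \lambda(c)$ for some $c \in C$; then $\phi(c) = \pi(\lambda(c)) = \pi(x) = 0$ because $x \in W = \ker \pi$. Since $\phi$ is an isomorphism, $c = 0$, and hence $x = \lambda(c) = 0$. The substantive content was already packed into Propositions \ref{propisomorphism} and \ref{dimRHS}; what remains is a formal diagram chase, so I do not anticipate a real obstacle in writing out the argument.
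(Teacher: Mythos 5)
Your proof is correct and is precisely the argument the paper leaves implicit (no proof is printed for this corollary in the source). The factorization $\phi = \pi\circ\lambda$, the observation that $W\subseteq H^1(\Q_p,\Q_p)$ collapses $H^1(\Q_p,\Q_p)+W$ to $H^1(\Q_p,\Q_p)$, the dimension count via the isomorphism $\phi$ and injectivity of $\lambda$, and the triviality of $\lambda(C)\cap W$ from $W=\ker\pi$ are exactly the routine consequences of Propositions \ref{dimRHS} and \ref{propisomorphism} that the authors intend.
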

\begin{remark}\label{lambda(C)=intersection}
	Note that we can also describe this space as $\lambda(C) = \lambda(H^1(\Q_\Sigma/\Q,V)) \cap H^1(\Q_p, \Q_p)$.
\end{remark}

\subsection{Definition of $\mathcal L^\mathrm{Gr}(\ad^0(g_\alpha))$ in terms of universal norms}

\begin{definition}\label{Zpextn}
	Let $H_\infty$ be the unique $\Z_p$-extension of $\Q_p$ such that $\lambda(C) = \Hom(\Gal(H_\infty/\Q_p),\Q_p)$. By Corollary \ref{lambda(C)} (2), $H_\infty$ is not the unramified $\Z_p$-extension of $\Q_p$.
\end{definition}
Let $H_n$ be the subfield of $H_\infty$ of degree $p^n$ over $\Q_p$. The definition of $\mathcal L^\mathrm{Gr}(\ad^0(g_\alpha))$ will be done in terms of elements of the group $\mathrm{UnivNorm}(H_\infty) := \cap \Norm_{H_n/\Q_p}(H_n^\times)$, the so called group of universal norms of $H_\infty$. The following application of local class field theory tells us the structure of this group. Here, and from now on, denote by $\mu_{p-1}$ the group of $(p-1)$th roots of unity.
\begin{lemma}\label{structUnivNorm}
	The group $\mathrm{UnivNorm}(H_\infty)$ has the following structure
	\[
	\mathrm{UnivNorm}(H_\infty) = \mu_{p-1} \times r^\Z,
	\]
	where $r = p^k s$ for some $k \geq 1$ and $s \in 1+ p\Z_p$.
\end{lemma}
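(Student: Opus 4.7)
The plan is to translate the lemma into a computation about a single continuous homomorphism via local class field theory, and then read off the structure of its kernel from the decomposition $\Q_p^\times = p^\Z \times \mu_{p-1} \times (1+p\Z_p)$.

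First I would combine local class field theory at each finite layer of $H_\infty/\Q_p$: the compatible isomorphisms $\Q_p^\times/\Norm_{H_n/\Q_p}(H_n^\times) \xrightarrow{\sim} \Gal(H_n/\Q_p)$ assemble in the inverse limit into a continuous homomorphism $\phi: \Q_p^\times \to \Gal(H_\infty/\Q_p) \cong \Z_p$ whose kernel is exactly $\mathrm{UnivNorm}(H_\infty)$. Since $\Z_p$ is torsion free, $\phi$ annihilates $\mu_{p-1}$, so $\mu_{p-1} \subseteq \mathrm{UnivNorm}(H_\infty)$ and it remains to describe $\ker\phi$ modulo $\mu_{p-1}$.

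Next I would fix the topological generator $\gamma = 1+p$ of $1+p\Z_p$ and set $a := \phi(p)$, $b := \phi(\gamma) \in \Z_p$, so that continuity forces $\phi(p^n \zeta \gamma^u) = na + ub$ for $(n, \zeta, u) \in \Z \times \mu_{p-1} \times \Z_p$. The place where Corollary \ref{lambda(C)}(2) enters is the identification of the hypothesis from Definition \ref{Zpextn} that $H_\infty$ is not the unramified $\Z_p$-extension of $\Q_p$ with the algebraic condition $b \neq 0$; this is valid because a $\Z_p$-extension of $\Q_p$ is unramified precisely when its associated continuous homomorphism $\phi$ kills all local units.

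With $b \neq 0$, the equation $na + ub = 0$ reduces to $u = -na/b$ subject to $u \in \Z_p$, and a short case analysis on the valuations of $a$ and $b$ finishes the argument. If $v_p(a) \geq v_p(b)$, the constraint on $u$ is automatic, so $\ker\phi$ modulo $\mu_{p-1}$ is infinite cyclic, generated by $(1, -a/b) \in \Z \times \Z_p$, giving $r = p \cdot \gamma^{-a/b}$ with $k = 1$. Otherwise, setting $d := v_p(b) - v_p(a) > 0$, the condition $u \in \Z_p$ forces $n \in p^d \Z$, and the kernel is cyclic on $(p^d, -p^d a/b)$, yielding $r = p^{p^d} \gamma^{-p^d a/b}$ with $k = p^d \geq p$. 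In both cases $s \in 1+p\Z_p$ and $k \geq 1$; moreover, since $v_p(r^n) = nk$ takes distinct values for different $n$, $r^\Z$ is discrete in $\Q_p^\times$ and meets $\mu_{p-1}$ trivially, so $\mathrm{UnivNorm}(H_\infty) = \mu_{p-1} \times r^\Z$ as claimed. The argument is essentially formal once local class field theory is in place; the only non-tautological ingredient is the translation of the condition that $H_\infty$ is not unramified into $b \neq 0$.
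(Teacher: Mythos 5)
Your proposal is correct and follows the same route as the paper's own (very terse) proof: identify $\mathrm{UnivNorm}(H_\infty)$ as the kernel of the local Artin map composed with the projection to $\Gal(H_\infty/\Q_p)\simeq\Z_p$, decompose $\Q_p^\times\simeq\mu_{p-1}\times(1+p\Z_p)\times p^\Z$, and use that $H_\infty$ is not unramified. You simply spell out the computation the paper leaves implicit (the reduction to $na+ub=0$ and the valuation case analysis), and your observation that $k$ comes out to be $1$ or $p^d$ in the two cases even matches the paper's subsequent remark that $k$ is always a power of $p$.
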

\begin{proof}
	Note that $\mathrm{UnivNorm}(H_\infty)$ is the kernel of the continuous morphism
	\[
	\Q_p^\times \rightarrow \Gal(\Q_p^{\mathrm{ab}}/\Q_p) \twoheadrightarrow \Gal(H_\infty/\Q_p), 
	\]
	where the first map is the local Artin map and the second one is the natural projection. Then, we have to use that $\Q_p^\times \simeq \mu_{p-1} \times \Z_p \times p^\Z$, $\Gal(H_\infty/\Q_p) \simeq \Z_p$ and the fact that $H_\infty$ is not the unramified extension of $\Q_p$.
	
\end{proof}
\begin{remark}
	In fact, it can be seen that $k$ as above is equal to $p^t$ for some $t \geq 0$ as observed in Example 4 of \cite{Gr2} but we will not need this.
\end{remark}

And we can now define Greenberg's $\mathcal L$-invariant.
\begin{definition}\label{defLGr}
	Let $q \in \mathrm{UnivNorm}(H_\infty)$ such that $q \not \in \mu_{p-1}$. Define 
	\[
	\mathcal L^\mathrm{Gr}(\ad^0(g_\alpha)) = \frac{\log_p(q)}{\ord_p(q)},
	\]
	where $\log_p$ is the usual extension of the $p$-adic logarithm satisfying $\log_p(p) = 0$.
\end{definition}

\begin{remark}
	By Lemma \ref{structUnivNorm} we see that $\mathcal L^\mathrm{Gr}(\ad^0(g_\alpha))$ is well defined: for any $q$ as in the definition we have $\ord_p(q) \neq 0$ and the definition does not depend on the choice of $q$.
\end{remark}

\begin{remark}
	We have that $\mathcal L^{\mathrm{Gr}}(\ad^0(g_\alpha)) = 0$ if and only if $\mathrm{UnivNorm}(H_\infty)= \mu_{p-1} \times p^\Z$. In other words, $\mathcal L^{\mathrm{Gr}}(\ad^0(g_\alpha)) = 0$ if and only if $H_\infty$ is the cyclotomic $\Z_p$-extension of $\Q_p$.  
\end{remark}

\begin{remark}\label{subspaceofad}
	Instead of working with $V$, in order to define $\mathcal L^{\mathrm{Gr}}(\ad^0(g_\alpha))$ it would be enough to work with the irreducible subspace of $V$ containing the line where $G_{\Q_p}$ acts trivially. In other words, it would be enough to prove an isomorphism analogous to \eqref{isomorphism} replacing $V$ by this irreducible subspace (and therefore changing $F^+V$ accordingly). For the cases when $g$ is the theta series of a character of $K$ quadratic imaginary or $g$ is an exotic form such that $(\alpha/\beta)^2 \neq 1$ this isomorphism can also be proven using Theorem 1 of \cite{GV} and a statement analogous to our Proposition \ref{dimRHS}. In \cite[Theorem 1]{GV} it is shown that the Selmer group attached to this subrepresentation and $p$ is finite (as we mentioned in Remark \ref{interpretationSelmer}).
\end{remark}

\begin{remark} Let $g$ be the theta series of a character of a real quadratic field $K$ where $p$ splits. Recall that this case was excluded by Hypothesis (B2). The only part where we had to use this hypothesis was to prove the injectivity of \eqref{isomorphism}. In fact, as it is proven in Theorem 2 of \cite{BD}, when $g$ is the theta series of a character of $K$ the map \eqref{isomorphism} is never injective. In order to show this using our previous results, consider the setting of this section and recall that in this case
	\[
	V = \chi_K \oplus \Ind_{G_K}^{G_\Q}(\psi),
	\]
	where $\chi_K: G_{\Q} \to \Q^\times$ is the character of order two attached to $K$ and $\psi$ is a finite order character of $K$. For the sake of contradiction suppose that Proposition \ref{propisomorphism} holds in this case, i.e. suppose that \eqref{isomorphism} is injective. Then, following the discussion done in this section there exists $C \subset H^1(\Q_\Sigma/\Q,V)$ satisfying the conditions stated in Corollary \ref{lambda(C)}. As it is explained in the proof of Proposition \ref{dimLHS}, since $K$ is real quadratic, $H^1(\Q_\Sigma/\Q,V) = H^1(\Q_\Sigma/\Q, \Ind_{G_K}^{G_\Q}(\psi))$ (and $H^1(\Q_\Sigma/\Q, \chi_K) = 0$). On the other hand, the line of $V$ where $G_{\Q_p}$ acts trivially is the line of $V$ where $G_\Q$ acts by $\chi_K$ because $p$ splits in $K$. This is a contradiction with Corollary \ref{lambda(C)} (1), since it says that $\lambda(C)$ is a $1$-dimensional subspace of $H^1(\Q_p,\Q_p)$ but $\Ind_{G_K}^{G_{\Q}}(\psi)$ contains no line where $G_{\Q_p}$-acts trivially.
\end{remark}

\subsection{Towards the explicit computation of $\mathcal L^{\mathrm{Gr}}(\ad^0(g_\alpha))$} We write a universal norm of $H_\infty$ as a product of a fixed $p$-unit of $H$ and a principal unit of $\Z_p$, which is unknown. Then, we use class field theory to give a condition on this unknown in terms of the $\Delta$-representation space $\prod_{i = 1}^n U_{\fp_i,\Q_p}^1$. This will be essential to compute Greenberg's $\mathcal L$-invariant in the next sections. In order to define the fixed $p$-unit, we introduce the following notation. Let $H_1$ be the subfield of $H$ fixed by $\Delta_{\fp_1}$. Let $h$ be its the class number and let $\fp_1'$ be the prime of $H_1$ below $\fp_1$. Note that the completion of $H_1$ at the prime $\fp_1'$ is $\Q_p$. Recall that by Lemma \ref{structUnivNorm}, $\mathrm{UnivNorm}(H_\infty) = \mu_{p-1} \times r^\Z$, where $r = p^k s$ for some $k \geq 1$ and $s \in 1+ p\Z_p$. 

\begin{definition}\label{defpi}
	Following the notation above, fix $\pi' \in H_1$ such that $(\pi') = (\fp_1')^{kh}$. Let $\pi = (\pi')^N$, where $N = \Norm_{H/\Q}(\fp_1)-1$. Note that $\pi$ is congruent to $1$ modulo $\fp_i$ for all $i = 2, \dots, n$.
\end{definition}


\begin{proposition}\label{piepsilona}
	There exists a unique $x \in 1+p\Z_p$ such that $\pi x \in \mathrm{UnivNorm}(H_\infty)$.
\end{proposition}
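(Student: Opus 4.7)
The plan is to reduce the claim to a componentwise matching in the canonical decomposition $\Q_p^\times = p^\Z \times \mu_{p-1} \times (1+p\Z_p)$. Since $H_1$ completes to $\Q_p$ at $\fp_1'$ (as $H_1$ is the decomposition field and $p$ is unramified in $H$), the element $\pi \in H_1$ sits inside $\Q_p^\times$, and so does $\pi x$. By Lemma \ref{structUnivNorm}, every element of $\mathrm{UnivNorm}(H_\infty)$ is uniquely of the form $\zeta r^m = \zeta \, p^{km} s^m$ with $\zeta \in \mu_{p-1}$, $m \in \Z$ and $s \in 1+p\Z_p$, so the equation $\pi x = \zeta r^m$ splits cleanly into three separate equations, one in each factor of $\Q_p^\times$.

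First I would compute $v_p(\pi)$. Since $\fp_1'$ is unramified of residue degree one over $p$, the uniformizer $p$ generates $\fp_1'$ in the completion, so $v_p(\pi') = kh$ and hence $v_p(\pi) = khN$. Matching $p$-adic valuations in $\pi x = \zeta p^{km} s^m$ then forces $km = khN$, and since $k \geq 1$ the exponent $m = hN$ is uniquely determined. Next I would write $\pi = p^{khN}\cdot \zeta_\pi\cdot \pi_1$ with $\zeta_\pi \in \mu_{p-1}$ and $\pi_1 \in 1+p\Z_p$ using the canonical decomposition of $\Z_p^\times$. Comparing Teichmüller components in $\pi x = \zeta\, p^{khN} s^{hN}$, and using $x \in 1+p\Z_p$ and $s^{hN} \in 1+p\Z_p$, one sees that $\zeta$ is forced to equal $\zeta_\pi$. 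Finally, matching principal-unit components yields the single equation $x \cdot \pi_1 = s^{hN}$ in $1+p\Z_p$, which admits the unique solution $x = s^{hN}/\pi_1 \in 1+p\Z_p$.

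This simultaneously establishes existence (the triple $(m, \zeta, x) = (hN, \zeta_\pi, s^{hN}/\pi_1)$ verifies $\pi x = \zeta_\pi r^{hN}\in \mathrm{UnivNorm}(H_\infty)$) and uniqueness (each of the three matchings above is uniqueness-forced in its respective factor of $\Q_p^\times$). I do not expect any genuine obstacle in this proof: once the structure theorem for $\mathrm{UnivNorm}(H_\infty)$ of Lemma \ref{structUnivNorm} is in hand, the argument is just bookkeeping in $\Q_p^\times$. The only subtle point worth double-checking is that $v_p(\pi) = khN$ with $k$ the exact integer appearing in the presentation $r = p^k s$; this is why the definition of $\pi'$ uses precisely the exponent $kh$ rather than an arbitrary multiple of $h$.
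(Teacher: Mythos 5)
Your proof is correct and follows essentially the same route as the paper's: both arguments observe that $v_p(\pi)=khN$, so that $r^{hN}/\pi \in \Z_p^\times$, and then split off the Teichm\"uller and principal-unit parts (the paper simply absorbs the root of unity into $\mathrm{UnivNorm}(H_\infty)$ rather than matching it against the Teichm\"uller component of $\pi$), with uniqueness coming from the structure of $\mathrm{UnivNorm}(H_\infty)$ in Lemma \ref{structUnivNorm}. Your version is just a more explicit componentwise bookkeeping of the same argument.
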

\begin{proof}
	Viewing $\pi \in \Q_p^\times$ we have that $p^{khN}/\pi \in \Z_p^\times$, where $N = \Norm_{H/\Q}(\fp_1)-1$. Hence, there exists $x \in 1+ p\Z_p$ and $\zeta \in \mu_{p-1}$ such that
	\[
	\zeta\pi x = (p^ks)^{hN}.
	\]
	Since $\zeta \in \mathrm{UnivNorm}(H_\infty)$ we have that $x \pi \in \mathrm{UnivNorm}(H_\infty)$. The uniqueness follows from the structure of the group $\mathrm{UnivNorm}(H_\infty)$. 
\end{proof}

The element $x$ of the previous proposition is the unknown we have to determine in order to compute $\mathcal L^{\mathrm{Gr}}(\ad^0(g_\alpha))$. Since $\pi x \in \mathrm{UnivNorm}(H_\infty)$, the following lemma gives a condition on the value of $x$. 
\begin{lemma}\label{lambda(f)(q)=0}
	Let $q \in \Q_p^\times$ and let $f \in H^1(\Q_\Sigma/\Q, V)$ be a generator of $C$. Then, $q \in \mathrm{UnivNorm}(H_\infty)$ if and only if $\lambda(f)(q) = 0$, where we are using local class field theory to indentify $\Hom(G_{\Q_p},\Q_p) \simeq \Hom(\Q_p^\times,\Q_p)$.
\end{lemma}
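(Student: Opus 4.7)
The plan is to unwind the definitions of $C$, $H_\infty$, and $\mathrm{UnivNorm}(H_\infty)$ and then invoke local class field theory. The pivotal point is that by Definition \ref{Zpextn}, $\lambda(f)\in \lambda(C) = \Hom(\Gal(H_\infty/\Q_p),\Q_p)$, so $\lambda(f)$ factors as
\[
\lambda(f): G_{\Q_p} \twoheadrightarrow \Gal(H_\infty/\Q_p) \xrightarrow{\varphi} \Q_p
\]
for some continuous homomorphism $\varphi$. Since $f$ generates $C$ and $\lambda(C)$ is one-dimensional over $\Q_p$, the character $\varphi$ is nonzero.

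For the ``only if'' direction, I would use that local class field theory identifies $\mathrm{UnivNorm}(H_\infty)$ with the kernel of the composition
\[
\Q_p^\times \xrightarrow{\rec} G_{\Q_p}^{\mathrm{ab}} \twoheadrightarrow \Gal(H_\infty/\Q_p),
\]
which is in fact the description used implicitly in the proof of Lemma \ref{structUnivNorm}. Thus if $q\in \mathrm{UnivNorm}(H_\infty)$, then $\rec(q)$ projects trivially to $\Gal(H_\infty/\Q_p)$, and precomposing with $\varphi$ gives $\lambda(f)(q)=0$.

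For the converse, I would exploit the fact that $\Gal(H_\infty/\Q_p)\simeq \Z_p$, so every nonzero continuous homomorphism $\Gal(H_\infty/\Q_p)\to \Q_p$ is automatically injective (it is of the form $a\mapsto ca$ for some $c\in\Q_p^\times$). Since $\varphi\neq 0$, the identity $\lambda(f)(q)=0$ forces the image of $\rec(q)$ in $\Gal(H_\infty/\Q_p)$ to vanish, which by the above description is equivalent to $q\in \mathrm{UnivNorm}(H_\infty)$.

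I do not foresee any substantive obstacle in carrying this out: it is essentially a bookkeeping exercise. The only care needed is to ensure that the various identifications are compatible, namely $H^1(\Q_p,\Q_p) = \Hom_{\cts}(G_{\Q_p},\Q_p) \simeq \Hom_{\cts}(\Q_p^\times,\Q_p)$ via $\rec$, and that the reciprocity map referenced in the statement of the lemma is the same one used to compute $\mathrm{UnivNorm}(H_\infty)$ in Lemma \ref{structUnivNorm}. Both are simply the standard local Artin map, so no ambiguity arises.
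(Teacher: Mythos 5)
Your argument is correct and is essentially the paper's own proof, just spelled out in more detail: the paper likewise reduces everything to $\lambda(C)=\Hom(\Gal(H_\infty/\Q_p),\Q_p)$ from Definition \ref{Zpextn} and the description of $\mathrm{UnivNorm}(H_\infty)$ as the kernel of $\Q_p^\times\to\Gal(H_\infty/\Q_p)$ used in Lemma \ref{structUnivNorm}. Your extra observation that a nonzero continuous homomorphism $\Z_p\to\Q_p$ is injective is exactly the content the paper leaves implicit in "follows from local class field theory."
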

\begin{proof}
	From Definition \ref{Zpextn} we have $\lambda(C) = \Hom(\Gal(H_\infty/\Q),\Q_p)$. Then the result follows from local class field theory.
\end{proof}

Finally, the relation between the local and global Artin maps allows us to restate this condition in terms of the following element of $\prod_{i=1}^n U_{\fp_i,\Q_p}^1$.

\begin{definition}\label{defwKQIpinert}
	Let $w = (x, 1/\pi, \dots, 1/\pi) \in \prod_{i = 1}^n U_{\fp_i,\Q_p}^1$, where $x \in 1 + p\Z_p$ is as in Proposition \ref{piepsilona}.
\end{definition}
\begin{remark}
	Note that $1/\pi \in H_1 \subset H$. Hence, we can view $1/\pi \in H_{\fp_i}$ for every $i$ by completing $H$ at the prime $\fp_i$ (since there is a natural inclusion $H \subset H_{\fp_i}$). Moreover, for $i = 2, \dots, n$, $\pi$ is congruent to $1$ modulo $\fp_i$ so we naturally have $1/\pi \in U_{\fp_i,\Q_p}^1$.
\end{remark}

\begin{proposition}\label{f(w)=0KQIpinert}
	Let $f \in H^1(\Q_\Sigma/\Q,V) \simeq \Hom_{\Delta}(\prod_{i = 1}^n U_{\fp_i,\Q_p}^1/\overline{\cO_{H,\Q_p}^1}, V)$ be a generator of $C$. We have
	\[
	f(w) = f(x, 1/\pi, \dots , 1/\pi) = 0.
	\]
\end{proposition}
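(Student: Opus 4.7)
The strategy hinges on the observation that the tuple $w$ is fixed by the decomposition subgroup $\Delta_{\fp_1}$, which forces $f(w)$ into the one-dimensional trivial subspace of $V$ under $G_{\Q_p}$. It will then suffice to show that the trivial-line component of $f(w)$ vanishes. For the $\Delta_{\fp_1}$-invariance of $w$: the element $\pi \in H_1 = H^{\Delta_{\fp_1}}$ is fixed by $\Delta_{\fp_1}$ and so is $x \in \Q_p$; since $\Delta_{\fp_1}$ stabilizes $\fp_1$ and permutes $\fp_2, \ldots, \fp_n$ among themselves, it permutes the last $n-1$ entries of $w$, all equal to $1/\pi$, without changing the tuple. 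As $f$ is $\Delta$-equivariant, $f(w) \in V^{\Delta_{\fp_1}} = V^{G_{\Q_p}}$, which by Hypothesis (B1) and the canonical decomposition \eqref{decVG_Qp} is the one-dimensional trivial line $\Q_p \subset V$.

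Next, I would compute $f(w)$ by invoking global class field theory on $H$. Fix a cocycle representative $\tilde f$ of the class of $f$, and for each place $v$ of $H$ let $\tilde f_v : H_v^\times \to V$ be the local homomorphism coming from $\tilde f|_{G_{H_v}}$ via local class field theory (a genuine hom because $V$ is $G_H$-trivial). Under the isomorphism of Proposition \ref{dimLHS}, one checks that $f(1, \ldots, a, \ldots, 1) = \tilde f_{\fp_i}(a)$ when $a \in U_{\fp_i, \Q_p}^1$ sits in the $i$-th coordinate. Applying reciprocity on $H$ to the principal idele attached to $\pi \in H^\times$ gives $\sum_v \tilde f_v(\pi) = 0$: archimedean contributions vanish because $G_{H_v}$ is finite of order at most two and $V$ is a torsion-free $\Q_p$-vector space; non-$p$ finite contributions vanish because $\tilde f$ factors through $\mathrm{Gal}(\tilde H/H)$, which is unramified outside $p$, while $\pi$ is a $p$-unit. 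Combining this with
\[
f(w) = \tilde f_{\fp_1}(x) + \sum_{i=2}^n \tilde f_{\fp_i}(1/\pi) = \tilde f_{\fp_1}(x) - \sum_{i=2}^n \tilde f_{\fp_i}(\pi),
\]
we conclude $f(w) = \tilde f_{\fp_1}(\pi x)$, with $\pi x$ viewed in $\Q_p^\times \subset H_{\fp_1}^\times$ via the identification $H_{1, \fp_1'} = \Q_p$.

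Finally, to show that the trivial-line component of $\tilde f_{\fp_1}(\pi x)$ vanishes, let $\tilde f_0$ denote the projection of $\tilde f|_{G_{\Q_p}}$ to the trivial line $\Q_p \subset V$ (well-defined on the class since any coboundary $dv$ has zero projection onto $\Q_p$). Since $f \in C$ forces $\lambda(f) \in H^1(\Q_p, \Q_p) \subset H^1(\Q_p, V/F^+V)$, the homomorphism $\tilde f_0$ corresponds under the identification of Lemma \ref{lambda(f)(q)=0} to $\lambda(f) : \Q_p^\times \to \Q_p$. The trivial-line component of $\tilde f_{\fp_1}(\pi x) = \tilde f|_{G_{H_{\fp_1}}}(\mathrm{Art}_{H_{\fp_1}}(\pi x))$ is therefore $\tilde f_0(\mathrm{Art}_{H_{\fp_1}}(\pi x))$, which by the norm-compatibility of Artin maps (using $N_{H_{\fp_1}/\Q_p}(\pi x) = (\pi x)^n$ for $n = [H_{\fp_1}:\Q_p]$) equals $n \cdot \lambda(f)(\pi x)$. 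By Proposition \ref{piepsilona}, $\pi x \in \mathrm{UnivNorm}(H_\infty)$, so Lemma \ref{lambda(f)(q)=0} yields $\lambda(f)(\pi x) = 0$; combined with the first paragraph, $f(w) = 0$. The main obstacle is the bookkeeping: matching the three avatars of $f$ (Galois cohomology class, $\Delta$-equivariant hom from the local units, and local homomorphism at each place of $H$) and tracking the Artin normalizations in the passage from $\Q_p$ to $H_{\fp_1}$.
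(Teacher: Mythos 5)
Your proposal is correct and follows essentially the same route as the paper: both reduce the claim, via the compatibility of the global class field theory identification of Proposition \ref{dimLHS} with local restriction at $\fp_1$, to the identity $f(w)=\lambda(f)\bigl((\pi x)^{\#\Delta_{\fp_1}}\bigr)$ (your $\tilde f_{\fp_1}(\pi x)$ with the exponent reappearing through norm compatibility), and then conclude from Lemma \ref{lambda(f)(q)=0} applied to the universal norm $\pi x$ together with the $G_{\Q_p}$-invariance of $w$, which places $f(w)$ on the trivial line of $V$. Your explicit idele-reciprocity computation simply spells out the commutative diagram the paper invokes.
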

\begin{proof}
	We have the following commutative diagram
	\[
	\begin{tikzcd}
	H^1(\Q_\Sigma/\Q,V) \arrow[r,"\lambda"] \arrow[d, " "]
	& H^1(\Q_p, V/F^+V) \arrow[d, "\mu"] \\
	\Hom_{\Delta}(\prod_{i = 1}^n U_{\fp_i}^1/\overline{\cO_H^1}, V) \arrow[r, ""]
	& \Hom_{\Delta_{\fp_1}}(H_{\fp_1}^\times, V/F^+V)
	\end{tikzcd}
	\]
	where the vertical maps are isomorphisms given by the restriction maps and class field theory (recall that $\mu$ was given in \eqref{rescftiso}). We proceed to explain in more detail the bottom horizontal map. By Lemma \ref{cft}, we have the isomorphism
	\[
	\Hom_{\Delta}\left(\prod_{i = 1}^n U_{\fp_i}^1/\overline{\cO_H^1}, V\right) \xrightarrow{\sim} \Hom_{\Delta}(\Gal(\tilde{H}/H), V),
	\]
	where $\tilde{H}$ is the compositum of all $\Z_p$-extensions of $H$. If we let $G_{H_{\fp_1}} \subset G_{\Q_p}$ be the decomposition subgroup, we can consider
	\[
	\Hom_{\Delta}(\Gal(\tilde{H}/H), V) \to \Hom_{\Delta_{\fp_1}}(G_{H_{\fp_1}}, V) \to \Hom_{\Delta_{\fp_1}}(G_{H_{\fp_1}},V/F^+V),
	\]
	where the first map is the restriction morphism. Composing these maps and using local class field theory gives the desired map. Now, if $f \in C \subset \Hom_{\Delta}(\prod_{i = 1}^n U_{\fp_i}^1/\overline{\cO_H^1}, V)$ we have that
	\[
	f(w) = \lambda(f)\left((\pi x)^{\#\Delta_{\fp_1}}\right),
	\]
	where the equality takes place in $V/F^+V$. Now, Lemma \ref{lambda(f)(q)=0} shows that $\lambda(f)(\pi x) = 0$ in $V/F^+V$. Since $G_{\Q_p}$ acts trivially on $w$ and hence on $f(w)$ we conclude that $f(w) = 0$ in $V$.
\end{proof}

In the next sections we will study the possible decompositions of $V$ as a sum of irreducible representations. In doing so, we will be able to be more explicit in the description of the subspace $C$ and in the statement of Proposition \ref{f(w)=0KQIpinert}. This will allow us to determine $x$ and compute Greenberg's $\mathcal L$-invariant in terms of global units and $p$-units of $H$.

\section{The case of imaginary quadratic fields in which $p$ splits}\label{SectionIQpsplits}

Consider the notation given in \S\ref{SectionIntroduction} and \S\ref{SectionLGr}.  Consider a $1$-dimensional character $\psi_g: G_K \to \C^\times$, where $K$ is an imaginary quadratic extension of $\Q$. Let $g$ be the theta series associated to $\psi_g$ and let $p$ be a rational prime not dividing its level that splits $K$. Suppose that $g$ satisfies the hypothesis stated in \S \ref{SectionLGr} and recall the definition of $\mathcal L^\mathrm{Gr}(\ad^0(g_\alpha))$ given in Definition \ref{defLGr}. In this section we compute $\mathcal L^\mathrm{Gr}(\ad^0(g_\alpha))$ modulo $\Q^\times$. This allows us to prove our main result in this case.

Denote by $\psi_g'$ the character of $G_K$ defined as follows: if $\sigma \in G_K$, define $\psi_g'(\sigma) = \psi_g(\tau \sigma \tau^{-1})$, where we recall that $\tau$ is a lift to $G_\Q$ of the nontrivial element of $\Gal(K/\Q)$. Let $\psi = \psi_g/\psi_g'$. In this case it is well known that as $G_{\Q}$-modules
\[
\ad^0(g) = \chi_K \oplus \Ind_{G_K}^{G_\Q}(\psi),
\]
where $\chi_K:G_\Q \to \Q^\times$ is the character of order two attached to $K$ and note that $\Ind_{G_K}^{G_\Q}(\psi)$ can be either irreducible or decompose as the sum of two $1$-dimensional characters. Moreover, since $p$ is unramified in $H$ we have that $p$ is coprime to the conductor of $\psi$. 

Since $p$ splits in $K$ we have $G_{\Q_p} \subset G_K$. Therefore, by Hypothesis (B1), the invariant line of $\ad^0(g)$ where $G_{\Q}$ acts by $\chi_K$ is the unique line of $\ad^0(g)$ where $G_{\Q_p}$ acts trivially. Thus, following Remark \ref{subspaceofad}, we will mainly work with the subspace of $\ad^0(g)$ corresponding to $\chi_K$. In fact, we could do all the calculations of this section using only elements of $K$, as it is done in \cite{Gr}, but we will work with elements of $H$ to show the similarities of these calculations with the ones presented in the next sections.

As in \S\ref{SectionLGr}, $\Delta = \Gal(H/\Q)$, $\fp_1 , \dots , \fp_n$ are the primes of $H$ above $p$, where $\fp_1$ is the prime singled out by the embedding $\bar{\Q} \subset \bar{\Q}_p$, $\Delta_{\fp_i}$ is the decomposition subgroup at $\fp_i$ for every $i$ and $H_1$ is the subfield of $H$ fixed by $\Delta_{\fp_1}$. Choose also $\sigma_1, \dots, \sigma_n \in \Gal(H/K)$ such that $\sigma_i(\fp_1) = \fp_i$, $\sigma_1 = 1$,  and 
\[
\Delta = \bigsqcup_{i = 1}^n \sigma_i\Delta_{\fp_1}.
\]

\subsection{Study of the subspace $C \subset H^1(\Q_\Sigma/\Q,V)$}\label{studyofCKQIpsplit}
Recall that $\pi \in H_1$ is a generator of a power of the prime ideal of $H_1$ below $\fp_1$ which is congruent to $1$ modulo $\fp_i$ for all $i = 2 , \dots , n$  (see Definition \ref{defpi}). By Proposition \ref{piepsilona}, there exists a unique $x \in 1 + p\Z_p$ such that $\pi x \in \mathrm{UnivNorm}(H_\infty)$. The study of the subspace $C$  allows us to give an explicit version of Proposition \ref{f(w)=0KQIpinert}, which gives a condition that $w = (x , 1/\pi , \dots , 1/\pi) \in \prod_{i = 1}^n U_{\fp_i,\Q_p}^1$ must satisfy. Later, we will use this to express $x$ in terms of a $p$-unit of $K$ and compute $\mathcal L^{\mathrm{Gr}}(\ad^0(g_\alpha))$.

\begin{lemma}\label{descriptionCKQIpsplit}
	We have that $C = H^1(\Q_\Sigma/\Q, \chi_K) \subset H^1(\Q_\Sigma/\Q,V)$.
\end{lemma}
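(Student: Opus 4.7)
The plan is to combine a dimension count with a direct verification that every class in $H^1(\Q_\Sigma/\Q, \chi_K)$ lies in $C$. Both spaces will turn out to be one-dimensional, so the containment will force equality.

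First I would compute $\dim_{\Q_p} C = 1$. By local class field theory, $H^1(\Q_p, \Q_p) = \Hom_{\cts}(\Q_p^\times, \Q_p)$ is two-dimensional, and $W$ is its one-dimensional subspace of unramified homomorphisms (cf.\ the proof of Proposition \ref{dimRHS}); the quotient $H^1(\Q_p, \Q_p)/W$ is therefore one-dimensional, and pulling back through the isomorphism \eqref{isomorphism} of Proposition \ref{propisomorphism} gives $\dim_{\Q_p} C = 1$.

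Next I would show $\dim_{\Q_p} H^1(\Q_\Sigma/\Q, \chi_K) = 1$ by recycling the unit-theoretic machinery of the proof of Proposition \ref{dimLHS}, applied to the irreducible character $\chi_K$. Since $\chi_K$ is nontrivial with $\chi_K(\tau) = -1$, so $\dim \chi_K^+ = 0$, we obtain multiplicity $\dim \chi_K = 1$ in $\prod_{i=1}^n U^1_{\fp_i, \Q_p}$ and multiplicity $0$ in $\overline{\cO^1_{H,\Q_p}}$; Schur's Lemma then yields the desired dimension.

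Finally I would verify the containment $H^1(\Q_\Sigma/\Q, \chi_K) \subset C$ directly. Because $p$ splits in $K$ we have $G_{\Q_p} \subset G_K$, so $G_{\Q_p}$ acts trivially on $\chi_K$; hypothesis (B1) then identifies $\chi_K$ with the unique trivial $G_{\Q_p}$-summand in the canonical decomposition \eqref{decVG_Qp}. In particular $\chi_K \cap F^+V = 0$, so the composition $\chi_K \hookrightarrow V \twoheadrightarrow V/F^+V$ lands inside the trivial summand $\Q_p \subset V/F^+V$. Consequently, for any $f \in H^1(\Q_\Sigma/\Q, \chi_K) \subset H^1(\Q_\Sigma/\Q, V)$, the class $\lambda(f)$ lies in $H^1(\Q_p, \Q_p) \subset H^1(\Q_p, V/F^+V)$, whence $f \in C$ by Definition \ref{defC}. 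The dimension count then closes the argument. The only (modest) subtlety to watch out for is the canonicity of the identification of $\chi_K$ with the trivial $G_{\Q_p}$-summand of $V$, which is guaranteed by the canonicity of \eqref{decVG_Qp} already stressed in the text.
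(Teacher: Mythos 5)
Your argument is correct and follows essentially the same route as the paper: both establish that $H^1(\Q_\Sigma/\Q,\chi_K)$ is one-dimensional via the unit computation in Proposition \ref{dimLHS}, use the splitting of $p$ in $K$ (plus (B1)) to see that $\lambda$ carries this subspace into $H^1(\Q_p,\Q_p)$, and conclude by a dimension count. Your explicit verification that $\dim_{\Q_p} C=1$ via local class field theory is just an unpacking of Corollary \ref{lambda(C)}, and phrasing the containment through Definition \ref{defC} rather than Remark \ref{lambda(C)=intersection} is only a cosmetic difference.
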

\begin{proof}
	In the proof of Proposition \ref{dimLHS} it is proven that $\dim_{\Q_p} H^1(\Q_\Sigma/\Q, \chi_K) = 1$. Moreover, since $p$ splits in $K$, $\lambda\left(H^1(\Q_\Sigma/\Q,\chi_K)\right) \subset H^1(\Q_p, \Q_p) \subset H^1(\Q_p, V/F^+V)$. Thus, the result follows from Remark \ref{lambda(C)=intersection}.
\end{proof}

Let $\theta \in \Q[\Delta] \subset \Q_p[\Delta]$ be the idempotent of $\chi_K$. The next corollary is a consequence of Proposition \ref{f(w)=0KQIpinert}.

\begin{corollary}\label{theta(w)=0}
	Let $w = (x, 1/\pi, \dots , 1/\pi) \in \prod_{i = 1}^n U_{\fp_i,\Q_p}^1$ be as in Definition \ref{defwKQIpinert}. Then, 
	\[
	\theta(w) = 0.
	\]
\end{corollary}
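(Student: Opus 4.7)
The plan is to combine Proposition \ref{f(w)=0KQIpinert} with the explicit description of $C$ provided by Lemma \ref{descriptionCKQIpsplit}, and then use the multiplicity analysis carried out in the proof of Proposition \ref{dimLHS} to promote the vanishing $f(w)=0$ to the sharper statement $\theta(w)=0$.

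First, I would fix a generator $f$ of $C$ and transport it through the isomorphism of Proposition \ref{dimLHS}, so that $f$ is realized as a non-zero $\Delta$-equivariant homomorphism
\[
f:\prod_{i=1}^{n} U_{\fp_i,\Q_p}^1 / \overline{\cO_{H,\Q_p}^1}\ \lra\ V.
\]
By Lemma \ref{descriptionCKQIpsplit} the image of $f$ is contained in the $\chi_K$-line of $V$. Since $\theta\in\Q[\Delta]$ is the idempotent cutting out $\chi_K$, any such $f$ factors through the $\chi_K$-isotypic component of the source, and $\Delta$-equivariance gives $f(w)=f(\theta w)$ for every $w$. Proposition \ref{f(w)=0KQIpinert} then yields $f(\theta w)=0$.

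Next I would show that $f$ is injective on the $\chi_K$-isotypic component of $\prod U_{\fp_i,\Q_p}^1 / \overline{\cO_{H,\Q_p}^1}$. By observation (i) in the proof of Proposition \ref{dimLHS}, $\chi_K$ occurs with multiplicity $1$ in $\prod U_{\fp_i,\Q_p}^1$; and since $\chi_K$ is odd, $\dim \chi_K^{+}=0$, so by observations (ii) and (iii) it has multiplicity $0$ in $\overline{\cO_{H,\Q_p}^1}$. Consequently the $\chi_K$-component of the quotient is still $1$-dimensional, and the natural map from the $\chi_K$-component of $\prod U_{\fp_i,\Q_p}^1$ into it is an isomorphism. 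A non-zero $\Delta$-equivariant morphism between two $1$-dimensional $\chi_K$-representations is an isomorphism, so from $f(\theta w)=0$ I conclude that $\theta w=0$ already in the quotient.

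Finally, lifting back via the injection of $\chi_K$-isotypic components established above, this forces $\theta(w)=0$ in $\prod_{i=1}^{n} U_{\fp_i,\Q_p}^1$, which is the assertion of the corollary. The only subtle point—and hence the main thing to check carefully—is the injectivity of the $\chi_K$-isotypic component of $\prod U_{\fp_i,\Q_p}^1$ into the quotient by $\overline{\cO_{H,\Q_p}^1}$; this hinges on the oddness of $\chi_K$ and on the correct counting of multiplicities in Dirichlet's unit theorem, exactly as in Proposition \ref{dimLHS}. Everything else is a direct unraveling of the definitions of $w$, $\theta$, and $C$.
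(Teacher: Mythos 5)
Your proposal is correct and follows essentially the same route as the paper's proof: a generator $f$ of $C$ factors through the $\chi_K$-isotypic component by Lemma \ref{descriptionCKQIpsplit}, so $f(w)=f(\theta(w))=0$ by Proposition \ref{f(w)=0KQIpinert}, and the multiplicity-one occurrence of $\chi_K$ in $\prod_{i=1}^n U_{\fp_i,\Q_p}^1$ forces $\theta(w)=0$. Your additional care with the quotient by $\overline{\cO_{H,\Q_p}^1}$ (using that the odd character $\chi_K$ has multiplicity zero in the units) merely makes explicit a step the paper leaves implicit.
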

\begin{proof}
	Let $f \in H^1(\Q_\Sigma/\Q,V) \simeq \Hom_\Delta(\prod_{i = 1}^n U_{\fp_i,\Q_p}^1,V)$ be a generator of $C$. Proposition \ref{f(w)=0KQIpinert} imples that $f(w) = 0$.
	By Lemma \ref{descriptionCKQIpsplit}, we have $C = H^1(\Q_\Sigma/\Q,\chi_K) \simeq \Hom_{\Delta}(\prod_{i = 1}^n U_{\fp_i,\Q_p}^1, \chi_K)$ so $f(w) = f(\theta(w)) = 0$. Since $\chi_K$ has multiplicity $1$ in $\prod_{i = 1}^n U_{\fp_i,\Q_p}^1$ it has to be $\theta(w) = 0$.
\end{proof}

\subsection{Computation of $u_1$ and $v_1$}\label{SubsectionuvKQIpsplits}
Recall $u$ and $v$ defined in \S\ref{SectionLan}. We see that $u_1 = 0$ and we give a simple expression for $v_1$. Later, this will allow us to compute $\mathcal L^\mathrm{Gr}(\ad^0(g_\alpha))$ in terms of these element and compare it with $\mathcal L^\mathrm{an}(\ad^0(g_\alpha))$.

Recall the definition of $e_1$ given in Definition \ref{defe}. Since $p$ splits in $K$, we can fix $e_1$ to be a generator of the line of $\ad^0(g)$ where $G_\Q$ acts by $\chi_K$.

\begin{proposition}\label{uKQIpsplit}
	For any $u \in \Hom_{G_\Q}(\ad^0(g), \cO_H^\times \otimes L)$ we have $u_1 = 0$.
\end{proposition}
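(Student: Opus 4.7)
The plan is to use Dirichlet's unit theorem in the equivariant form already invoked in observation (ii) of the proof of Proposition 3.2. Recall that in this section the generator $e_1$ of the line $L \subset \ad^0(g)$ on which $G_{\Q_p}$ acts trivially has been chosen, since $p$ splits in $K$, to lie in the $\chi_K$-isotypic component of $\ad^0(g)$ for the global action of $G_\Q$. Consequently, for any $G_\Q$-equivariant map $u\colon \ad^0(g)\to \cO_H^\times\otimes L$, the value $u(e_1)$ necessarily lies in the $\chi_K$-isotypic component of $\cO_H^\times\otimes L$.

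I would then compute this isotypic component. By Minkowski's refinement of Dirichlet's unit theorem, for any nontrivial irreducible $L$-representation $\pi$ of $\Delta$ the multiplicity of $\pi$ in $\cO_H^\times\otimes L$ equals $\dim \pi^+$, the dimension of the subspace of $\pi$ fixed by complex conjugation $\tau$. This is exactly observation (ii) in the proof of Proposition \ref{dimLHS}.

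The crucial input is that $K$ is imaginary quadratic, so $\tau$ restricts nontrivially to $\Gal(K/\Q)$ and hence $\chi_K(\tau)=-1$. This forces $\dim\chi_K^+ = 0$, which means that the $\chi_K$-isotypic component of $\cO_H^\times\otimes L$ vanishes. Therefore $u(e_1)=0$, which is the statement $u_1=0$.

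There is no serious obstacle here; the proof is essentially a one-line application of the observation already recorded in the proof of Proposition \ref{dimLHS}, together with the fact that $K$ is imaginary quadratic. The only thing to be careful about is to make sure that the choice of $e_1$ in this section (which is specified in the paragraph preceding the proposition, using that $p$ splits in $K$) is indeed the $\chi_K$-line, so that the $G_\Q$-equivariance of $u$ pins down $u(e_1)$ in the right isotypic component.
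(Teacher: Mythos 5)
Your proof is correct and is essentially the paper's argument: both reduce to the vanishing of the $\chi_K$-isotypic component of $\cO_H^\times \otimes L$, using that $e_1$ spans the $\chi_K$-line since $p$ splits in $K$. The only cosmetic difference is the justification of that vanishing — the paper notes that this component descends to $\cO_K^\times \otimes L = 0$ since $\cO_K^\times$ is finite, while you invoke the Minkowski--Dirichlet multiplicity formula $\dim \chi_K^+ = 0$; these are the same fact, as $K$ is imaginary quadratic.
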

\begin{proof}
	Noting that $\ad^0(g) = \chi_K \oplus \Ind_{G_K}^{G_\Q}(\psi)$, we obtain the following decomposition: $\Hom_{G_\Q}(\ad^0(g),\cO_H^\times \otimes L) = \Hom_{G_\Q}(\chi_K,\cO_H^\times \otimes L) \oplus \Hom_{G_\Q}(\Ind_{G_K}^{G_\Q}(\psi),\cO_H^\times \otimes L)$. Since $\cO_K^\times$ is a finite group, $\Hom_{G_\Q}(\chi_K,\cO_H^\times \otimes L) = 0$ and the result follows. 
\end{proof}

And now we find $v$.

\begin{lemma}\label{thetapineq0KQIpsplit}
	We have that $\theta(\pi) \neq 0$ as an element of $\cO_H[1/p]^\times/p^\Z \otimes L$.
\end{lemma}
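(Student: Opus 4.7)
The plan is to work on the divisor side: I aim to show that the image of $\theta(\pi)$ under the divisor map
$$
\cO_H[1/p]^\times/p^\Z \otimes L \longrightarrow \Bigl(\bigoplus_{i=1}^n \Z\otimes L\Bigr)\Big/ L\cdot\mathrm{div}(p)
$$
is nonzero, which forces $\theta(\pi)\neq 0$ in the source. Two ingredients are needed: a description of $\mathrm{div}(\pi)$, and a description of how $\Delta$ acts on the primes of $H$ above $p$, on which $\chi_K$ operates cleanly thanks to the split hypothesis.

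For the first ingredient, by construction $\pi \in H_1$ satisfies $(\pi) = (\fp_1')^{khN}$ in $H_1$. Since $p$ is unramified in $H$ and $\Delta_{\fp_1} = \Gal(H/H_1)$ fixes $\fp_1$, the prime $\fp_1'$ is inert in $H/H_1$, i.e.\ $\fp_1'\cO_H = \fp_1$. Hence $\mathrm{div}(\pi) = khN\cdot [\fp_1]$ in $\bigoplus_{i}\Z$, and more generally $\mathrm{div}(\sigma\pi) = khN\cdot [\sigma\fp_1]$ for every $\sigma\in\Delta$.

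For the second ingredient, the split hypothesis gives $G_{\Q_p}\subset G_K$, hence $\Delta_{\fp_1}\subset\Gal(H/K)$. Writing $p\cO_K = \mathfrak{q}_1\mathfrak{q}_2$ and labeling the primes of $H$ above $\mathfrak{q}_1$ as $\fp_1,\dots,\fp_m$ and those above $\mathfrak{q}_2$ as $\fp_{m+1},\dots,\fp_n$ (so $m=n/2$), one has $\sigma\fp_1$ above $\mathfrak{q}_1$ if and only if $\sigma\in\Gal(H/K)$, equivalently $\chi_K(\sigma)=1$. Combining this with orbit--stabilizer (each $\fp_i$ is hit $\#\Delta_{\fp_1}$ times as $\sigma$ runs through the appropriate coset) and using $\chi_K(\sigma^{-1})=\chi_K(\sigma)$ in
$$
\theta(\pi) = \frac{1}{|\Delta|}\sum_{\sigma\in\Delta}\chi_K(\sigma^{-1})\sigma(\pi),
$$
one obtains
$$
\mathrm{div}(\theta(\pi)) = \frac{khN\cdot\#\Delta_{\fp_1}}{|\Delta|}\Bigl(\sum_{i=1}^{m}[\fp_i]-\sum_{i=m+1}^{n}[\fp_i]\Bigr).
$$
This is a nonzero scalar times a vector linearly independent from $\mathrm{div}(p)=\sum_{i=1}^n[\fp_i]$, so its class modulo $L\cdot\mathrm{div}(p)$ is nonzero.

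I do not anticipate any real obstacle here: the computation is bookkeeping once the splitting hypothesis on $p$ in $K$ is used to pair the $\Gal(H/K)$-orbit structure of the primes above $p$ with the $\pm 1$-eigenspace decomposition of $\chi_K$ on $\Delta$. The only minor point to flag is that $\pi$ must have divisor supported at a single prime of $H$; this is what the inertness of $\fp_1'$ in $H/H_1$ provides, and it is exactly why Definition \ref{defpi} takes $\pi$ to live in $H_1$ rather than a smaller subfield.
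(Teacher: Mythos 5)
Your proof is correct, but it follows a genuinely different route from the paper. The paper argues by contradiction using representation-theoretic multiplicities: since $\pi\cO_H$ is a power of $\fp_1$, the elements $\sigma_i(\pi)$ together with $\cO_H^\times$ generate $\cO_H[1/p]^\times/p^\Z\otimes L$, so $\theta(\pi)=0$ would force $\Hom_{G_\Q}(\chi_K,\cO_H[1/p]^\times/p^\Z\otimes L)=\Hom_{G_\Q}(\chi_K,\cO_H^\times\otimes L)=0$ (the latter vanishing because $\cO_K^\times$ is finite), contradicting the dimension count $\dim_L\Hom_{G_\Q}(\chi_K,\cO_H[1/p]^\times/p^\Z\otimes L)=1$ supplied by Dirichlet's $S$-unit theorem and the splitness of $p$ in $K$. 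You instead push $\theta(\pi)$ through the divisor map into $\bigl(\bigoplus_i \Z\otimes L\bigr)/L\cdot\mathrm{div}(p)$ and compute its image explicitly; both arguments start from the same key observation that $\mathrm{div}_H(\pi)=khN\,[\fp_1]$ (your justification via inertness of $\fp_1'$ in $H/H_1$, i.e.\ $\Gal(H/H_1)=\Delta_{\fp_1}$ plus $p$ unramified, is correct), but where the paper then invokes the unit-theorem dimension count, you only need that $\Delta_{\fp_1}\subset\ker\chi_K$ (splitness) makes $\chi_K$ constant on the cosets indexing the primes, so that the image is a nonzero multiple of $\sum_{\fp\mid\mathfrak{q}_1}[\fp]-\sum_{\fp\mid\mathfrak{q}_2}[\fp]$, visibly independent of $\mathrm{div}(p)$. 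Your argument is more elementary and explicit, producing a concrete nonvanishing certificate on the divisor side without any appeal to Dirichlet's theorem; the paper's version has the advantage of reusing the isotypic-multiplicity bookkeeping already set up in the proof of Proposition \ref{dimLHS}, which is the language in which the rest of the computation of $\mathcal L^{\mathrm{Gr}}$ is carried out.
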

\begin{proof}
	Note that since $H_1 = H^{\Delta_{\fp_1}}$ and $\pi$ generates a power of $\fp_1'$, we see that $\pi\cO_H$ also generates a power of $\fp_1$. Therefore, $\sigma_i(\pi)$ generates (a power) of $\fp_i$ for every $i$. Hence, it follows from Dirichlet's S-unit theorem that the set of global units together with $\sigma_i(\pi)$ for all $i$ generates $\cO_H[1/p]^\times/p^\Z \otimes L$. 
	
	For the sake of contradiction assume that $\theta(\pi) = 0$. Then, for every $\sigma_i$ we would have $\theta(\sigma_i\pi) = \sigma_i \theta(\pi) = 0$ which implies $\Hom_{G_\Q}(\chi_K, \cO_H[1/p]^\times/p^\Z \otimes L) = \Hom_{G_\Q}(\chi_K, \cO_H^\times \otimes L) = 0$, because $\cO_K^\times$ is finite. This is a contradiction: since $p$ splits in $K$ it follows from Dirichlet $S$-unit theorem that $\dim_L \Hom_{G_\Q}(\chi_K, \cO_H[1/p]^\times/p^\Z \otimes L) = 1$. Thus $\theta(\pi) \neq 0$ as we wanted to see.
\end{proof}

\begin{proposition}\label{vQIpsplit}
	We can choose $v \in \Hom_{G_\Q}(\ad^0(g), \cO_H[1/p]^\times/p^\Z \otimes L)$ such that $v_1 = \theta(\pi)$. Moreover, $\{ u , v \}$ is a basis of $\Hom_{G_\Q}(\ad^0(g), \cO_H[1/p]^\times/p^\Z \otimes L)$ for any $u$ as above.
\end{proposition}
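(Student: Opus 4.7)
The plan is to exploit the $G_\Q$-module decomposition $\ad^0(g) = \chi_K \oplus \Ind_{G_K}^{G_\Q}(\psi)$, which gives rise to the direct sum decomposition
\[
\Hom_{G_\Q}(\ad^0(g), \cO_H[1/p]^\times/p^\Z \otimes L) = \Hom_{G_\Q}(\chi_K, \cO_H[1/p]^\times/p^\Z \otimes L) \oplus \Hom_{G_\Q}(\Ind_{G_K}^{G_\Q}(\psi), \cO_H[1/p]^\times/p^\Z \otimes L).
\]
First I would check that each summand is one-dimensional. The first summand has dimension one by the Dirichlet $S$-unit theorem (using that $p$ splits in $K$), as already recalled in the proof of Lemma \ref{thetapineq0KQIpsplit}. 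Since the total dimension equals $2$, the second summand has dimension one as well.

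Next I would construct $v$. Since $e_1$ generates the $\chi_K$-line of $\ad^0(g)$, there is a unique $G_\Q$-equivariant homomorphism from the $\chi_K$-summand of $\ad^0(g)$ to $\cO_H[1/p]^\times/p^\Z \otimes L$ sending $e_1$ to $\theta(\pi)$; by Lemma \ref{thetapineq0KQIpsplit}, this homomorphism is nonzero, hence generates the first summand above. Extending by zero on the $\Ind_{G_K}^{G_\Q}(\psi)$-component produces an element $v \in \Hom_{G_\Q}(\ad^0(g), \cO_H[1/p]^\times/p^\Z \otimes L)$ with $v_1 = v(e_1) = \theta(\pi)$, as required.

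Finally, I would verify the basis statement. By Proposition \ref{uKQIpsplit}, $u_1 = u(e_1) = 0$ for any $u \in \Hom_{G_\Q}(\ad^0(g),\cO_H^\times \otimes L)$, which says exactly that $u$ lies in the $\Ind_{G_K}^{G_\Q}(\psi)$-summand; since $u$ is nonzero and this summand is one-dimensional, $u$ spans it. On the other hand, $v$ has nonzero projection $\theta(\pi)$ onto the $\chi_K$-summand, so $v$ is linearly independent from $u$. As the total Hom space has dimension $2$, $\{u,v\}$ is a basis.

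Since all the genuine content has already been packaged into the one-dimensionality of $\Hom_{G_\Q}(\chi_K, \cO_H[1/p]^\times/p^\Z \otimes L)$ (Dirichlet plus $p$ split in $K$) and into the non-vanishing statement of Lemma \ref{thetapineq0KQIpsplit}, the only remaining work is the formal verification above, so no real obstacle is expected.
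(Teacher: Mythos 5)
Your proof is correct and follows essentially the same route as the paper: both define $v$ by sending $e_1 \mapsto \theta(\pi)$ and extending by zero on the $\Ind_{G_K}^{G_\Q}(\psi)$-component, then deduce linear independence of $\{u,v\}$ from $u_1 = 0$ (Proposition \ref{uKQIpsplit}) versus $v_1 = \theta(\pi) \neq 0$ (Lemma \ref{thetapineq0KQIpsplit}). Your version simply makes the summand-by-summand dimension count more explicit than the paper does.
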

\begin{proof} 
	Define $v \in \Hom(\ad^0(g),\cO_H[1/p]^\times/p^\Z \otimes L)$ as $v(e_1) = \theta(\pi)$ and $v(\Ind_{G_K}^{G_\Q}(\psi)) = 0$ and note that $v$ is $G_{\Q}$-equivariant. Moreover, for any $u \in \Hom_{G_\Q}(\ad^0(g), \cO_H^\times \otimes L)$ we have that $u$ and $v$ are linearly independent. Indeed, by Proposition \ref{uKQIpsplit}, $u_1 = 0$ while we have $v_1 = \theta(\pi) \neq 0$ in $\cO_H[1/p]^\times/p^\Z \otimes L$ by Lemma \ref{thetapineq0KQIpsplit}.
\end{proof}

\subsection{Computation of $\mathcal L^{\mathrm{Gr}}(\ad^0(g_\alpha))$}
Before explaining how we will proceed note the following.
\begin{remark}\label{padiclog}
	For every $i$, the $p$-adic logarithm gives an isomorphism $\log_p: U_{\fp_i,\Q_p}^1 \to H_{\fp_i}$ wich is $\Delta_{\fp_i}$-equivariant. This induces an isomorphism $\log_p: \prod_{i = 1}^n U_{\fp_i,\Q_p}^1 \to \prod_{i = 1}^n H_{\fp_i}$ which is $\Delta$-equivariant. We will work with the image of this map.
\end{remark}

Let $w$ be as in Definition \ref{defwKQIpinert}. We use Corollary \ref{theta(w)=0} to determine the value of $x \in 1 + p\Z_p$. This allows us to compute $\mathcal L^{\mathrm{Gr}}(\ad^0(g_\alpha))$ in terms of $v_1$.

\begin{corollary}\label{LGr=LanKQIpinert}
	Let $v$ be as in \S\ref{SubsectionuvKQIpsplits}. Then, 
	\[
	\mathcal{L}^{\mathrm{Gr}}(\ad^0(g_\alpha)) \equiv \log_p(v_1) \quad (\mathrm{mod} \, \Q^\times).
	\]
	Thus, if $g$ satisfies Hypothesis (A1-2-3),
	\[
	\mathcal{L}^{\mathrm{Gr}}(\ad^0(g_\alpha)) \equiv \mathcal{L}^{\mathrm{an}}(\ad^0(g_\alpha)) \quad (\mathrm{mod} \, L^\times).
	\]
\end{corollary}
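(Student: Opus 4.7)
The plan is to express $\cL^{\mathrm{Gr}}(\ad^0(g_\alpha))$ explicitly as $\log_p(v_1)$ modulo $\Q^\times$, and then feed this into Theorem \ref{main1'} together with the vanishing $u_1 = 0$ from Proposition \ref{uKQIpsplit} to obtain the matching analytic formula. By Definition \ref{defLGr} and Proposition \ref{piepsilona}, $\cL^{\mathrm{Gr}}(\ad^0(g_\alpha)) = \log_p(\pi x)/\ord_p(\pi x)$; since $x \in 1+p\Z_p$ we have $\ord_p(\pi x) = \ord_p(\pi) \in \Q^\times$, so the first congruence reduces to showing $\log_p(\pi x) \equiv \log_p(v_1) \pmod{\Q^\times}$. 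The entire input from the algebraic side is concentrated in the single identity $\theta(w) = 0$ of Corollary \ref{theta(w)=0}, which I would exploit to recover $\log_p(x)$ from Galois translates of $\pi$.

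Concretely, I would unwind the $\Delta$-action on $\prod_i U_{\fp_i,\Q_p}^1$: the $\fp_1$-component of $\delta(w)$ is $x$ when $\delta \in \Delta_{\fp_1}$ and is $1/\delta(\pi)$, embedded into $H_{\fp_1}$ via $\bar\Q\subset\bar\Q_p$, otherwise. Taking logarithms (Remark \ref{padiclog}) and extracting the $\fp_1$-component of $\theta(w) = 0$, then using that $p$ splits in $K$ so that $\Delta_{\fp_1} \subset \Gal(H/K)$ and therefore $\chi_K|_{\Delta_{\fp_1}} \equiv 1$, one obtains
\[
|\Delta_{\fp_1}|\,\log_p(x) \;=\; \sum_{\delta \notin \Delta_{\fp_1}} \chi_K(\delta^{-1})\,\log_p(\delta(\pi)).
\]
Applying exactly the same character expansion to $\theta(\pi)$ at $\fp_1$ and using $\pi \in H_1 = H^{\Delta_{\fp_1}}$ to collapse the contribution of $\delta \in \Delta_{\fp_1}$, I expect
\[
\log_p(v_1) \;=\; \log_p(\theta(\pi)) \;=\; \frac{|\Delta_{\fp_1}|}{|\Delta|}\bigl(\log_p(\pi) + \log_p(x)\bigr) \;=\; \frac{|\Delta_{\fp_1}|}{|\Delta|}\log_p(\pi x),
\]
which gives the desired congruence modulo $\Q^\times$.

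For the second congruence, Theorem \ref{main1'} combined with $\log_p(u_1) = 0$ (from Proposition \ref{uKQIpsplit}) collapses the displayed ratio to $-\log_p(v_1)$, and since $-1 \in L^\times$ this yields $\cL^{\mathrm{an}}(\ad^0(g_\alpha)) \equiv \log_p(v_1) \pmod{L^\times}$; chaining the two congruences completes the proof.

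The one delicate point, and the step I would execute most carefully, is pinning down the correct convention for the $\Delta$-action on $\prod_i U^1_{\fp_i,\Q_p}$ (the permutation of primes composed with the local Galois action) and tracking which embedding $H \hookrightarrow H_{\fp_1}$ underlies each logarithm; once this bookkeeping is fixed, the triviality of $\chi_K$ on $\Delta_{\fp_1}$ makes the remainder a direct orthogonality computation.
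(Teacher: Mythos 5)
Your proposal is correct and takes essentially the same route as the paper's proof: it extracts the $\fp_1$-component of $\theta(\log_p(w))=0$ (Corollary \ref{theta(w)=0}) to express $\#\Delta_{\fp_1}\log_p(x)$ as $\sum_{\sigma\notin\Delta_{\fp_1}}\chi_K(\sigma^{-1})\log_p(\sigma\pi)$, identifies $\log_p(v_1)=\log_p(\theta(\pi))$ as a rational multiple of $\log_p(\pi x)$, and concludes via Proposition \ref{uKQIpsplit} and Theorem \ref{main1'}. The only difference is that you make explicit the final character-expansion step and the $\ord_p$ normalization, which the paper leaves implicit.
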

\begin{proof}
	By Corollary \ref{theta(w)=0}, we have $\theta(\log_p(w)) = 0$. Using the natural identification $\prod_{i = 1}^n H_{\fp_i} \simeq \Ind_{\Delta_{\fp_1}}^{\Delta}(H_{\fp_1})$, which is isomorphic to the regular representation of $\Delta$, we can compute explicitly $\theta(\log_p(w))$. The first component of $\theta(\log_p(w)) = 0$ (i.e. the component corresponding to $H_{\fp_1}$) gives
	\[
	\#\Delta_{\fp_1}\log_p(x) = \sum_{i = 2}^n \sum_{\delta \in \Delta_{\fp_1}} \chi_K\left((\sigma_i\delta)^{-1}\right)\log_p(\sigma_i \pi). 
	\]
	Therefore, 
	\[
	\mathcal L^{\mathrm{Gr}} (\ad^0(g_\alpha)) \equiv \log_p(x) + \log_p(\pi) \equiv \log_p(v_1) \quad (\mathrm{mod} \, \Q^\times),
	\]
	where we used Proposition \ref{vQIpsplit} for the last congruence. To prove the equality between $\mathcal L$-invariants (modulo $L^\times$) use Proposition \ref{uKQIpsplit} and Theorem \ref{main1'}.
\end{proof}

\section{The case of imaginary quadratic fields in which $p$ is inert}\label{SectionIQpinert}

Consider the notation given in \S\ref{SectionIntroduction} and \S\ref{SectionLGr}. Consider a $1$-dimensional character $\psi_g: G_K \to \C^\times$, where $K$ is an imaginary quadratic extension of $\Q$. Let $g$ be the theta series associated to $\psi_g$ and let $p$ be a rational prime not dividing its level that is inert in $K$. Suppose that $g$ satisfies the hypothesis stated in \S\ref{SectionLGr} and recall the definition of $\mathcal L^\mathrm{Gr}(\ad^0(g_\alpha))$ given in Definition \ref{defLGr}. In this section we compute $\mathcal L^\mathrm{Gr}(\ad^0(g_\alpha))$ modulo $\Q^\times$. This allows us to prove our main result in this case.

Recall the definition of $\psi = \psi_g/\psi_g'$ given in the beginning of \S\ref{SectionIQpsplits}. It is well known that as $G_{\Q}$-modules
\begin{equation}\label{decadKQIpinert}
\ad^0(g) = \chi_K \oplus \Ind_{G_K}^{G_\Q}(\psi),
\end{equation}
where $\chi_K:G_\Q \to \Q^\times$ is the character of order two attached to $K$. Moreover, since $p$ is unramified in $H$ we have that $p$ is coprime with the conductor of $\psi$.

Since $p$ is inert in $K$ we have $G_{\Q_p} \not\subset G_K$. Therefore, $\Ind_{G_K}^{G_\Q}(\psi)$ contains the line of $\ad^0(g)$ where $G_{\Q_p}$ acts trivially. Thus, most of the calculations will be related with the subspace $\Ind_{G_K}^{G_\Q}(\psi)$ (see Remark \ref{subspaceofad}). To simplify the notation, let $\varrho$ be the direct summand of $\varrho_{\ad^0(g)}$ corresponding to $\Ind_{G_K}^{G_\Q}(\psi)$ and let $\theta$ be its idempotent. We can assume that $\varrho$ is irreducible. Otherwise, following the discussion of the proof of Proposition \ref{dimRHS} and using Hypothesis (B2), $\varrho$ has a direct summand equal to $\chi_{K'}$, where $K'$ is an imaginary quadratic field where $p$ splits and $\chi_{K'}$ is the character of order two attached to it. Then, it is well known that $\ad^0(g)$ can be seen as the adjoint representation of the theta series of a character of quadratic imaginary field where $p$ splits, which is a case that we already studied in \S\ref{SectionIQpsplits}.

As in \S\ref{SectionLGr}, $\Delta = \Gal(H/\Q)$, $\fp_1 , \dots , \fp_n$ are the primes of $H$ above $p$, where $\fp_1$ is the prime singled out by the embedding $\bar{\Q} \subset \bar{\Q}_p$, $\Delta_{\fp_i}$ is the decomposition subgroup at $\fp_i$ for every $i$ and $H_1$ is the subfield of $H$ fixed by $\Delta_{\fp_1}$. Choose also $\sigma_1, \dots, \sigma_n \in \Gal(H/K)$ such that $\sigma_i(\fp_1) = \fp_i$, $\sigma_1 = 1$,  and 
\[
\Delta = \bigsqcup_{i = 1}^n \sigma_i\Delta_{\fp_1}.
\]

\begin{remark}\label{sizedec=2}
	Note that $H$ is the ring class field associated to the ring class character $\psi$ and $(p)$ is a principal ideal of $K$ coprime to the conductor of $\psi$. Therefore, $(p)$ splits completely in $H$. Thus, $n = [H:K]$, $\Delta_{\fp_i}$ is a group of order $2$ for every $i$ and $\Gal(H/K) = \{\sigma_1, \dots \sigma_n\}$. 
\end{remark}

\subsection{Study of the subspace $C \subset H^1(\Q_\Sigma/\Q,V)$} We proceed as in \S\ref{studyofCKQIpsplit}. Note that the decomposition of $V$ given in \eqref{decadKQIpinert} induces the following decomposition
\begin{equation}\label{DecH^1KQIpinert}
H^1(\Q_\Sigma/\Q, V) = H^1(\Q_\Sigma/\Q,\chi_K) \oplus H^1(\Q_\Sigma/\Q, \Ind_{G_K}^{G_\Q}(\psi)).
\end{equation}

\begin{lemma}\label{f_2not0}
	Let $f \in H^1(\Q_\Sigma/\Q,V)$ be a generator of $C$. Following \eqref{DecH^1KQIpinert} write
	\[
	f = f_1 + f_2
	\]
	with $f_1 \in H^1(\Q_\Sigma/\Q, \chi_K)$ and $f_2 \in H^1(\Q_\Sigma/\Q, \Ind_{G_K}^{G_\Q}(\psi))$. Then $f_2 \neq 0$.
\end{lemma}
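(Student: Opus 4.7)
The plan is to derive a contradiction from the assumption $f_2 = 0$. In this case $f = f_1$ is a nonzero element of $H^1(\Q_\Sigma/\Q, \chi_K) \subset H^1(\Q_\Sigma/\Q, V)$, and I would aim to show $\lambda(f) = 0$, contradicting the injectivity of $\lambda$ established in Proposition~\ref{propisomorphism}.

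The key local input is that since $p$ is inert in $K$, the Frobenius $\Frob_p$ lies outside $G_K$ and so the trace of Frobenius on $\rho_g = \Ind_{G_K}^{G_\Q}(\psi_g)$ vanishes. Thus $a_p(g) = 0$, giving $\alpha = -\beta$ and $\alpha/\beta = \beta/\alpha = -1 = \chi_K(\Frob_p)$. Consequently, both lines $\Q_p^{\alpha/\beta}$ and $\Q_p^{\beta/\alpha}$ in the canonical decomposition \eqref{decVG_Qp} are copies of $\chi_K|_{G_{\Q_p}}$, and the $\chi_K|_{G_{\Q_p}}$-isotypic part of $V/F^+V = \Q_p \oplus \Q_p^{\alpha/\beta}$ is exactly the line $\Q_p^{\alpha/\beta}$ (the other summand $\Q_p$ being trivial and therefore non-isomorphic to $\chi_K|_{G_{\Q_p}}$).

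Next I would trace $f_1$ through $\lambda$. The composition $\chi_K \hookrightarrow V \twoheadrightarrow V/F^+V$ is $G_{\Q_p}$-equivariant, so its image is a $G_{\Q_p}$-subrepresentation of $V/F^+V$ isomorphic to a quotient of $\chi_K|_{G_{\Q_p}}$; by the previous paragraph, this image is either zero (when $\chi_K$ coincides with $F^+V$ inside $V|_{G_{\Q_p}}$) or the full line $\Q_p^{\alpha/\beta}$. In either case, by functoriality,
\[
\lambda(f_1) \in H^1(\Q_p, \Q_p^{\alpha/\beta}) \subset H^1(\Q_p, V/F^+V).
\]

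To conclude, I would combine $f \in C$ with Definition~\ref{defC} and the inclusion $W \subset H^1(\Q_p, \Q_p)$ from Proposition~\ref{dimRHS} to deduce $\lambda(f) \in H^1(\Q_p, \Q_p) + W = H^1(\Q_p, \Q_p)$. Since $H^1(\Q_p, V/F^+V) = H^1(\Q_p, \Q_p) \oplus H^1(\Q_p, \Q_p^{\alpha/\beta})$ as a direct sum, the two subspaces meet only in zero, so $\lambda(f) = 0$. The injectivity of $\lambda$ then forces $f = 0$, contradicting the fact that $f$ generates the nonzero line $C$. The main conceptual step is the second one, where the inert hypothesis must be used to pin down the $\chi_K$-isotypic content of $V/F^+V$; once $a_p(g) = 0$ is in hand, the remainder of the proof is a formal direct-sum chase.
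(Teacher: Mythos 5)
Your proof is correct and takes essentially the same route as the paper: assuming $f_2 = 0$ forces $f$ to be a nonzero class valued in $\chi_K$, whose image under $\lambda$ lands in $H^1(\Q_p,\Q_p^{\alpha/\beta})$ because $p$ inert makes $\chi_K|_{G_{\Q_p}}$ nontrivial, and this is incompatible with $\lambda(C)\subset H^1(\Q_p,\Q_p)$ together with the injectivity of $\lambda$ (the paper phrases the contradiction via Corollary \ref{lambda(C)} rather than via $\lambda(f)=0$, but the ingredients are identical). Your explicit verification that $a_p(g)=0$, hence $\alpha/\beta=\beta/\alpha=-1$, and that the image of the $\chi_K$-line in $V/F^+V$ is exactly the $(-1)$-eigenline $\Q_p^{\alpha/\beta}$ simply fills in the detail the paper compresses into the remark that $\chi_K$ is not the line on which $G_{\Q_p}$ acts trivially.
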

\begin{proof}
	For the sake of contradiction suppose that $f_2 = 0$. Then, $C = H^1(\Q_\Sigma/\Q,\chi_K)$ (we are using that $H^1(\Q_\Sigma/\Q, \chi_K)$ has dimension $1$ as proven in Proposition \ref{dimLHS}). But since $p$ is inert in $K$, $\chi_K$ is not the line where $G_{\Q_p}$ acts trivially so
	\[
	\lambda(C) = \lambda(H^1(\Q_\Sigma/\Q,\chi_K)) \not\subset H^1(\Q_p, \Q_p),
	\]
	where we are viewing $H^1(\Q_p, \Q_p) \subset H^1(\Q_p, V/F^+V)$. This is a contradiction with Corollary \ref{lambda(C)}.
\end{proof}

The following corollary is a consequence of Proposition \ref{f(w)=0KQIpinert}. Before stating it we need to introduce the following notation. Note that, by Remark \ref{sizedec=2}
\[
\theta = \frac{1}{n}\sum_{i = 1}^n (\psi + \psi^{-1})(\sigma_i)\sigma_i \in L[\Delta] \subset \Q_p[\Delta].
\]
The proof of Proposition \ref{dimLHS} shows that $\varrho$ has multiplicity $1$ in $\cO_H^1 \otimes_\Z L$. Since $\varrho$ contains the line of $\ad^0(g)$ where $G_{\Q_p}$ acts trivially we can fix the following global unit.
\begin{definition}\label{defvarepsilon}
	Fix $\varepsilon \in \cO_{H_1}^1$ such that $\theta(\varepsilon) \neq 0$ in $\cO_H^1 \otimes L$. Note that $\theta(\varepsilon)$ generates the line of $(\cO_H^1 \otimes L)^\varrho$ (here and from now on this notation stands for the $\varrho$-isotypical component) where $G_{\Q_p}$ acts trivially.
\end{definition}
Recall that the map $\cO_{H,\Q_p}^1 \to \prod_{i = 1}^n U_{\fp_i,\Q_p}^1$ is $\Delta$-equivariant. Therefore, the image of $\theta(\varepsilon)$ is $(\theta(\varepsilon), \dots, \theta(\varepsilon))$. Moreover, the proof of \cite[Lemma 2.5]{GV} ensures that $(\theta(\varepsilon), \dots, \theta(\varepsilon))$ generates the unique line of $\left(\overline{\cO_{H,\Q_p}^1}\right)^\varrho \subset \prod_{i = 1}^n U_{\fp_i,\Q_p}^1$ where $G_{\Q_p}$ acts trivially.

\begin{corollary}\label{projidempotent}
	Let $w = (x , 1/\pi ,\dots, 1/\pi) \in \prod_{i = 1}^n U_{\fp_i,\Q_p}^1$ be as in Definition \ref{defwKQIpinert}. Then, there exists $b \in \Q_p$ such that
	\[
	\theta(w) = b(\theta(\varepsilon), \dots , \theta(\varepsilon)).
	\]
\end{corollary}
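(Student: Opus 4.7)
The plan is to exploit the vanishing $f(w)=0$ from Proposition \ref{f(w)=0KQIpinert}, applied to a generator $f$ of $C$, after splitting $f$ along the decomposition $V = \chi_K \oplus \varrho$ of \eqref{decadKQIpinert}. Write $f = f_1 + f_2$ with $f_1 \in H^1(\Q_\Sigma/\Q,\chi_K)$ and $f_2 \in H^1(\Q_\Sigma/\Q,\Ind_{G_K}^{G_\Q}(\psi))$; since $\chi_K$ and $\varrho$ are independent direct summands, the identity $f(w)=0$ forces $f_1(w)=0$ and $f_2(w)=0$ separately. By Lemma \ref{f_2not0} we have $f_2 \neq 0$, so the useful condition is $f_2(w)=0$.

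Under the isomorphism $H^1(\Q_\Sigma/\Q,V) \simeq \Hom_\Delta(\prod_{i=1}^n U_{\fp_i,\Q_p}^1/\overline{\cO_{H,\Q_p}^1},V)$ from Proposition \ref{dimLHS}, the map $f_2$ is $\Delta$-equivariant with image in $\varrho$, hence factors through the $\varrho$-isotypical projector $\theta$, giving $f_2(\theta(w)) = f_2(w) = 0$. The task reduces to identifying $\ker f_2$ on the $\varrho$-isotypical component of $\prod_{i=1}^n U_{\fp_i,\Q_p}^1$. The multiplicity computations in the proof of Proposition \ref{dimLHS} give multiplicity $\dim \varrho = 2$ for $\varrho$ in $\prod_{i=1}^n U_{\fp_i,\Q_p}^1$ and multiplicity $\dim \varrho^+ = 1$ in $\overline{\cO_{H,\Q_p}^1}$, hence multiplicity $1$ in the quotient. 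Since $\varrho$ is irreducible and $f_2 \neq 0$, Schur's lemma shows that $f_2$ restricts to an isomorphism of the $\varrho$-isotypical part of the quotient onto $\varrho$. Pulling back through the projection $\prod U_{\fp_i,\Q_p}^1 \twoheadrightarrow \prod U_{\fp_i,\Q_p}^1/\overline{\cO_{H,\Q_p}^1}$, the kernel of $f_2$ on the $\varrho$-isotypical part of $\prod_{i=1}^n U_{\fp_i,\Q_p}^1$ is exactly $\overline{\cO_{H,\Q_p}^1}^{\,\varrho}$, and therefore $\theta(w) \in \overline{\cO_{H,\Q_p}^1}^{\,\varrho}$.

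To conclude, verify that $\theta(w)$ is $G_{\Q_p}$-invariant. As $\theta$ is a central idempotent in $L[\Delta]$, it commutes with the $\Delta$-action, so it suffices to show that $\sigma w = w$ for every $\sigma \in \Delta_{\fp_1} \simeq G_{\Q_p}$. Such a $\sigma$ fixes $\fp_1$ and permutes the primes $\fp_2,\ldots,\fp_n$ among themselves, acting on each coordinate through the corresponding isomorphism $\sigma \colon H_{\fp_j} \to H_{\sigma(\fp_j)}$: the first coordinate $x \in 1+p\Z_p \subset \Q_p$ is fixed because $\sigma$ acts trivially on $\Q_p$, and the remaining coordinates $1/\pi$ are permuted among themselves and each sent to $\sigma(1/\pi) = 1/\pi$ since $\pi \in H_1 = H^{\Delta_{\fp_1}}$. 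Combining this with the previous paragraph, $\theta(w)$ lies in the $G_{\Q_p}$-invariant part of $\overline{\cO_{H,\Q_p}^1}^{\,\varrho}$, which by the consequence of \cite[Lemma 2.5]{GV} recalled just before the corollary is the one-dimensional line generated by $(\theta(\varepsilon),\ldots,\theta(\varepsilon))$. The existence of $b \in \Q_p$ with $\theta(w) = b(\theta(\varepsilon),\ldots,\theta(\varepsilon))$ follows.

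The main substantive step I anticipate is the Schur-lemma identification of $\ker f_2$ via the multiplicity bookkeeping on the $\varrho$-isotypical components of $\prod U_{\fp_i,\Q_p}^1$ and $\overline{\cO_{H,\Q_p}^1}$; the Galois invariance of $w$ is a formal verification using that $\pi \in H_1$ and $x \in \Q_p$.
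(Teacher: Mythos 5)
Your proposal is correct and follows essentially the same route as the paper: decompose a generator $f$ of $C$ as $f_1+f_2$, use $f(w)=0$ and Lemma \ref{f_2not0} to get $f_2(\theta(w))=0$ with $f_2\neq 0$, invoke the multiplicity count from Proposition \ref{dimLHS} to place $\theta(w)$ in $\left(\overline{\cO_{H,\Q_p}^1}\right)^{\varrho}$, and finish with the $G_{\Q_p}$-invariance of $w$ and the line generated by $(\theta(\varepsilon),\dots,\theta(\varepsilon))$. The only difference is that you spell out the Schur-lemma identification of $\ker f_2$ and the verification that $\Delta_{\fp_1}$ fixes $w$, which the paper leaves implicit.
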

\begin{proof}
	Let $f \in H^1(\Q_\Sigma/\Q,V) \simeq \Hom_{\Delta}(\prod_{i = 1}^n U_{\fp_i,\Q_p}^1, V)$ be a generator of $C$. Write $f = f_1 + f_2$ according to the decomposition \eqref{DecH^1KQIpinert}, where $f_2 \neq 0$ due to Lemma \ref{f_2not0}. By Proposition \ref{f(w)=0KQIpinert}, we have that $f(w) = 0$, so $f_2(w) = 0$ as well. Note that $f_2 \in \Hom_{\Delta}(\prod_{i = 1}^n U_{\fp_i,\Q_p}^1/\overline{\cO_{H,\Q_p}^1}, \Ind_{G_K}^{G_\Q}(\psi))$ factors through
	\[
	\left(\prod_{i = 1}^n U_{\fp_i,\Q_p}^1/\overline{\cO_{H,\Q_p}^1}\right)^\varrho
	\]
	and that in the proof of Proposition \ref{dimLHS} we saw that $\varrho$ has multiplicity $2$ in $\prod_{i = 1}^n U_{\fp_i,\Q_p}^1$ and it has multiplicity $1$ in $\overline{\cO_{H,\Q_p}^1}$. Since $f_2 \neq 0$ but $f_2(w) = f_2(\theta(w)) = 0$ we conclude that 
	\[
	\theta(w) \in \left(\overline{\cO_{H,\Q_p}^1}\right)^{\varrho}.
	\]
	Now we just have to note that $G_{\Q_p}$ acts trivially on $\theta(w)$, so it belongs to the line generated by $(\theta(\varepsilon), \dots , \theta(\varepsilon))$.
\end{proof}
\begin{remark}
	Recall that $U_{\fp_i,\Q_p}^1 = U_{\fp_i}^1 \otimes_{\Z_p} \Q_p$. Therefore, $\prod_{i = 1}^n U_{\fp_i,\Q_p}^1$ is a $\Q_p$-vector space. Note that in the previous corollary, and from now on, we denote the internal operation of the group of principal units by addition and the operation between a scalar of $\Q_p$ and a principal unit by multiplication. We will also use this notation, for example, in Corollary \ref{projidempotentKQRpinert} and in Corollary \ref{thetaw} and we will use an analogous one when working with the $L$-vector spaces $\cO_H^1 \otimes L$ and $\cO_H[1/p]^\times/p^\Z \otimes L$.
\end{remark}

\subsection{Computation of $u_1,\ u_{\beta/\alpha},\ v_1,\ v_{\beta/\alpha}$}\label{SubsectionKQIpinert}
We give a pair $\{u ,v\}$ as in \S\ref{SectionLan} and we use it to compute $u_1,\ u_{\beta/\alpha},\ v_1,\ v_{\beta/\alpha}$ in terms of $\varepsilon$ and $\pi$.

We will define $u,v \in \Hom_{G_\Q}(\ad^0(g), \cO_H[1/p]^\times/p^\Z \otimes L)$ by giving their images in a concrete basis of $\ad^0(g)$. Recall that $\ad^0(g) = \chi_K \oplus \Ind_{G_K}^{G_\Q}(\psi)$. Let $y$ be a generator of the line of $\ad^0(g)$ where $G_{\Q}$ acts by $\chi_K$. Let $s \in \Delta_{\fp_1}$ be such that $s \not \in \Gal(H/K)$. Then, by the definition of induced representation, if we let $x$ be a generator of the line of $\Ind_{G_K}^{G_\Q}(\psi)$ where $G_K$ acts by $\psi$, we have that  $\{x,sx\}$ is a basis of $\Ind_{G_K}^{G_\Q}(\psi)$. 

\begin{definition}\label{baseofInd}
	Define $\{x,sx,y\}$ as above. They form a basis of $\ad^0(g)$.
\end{definition}
\begin{remark}
	Note that we may have to extend scalars in order to define the elements $x,sx$. 
\end{remark}
We proceed to define $u$ and $v$ as in \S\ref{SectionLan}.
\begin{proposition}\label{uKQIpinert}
	The element $u \in \Hom(\ad^0(g), \cO_H^\times \otimes L)$ defined as
	\[
	u(y) = 0,\ u(x) = u_\psi,\ u(sx) = s(u_\psi),
	\]
	where 
	\[
	u_\psi = \frac{1}{n}\sum_{i = 1}^n \sigma_i(\varepsilon) \otimes \psi(\sigma_i^{-1})
	\]
	generates $\Hom_{G_\Q}(\ad^0(g),\cO_H^\times \otimes L)$.
\end{proposition}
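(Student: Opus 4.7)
The proposition contains two separate assertions: that $u$ is a well-defined element of $\Hom_{G_\Q}(\ad^0(g),\cO_H^\times\otimes L)$, and that it generates this $L$-line. Since the dimension of the Hom space was already computed to be $1$ in the proof of Proposition 3.3 (the contribution of $\chi_K$ vanishes because $\cO_K^\times$ is finite, while $\Ind_{G_K}^{G_\Q}(\psi)$ has multiplicity $1$ by Dirichlet and the fact that $\varrho_g$ is odd), once well-definedness is established the generation claim reduces to showing $u\neq 0$.

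The plan for well-definedness is to treat $u$ via Frobenius reciprocity. Since $u(y)=0$ and $\chi_K$ is a $G_\Q$-stable line, the only content is the restriction of $u$ to $\Ind_{G_K}^{G_\Q}(\psi)$, which on the basis $\{x,sx\}$ sends $x\mapsto u_\psi$ and $sx\mapsto s(u_\psi)$. By the universal property of induction, this extends to a $G_\Q$-equivariant map iff $u_\psi$ is a $\psi$-eigenvector for $G_K$ acting on $\cO_H^\times\otimes L$. That check is direct: for $\tau\in G_K$ with image $\bar\tau\in\Gal(H/K)$,
\[
\tau(u_\psi)=\frac{1}{n}\sum_i\psi(\sigma_i^{-1})\,\bar\tau\sigma_i(\varepsilon)=\frac{1}{n}\sum_j\psi(\bar\tau)\psi(\sigma_j^{-1})\,\sigma_j(\varepsilon)=\psi(\tau)\,u_\psi,
\]
after reindexing $\sigma_j=\bar\tau\sigma_i$ and using that $\psi$ is a character. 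Membership in $\cO_H^\times\otimes L$ is clear since $\varepsilon$ is a global unit of $H_1$.

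For non-vanishing of $u$, the key observation is that $\varepsilon\in\cO_{H_1}^1$ is fixed by $\Delta_{\fp_1}=\langle s\rangle$, so $s\varepsilon=\varepsilon$. Writing $s\sigma_i=\sigma_i^{s}\,s$ with $\sigma_i^s=s\sigma_is^{-1}\in\Gal(H/K)$ and using $\psi^s=\psi^{-1}$ (which follows from the definition $\psi=\psi_g/\psi_g'$), a short reindexing gives
\[
s(u_\psi)=\frac{1}{n}\sum_i\psi(\sigma_i^{-1})\,\sigma_i^s(\varepsilon)=\frac{1}{n}\sum_j\psi(\sigma_j)\,\sigma_j(\varepsilon)=:u_{\psi^{-1}}.
\]
Comparing with the idempotent $\theta=\frac{1}{n}\sum_i(\psi+\psi^{-1})(\sigma_i)\sigma_i$, one sees that $\theta(\varepsilon)=u_\psi+u_{\psi^{-1}}=u_\psi+s(u_\psi)$. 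Since $\theta(\varepsilon)\neq 0$ by the very choice of $\varepsilon$ (Definition \ref{defvarepsilon}), we cannot have $u_\psi=0$, hence $u\neq 0$, and by the dimension count $u$ generates $\Hom_{G_\Q}(\ad^0(g),\cO_H^\times\otimes L)$.

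The least automatic step is the equivariance check for $s$, which is where the specific normalization $\varepsilon\in H_1$ is used; everything else is formal manipulation of the Frobenius reciprocity isomorphism. I would present the argument in exactly the order above: verify the $G_K$-equivariance of $u_\psi$, note that extending via $sx\mapsto s(u_\psi)$ then automatically yields a $G_\Q$-equivariant map, and finally deduce $u\neq 0$ from the identity $\theta(\varepsilon)=u_\psi+s(u_\psi)$.
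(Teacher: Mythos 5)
Your argument is correct and follows the same route as the paper's own proof: both establish $G_\Q$-equivariance from the fact that $u_\psi$ is a $\psi$-eigenvector (the paper compresses this to ``$u_\psi$ is defined using the idempotent of $\psi$, hence it is plain to see''), and both deduce non-vanishing from the identity $u(x+sx)=u_\psi+s(u_\psi)=\theta(\varepsilon)\neq 0$, combined with the one-dimensionality of the target from the proof of Proposition \ref{dimLHS}. You have simply filled in the Frobenius-reciprocity details and the reindexing computation that the paper leaves implicit.
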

\begin{proof}
	The element $u_\psi$ is defined using the idempotent of $\psi$. Hence, it is plain to see that $u$ is $G_{\Q}$-equivariant. In order to see that it generates $\Hom_{G_\Q}(\ad^0(g), \cO_H^\times \otimes L)$ we need to see that $u$ is nonzero. For that, observe that $u(x+sx) = \theta(\varepsilon)$ which is non-zero by Definition \ref{defvarepsilon}.
\end{proof}


In order to define $v$ we will proceed in a similar way. Note that in the definition of $u$ it was important that $\theta(\varepsilon) \neq 0$. The next lemma is the analogous result that we need for $\pi$. Note that $\cO_H^\times \otimes L$ can be viewed as a subspace of $\cO_H[1/p]^\times/p^\Z \otimes L$.

\begin{lemma}\label{thetapineq0}
	We have that $\theta(\pi) \not \in \cO_H^\times \otimes L$ as an element of $\cO_H[1/p]^\times/p^\Z \otimes L$.
\end{lemma}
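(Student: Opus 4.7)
The plan is to detect non-units via valuations at the primes of $H$ above $p$. The key observation is that the short exact sequence
\[
0 \longrightarrow \cO_H^\times \otimes L \longrightarrow \cO_H[1/p]^\times/p^\Z \otimes L \xrightarrow{\ \bigoplus_j v_{\fp_j}\ } \left(\bigoplus_{j=1}^n \Z\right)\!\Big/\Z\!\cdot\!(1,\ldots,1) \otimes L
\]
is exact on the left (using that $\cO_H^\times \cap p^\Z = \{1\}$ and that $p$ is unramified in $H$, so $v_{\fp_j}(p)=1$ for all $j$). Consequently, an element of $\cO_H[1/p]^\times/p^\Z \otimes L$ lies in $\cO_H^\times \otimes L$ if and only if its valuations at $\fp_1,\ldots,\fp_n$ are all equal in $L$. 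So I will show that the valuations of $\theta(\pi)$ at the primes above $p$ are not all the same.

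First I compute the factorization of $\pi$. By Definition~\ref{defpi} one has $\pi\cO_{H_1} = (\fp_1')^{kh}$. Since $H_1$ is the decomposition field of $\fp_1$ in $H/\Q$, the prime $\fp_1'$ is the unique prime of $H$ above itself and $\fp_1'\cO_H = \fp_1$ (the extension $H/H_1$ is totally inert at $\fp_1'$, of degree $|\Delta_{\fp_1}|$). Therefore $\pi\cO_H = \fp_1^{kh}$, and applying any $\sigma \in \Delta$ yields $\sigma(\pi)\cO_H = \sigma(\fp_1)^{kh}$. Choosing the coset representatives $\sigma_i \in \Gal(H/K)$ with $\sigma_i(\fp_1) = \fp_i$ (these are coset representatives of $\Delta_{\fp_1}$ in $\Delta$ by Remark~\ref{sizedec=2} and the fact that $K\not\subset H_1$, since $p$ is inert in $K$ while $\fp_1'$ has residue degree $1$), this gives
\[
v_{\fp_j}(\sigma_i(\pi)) = kh\cdot \delta_{ij}.
\]

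Now I substitute into the explicit formula for $\theta$ recalled just above the statement, obtaining
\[
v_{\fp_j}(\theta(\pi)) \;=\; \frac{1}{n}\sum_{i=1}^n (\psi+\psi^{-1})(\sigma_i)\, v_{\fp_j}(\sigma_i(\pi)) \;=\; \frac{kh}{n}\,(\psi+\psi^{-1})(\sigma_j).
\]
These numbers are independent of $j$ if and only if the class function $\psi+\psi^{-1}$ is constant on $\Gal(H/K)$. Its value at the identity is $2$, so constancy would force $\psi(\sigma)+\psi^{-1}(\sigma)=2$, and hence $\psi(\sigma)=1$, for every $\sigma \in \Gal(H/K)$. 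This contradicts the standing hypothesis that $\varrho = \Ind_{G_K}^{G_\Q}(\psi)$ is irreducible (which rules out $\psi$ trivial, and indeed forces $\psi^2\neq 1$). Thus the valuations $v_{\fp_j}(\theta(\pi))$ are not all equal, and $\theta(\pi)\notin \cO_H^\times\otimes L$, as required.

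There is no serious obstacle: the only points requiring care are the computation of the splitting of $\fp_1'$ in $H/H_1$ (which is immediate from $H_1$ being the decomposition field) and the verification that $K\not\subset H_1$ so that the $\sigma_i \in \Gal(H/K)$ really do yield coset representatives for $\Delta_{\fp_1}$ in $\Delta$.
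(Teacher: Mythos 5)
Your proof is correct, but it takes a genuinely different route from the paper's. The paper's proof (by reference to the proof of Lemma \ref{thetapineq0KQIpsplit}) is a soft dimension count: since the global units together with the $\Delta$-orbit of $\pi$ generate $\cO_H[1/p]^\times/p^\Z\otimes L$, the hypothesis $\theta(\pi)\in\cO_H^\times\otimes L$ would force the $\varrho$-isotypic components of $\cO_H^\times\otimes L$ and of $\cO_H[1/p]^\times/p^\Z\otimes L$ to coincide, contradicting Dirichlet's $S$-unit theorem, which gives these dimensions $1$ and $2$ respectively. Your argument instead detects the failure of unithood directly via the valuation exact sequence, reducing the lemma to the elementary observation that $j\mapsto(\psi+\psi^{-1})(\sigma_j)$ is not constant on $\Gal(H/K)$ when $\psi\neq 1$, which holds since $\varrho=\Ind_{G_K}^{G_\Q}(\psi)$ is assumed irreducible in this section. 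Each approach buys something: the paper's recycles a dimension count it already needs elsewhere and is uniform across the several cases treated in the paper, while yours is self-contained and more explicit, exhibiting the precise obstruction rather than appealing to a counting argument. One small slip worth correcting: by Definition \ref{defpi} we have $\pi=(\pi')^N$ with $N=\Norm_{H/\Q}(\fp_1)-1$, so $\pi\cO_H=\fp_1^{khN}$ rather than $\fp_1^{kh}$; this only rescales the valuation vector by the nonzero constant $N$ and does not affect the conclusion.
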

\begin{proof}
	The proof is essentially the same as the proof of Lemma \ref{thetapineq0KQIpsplit} and it follows from the fact that 
	\[
	\dim_{L}\Hom_{G_\Q}(\ad^0(g),\cO_H^\times \otimes L) = 1, \quad \dim_{L}\Hom_{G_\Q}(\ad^0(g), \cO_H[1/p]^\times/p^\Z \otimes L) = 2.
	\]
\end{proof}

And now we can give $v$.

\begin{proposition}\label{vQIpinert}
	Let $v \in \Hom(\ad^0(g), \cO_H[1/p]^\times/p^{\Z} \otimes L)$ be defined as
	\[
	v(y) = 1,\ v(x) = v_\psi,\ v(sx) = s(v_\psi),
	\]
	where 
	\[
	v_\psi = \frac{1}{n}\sum_{i = 1}^n \sigma_i(\pi) \otimes \psi(\sigma_i^{-1}).
	\]
	and let $u$ be as in Proposition \ref{uKQIpinert}. Then, $v$ is $G_{\Q_p}$-equivariant and $\{u,v\}$ is a basis of the  $\Hom_{G_\Q}(\ad^0(g),\cO_H[1/p]^\times/p^\Z \otimes L)$. 
\end{proposition}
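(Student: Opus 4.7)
The proposition asserts two things: that $v$ is $G_\Q$-equivariant (so that it defines an element of $\Hom_{G_\Q}(\ad^0(g),\cO_H[1/p]^\times/p^\Z\otimes L)$), and that $\{u,v\}$ is a basis of this two-dimensional space. I will handle these in turn, following the pattern of Propositions \ref{uKQIpinert} and \ref{vQIpsplit}.

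For equivariance, my plan is to split along the decomposition $\ad^0(g)=\chi_K\oplus\Ind_{G_K}^{G_\Q}(\psi)$. On the $\chi_K$-summand generated by $y$, the assignment $v(y)=1$ is automatically $G_\Q$-equivariant since $\chi_K(g)\cdot 1=1$ for all $g\in G_\Q$; this matches the fact, checkable via Frobenius reciprocity together with the nontriviality of $\chi_K|_{\Delta_{\fp_1}}$, that the $\chi_K$-isotypical component of $\cO_H[1/p]^\times/p^\Z\otimes L$ is actually zero in the inert case, so no other choice is available. On the induced summand, the element $v_\psi$ is the image of $\pi$ under the $\psi$-idempotent in $L[\Gal(H/K)]$, so $\gamma(v_\psi)=\psi(\gamma)v_\psi$ for $\gamma\in G_K$, giving $v(\gamma x)=\gamma\cdot v(x)$; the definition $v(sx):=s(v_\psi)$ then extends this equivariantly to the other coset in the decomposition $G_\Q = G_K\sqcup G_K s$, exactly as for $u$ in Proposition \ref{uKQIpinert}.

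For the basis claim, the key move is to pair both $u$ and $v$ against the element $x+sx\in\ad^0(g)$. Using that $\pi\in H_1=H^{\Delta_{\fp_1}}$, so $s(\pi)=\pi$, together with the fact that $\psi$ is inverted by conjugation by $\tau$ (and hence by $s$, which lies in the same $G_K$-coset as $\tau$ modulo inner conjugation by $G_K$, under which a character is invariant), a short reindexing of the defining sum shows $v_\psi+s(v_\psi)=\theta(\pi)$. The identical computation with $\varepsilon$ replacing $\pi$ gives $u_\psi+s(u_\psi)=\theta(\varepsilon)$. Consequently $u(x+sx)=\theta(\varepsilon)$ and $v(x+sx)=\theta(\pi)$.

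Suppose now $\alpha u+\beta v=0$ with $\alpha,\beta\in L$; evaluating at $x+sx$ yields $\alpha\,\theta(\varepsilon)+\beta\,\theta(\pi)=0$, so $\beta\,\theta(\pi)=-\alpha\,\theta(\varepsilon)\in\cO_H^\times\otimes L$. Lemma \ref{thetapineq0} forces $\beta=0$, and then the nonvanishing $\theta(\varepsilon)\ne 0$ from Definition \ref{defvarepsilon} forces $\alpha=0$. Combined with the dimension equality $\dim_L\Hom_{G_\Q}(\ad^0(g),\cO_H[1/p]^\times/p^\Z\otimes L)=2$ recalled in Section \ref{SectionLan}, this proves the basis claim. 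The one technical point that genuinely requires care is the reindexing identity $s(v_\psi)=\frac{1}{n}\sum_j\psi(\sigma_j)\sigma_j(\pi)$, which combines the invariance $s(\pi)=\pi$ with the inversion $\psi(s\sigma_i s^{-1})=\psi^{-1}(\sigma_i)$ and a change of summation variable; once this identity is in hand, everything else reduces to formal bookkeeping with idempotents.
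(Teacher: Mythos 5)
Your proposal is correct and follows essentially the same route as the paper: equivariance is checked via the idempotent/induced-representation structure exactly as for $u$, the identity $v(x+sx)=\theta(\pi)$ (resp.\ $u(x+sx)=\theta(\varepsilon)$) is the key evaluation, and linear independence follows from Lemma \ref{thetapineq0} together with $\theta(\varepsilon)\neq 0$ and the dimension count. The paper's proof is just a terse version of this; your reindexing computation using $s(\pi)=\pi$ and $\psi(s\sigma s^{-1})=\psi^{-1}(\sigma)$ correctly fills in the step the paper leaves implicit.
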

\begin{proof}
	As in the proof of Proposition \ref{uKQIpinert} we see that $v$ is $G_\Q$-equivariant and that $v(x + sx) = \theta(\pi)$. To conclude, the fact that $u$ and $v$ are linearly independent follows from Lemma \ref{thetapineq0}.
\end{proof}
We are left with computing $u_1, v_1, u_{\beta/\alpha}, v_{\beta/\alpha}$. Recall the definition of $e_1, e_{\beta/\alpha}$ given in Definition \ref{defe}. It is a computation to check that, after a scaling of $x$ and $y$, if needed, we can choose $e_1 = x + sx$ and $e_{\beta/\alpha} = (sx - x) + y$. 

\begin{corollary}\label{u_iv_i}
	Let $u$ and $v$ be as in Proposition \ref{uKQIpinert} and Proposition \ref{vQIpinert}. Let $e_1$ and $e_{\beta/\alpha}$ be as above. Following the notation introduced below we have
	\[
	u_1 = \frac{1}{n}\sum_{i = 1}^n \sigma_i(\varepsilon)\otimes(\psi + \psi^{-1})(\sigma_i) = \theta(\varepsilon), \quad u_{\beta/\alpha} = \frac{1}{n}\sum_{i = 1}^n \sigma_i(\varepsilon)\otimes(\psi - \psi^{-1})(\sigma_i).
	\]
	Similarly,
	\[
	v_1 = \frac{1}{n}\sum_{i = 1}^n \sigma_i(\pi)\otimes(\psi + \psi^{-1})(\sigma_i) = \theta(\pi), \quad v_{\beta/\alpha} = \frac{1}{n}\sum_{i = 1}^n \sigma_i(\pi)\otimes(\psi - \psi^{-1})(\sigma_i).
	\]
\end{corollary}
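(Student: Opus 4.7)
The plan is to evaluate $u_1$, $u_{\beta/\alpha}$, $v_1$, $v_{\beta/\alpha}$ directly from the definitions of $u$, $v$, and the expressions $e_1 = x + sx$ and $e_{\beta/\alpha} = (sx-x) + y$, and then to collapse the resulting sums into the desired idempotent form. By $L$-linearity, $G_\Q$-equivariance, and $u(y) = 0$, I would first write
\[
u_1 = u(x) + u(sx) = u_\psi + s(u_\psi), \qquad u_{\beta/\alpha} = s(u_\psi) - u_\psi,
\]
and the same expansions with $v_\psi$ in place of $u_\psi$ handle the $v$-side, since $v(y) = 1$ is the identity of the multiplicative group $\cO_H[1/p]^\times/p^\Z$ and hence vanishes after tensoring with $L$.

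The crux is the explicit evaluation of $s(u_\psi)$ (and $s(v_\psi)$). Since $\varepsilon \in \cO_{H_1}^1$ with $H_1 = H^{\Delta_{\fp_1}}$ and $s \in \Delta_{\fp_1}$, we have $s(\varepsilon) = \varepsilon$, so $s\sigma_i(\varepsilon) = (s\sigma_i s^{-1})(\varepsilon)$. Because $\Gal(H/K)$ is normal in $\Delta$, the assignment $\sigma_i \mapsto \sigma_i^s := s\sigma_i s^{-1}$ permutes $\{\sigma_1, \dots, \sigma_n\}$; reindexing the defining sum for $u_\psi$ along this permutation, combined with the key identity $\psi^s = \psi^{-1}$, turns $s(u_\psi)$ into $\frac{1}{n}\sum_i \sigma_i(\varepsilon) \otimes \psi(\sigma_i)$. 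Adding this to $u_\psi$ collects the coefficients into $(\psi + \psi^{-1})(\sigma_i)$, which by the very definition of $\theta$ gives $u_1 = \theta(\varepsilon)$; subtracting instead yields the claimed expression for $u_{\beta/\alpha}$. The computation for $v$ is word-for-word the same using $\pi \in H_1$ from Definition \ref{defpi}, which again forces $s(\pi) = \pi$.

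The one substantive point is $\psi^s = \psi^{-1}$. Since $p$ is inert in $K$, the elements $s$ and $\tau$ project to the same non-trivial element of $\Gal(K/\Q) = \Delta/\Gal(H/K)$, so they differ by an element of $\Gal(H/K)$. As $\psi$ is a $1$-dimensional character and thus satisfies $\psi(g\sigma g^{-1}) = \psi(\sigma)$ for any $g \in \Gal(H/K)$, this forces $\psi^s = \psi^\tau$; and $\psi^\tau = \psi_g'/\psi_g = \psi^{-1}$ is immediate from the definition $\psi = \psi_g/\psi_g'$. Once this identification is in hand everything else reduces to careful index bookkeeping in the character sums defining $u_\psi$ and $v_\psi$, and I expect no further obstacle.
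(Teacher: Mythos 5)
Your proposal is correct and is precisely the verification the paper leaves implicit: the corollary is stated without proof as an immediate consequence of the definitions of $u$ and $v$ in Propositions \ref{uKQIpinert} and \ref{vQIpinert} together with the identification $e_1 = x + sx$, $e_{\beta/\alpha} = (sx - x) + y$ given just before it. Your key points — that $s$ fixes $\varepsilon, \pi \in H_1 = H^{\Delta_{\fp_1}}$, that conjugation by $s$ permutes $\Gal(H/K) = \{\sigma_1, \dots, \sigma_n\}$ (Remark \ref{sizedec=2}), and that $\psi^s = \psi^\tau = \psi^{-1}$ — are exactly the bookkeeping the authors had in mind.
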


\subsection{Computation of $\mathcal L^\mathrm{Gr} (\ad^0(g_\alpha))$}
Recall that Remark \ref{padiclog} justifies that the $p$-adic logarithm gives the $\Delta$-equivariant isomorphism $\log_p: \prod_{i=1}^n U_{\fp_i,\Q_p}^1 \to \prod_{i = 1}^n H_{\fp_i}$. From now on we will work with the image of this map. Let $w$ be as in Definition \ref{defwKQIpinert}. We will use Corollary \ref{projidempotent} to determine the value of $x \in 1 + p\Z_p$ in terms of $u_1,\ u_{\beta/\alpha},\ v_1$ and $v_{\beta/\alpha}$ which allows us to compute $\mathcal L^{\mathrm{Gr}}(\ad^0(g_\alpha))$ modulo $\Q^\times$ and compare it with $\mathcal L^{\mathrm{an}}(\ad^0(g_\alpha))$.


\begin{lemma}
	Let $b\in \Q_p$ be as in Corollary \ref{projidempotent}. Then, for every $j = 1, \dots , n$ we have
	\begin{equation}\label{conditionsj}
	(\psi + \psi^{-1})(\sigma_j) \log_p(x) - \sum_{i = 2}^n (\psi + \psi^{-1})(\sigma_j\sigma_i) \log_p(\sigma_i\pi) =bn\log_p(\sigma_j^{-1}\theta\varepsilon).
	\end{equation}
\end{lemma}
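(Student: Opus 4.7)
The plan is to reduce the lemma to a component-wise calculation starting from the identity
\[
\theta(w)=b\bigl(\theta(\varepsilon),\dots,\theta(\varepsilon)\bigr) \in \prod_{i=1}^n U_{\fp_i,\Q_p}^1
\]
supplied by Corollary \ref{projidempotent}. Applying $\log_p$ componentwise (which is $\Delta$-equivariant by Remark \ref{padiclog}) converts this into
\[
\theta\bigl(\log_p(w)\bigr) = b\bigl(\log_p\theta(\varepsilon),\dots,\log_p\theta(\varepsilon)\bigr) \in \prod_{i=1}^n H_{\fp_i}.
\]
The lemma will then follow from reading off the $j$-th component of both sides, where for each side we need a clean description of the $\Delta$-action.

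The first step is to transport everything to $H_{\fp_1}^n$ via the canonical isomorphism $\prod_{i=1}^n H_{\fp_i} \simeq \Ind_{\Delta_{\fp_1}}^{\Delta}(H_{\fp_1})$: a tuple $(a_1,\dots,a_n)$ with $a_i\in H_{\fp_i}$ corresponds to the tuple $(\sigma_1^{-1}a_1,\dots,\sigma_n^{-1}a_n)$ with entries in $H_{\fp_1}$. In this model, $\log_p(w)$ becomes
\[
(\log_p(x),\,-\log_p(\sigma_2^{-1}\pi),\dots,-\log_p(\sigma_n^{-1}\pi)),
\]
and the $\Delta$-action is simply the permutation induced by the left multiplication on the set of cosets $\Delta/\Delta_{\fp_1}$, with trivial Galois action on the values. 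Using Remark \ref{sizedec=2} together with the fact that $\Gal(H/K)$ is a complete set of coset representatives for $\Delta/\Delta_{\fp_1}$ (this is where I would emphasize that $p$ is inert in $K$, so $\Delta_{\fp_1}\cap\Gal(H/K)=\{1\}$), for each $k,j\in\{1,\dots,n\}$ there is a unique $m\in\{1,\dots,n\}$ such that $\sigma_m=\sigma_k^{-1}\sigma_j$ in $\Gal(H/K)$, and the $j$-th coordinate of $\sigma_k\cdot\log_p(w)$ is the $m$-th coordinate of $\log_p(w)$.

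The second step is the reindexing. When $k=j$ we have $m=1$ and obtain $\log_p(x)$; when $k\neq j$ we have $m\geq 2$ and obtain $-\log_p(\sigma_m^{-1}\pi)=-\log_p(\sigma_j^{-1}\sigma_k\pi)$. Applied to $\theta=\tfrac{1}{n}\sum_k(\psi+\psi^{-1})(\sigma_k)\sigma_k$, the $j$-th coordinate of $\theta(\log_p(w))$ becomes
\[
\frac{1}{n}\Bigl[(\psi+\psi^{-1})(\sigma_j)\log_p(x)-\sum_{k\neq j}(\psi+\psi^{-1})(\sigma_k)\log_p(\sigma_j^{-1}\sigma_k\pi)\Bigr].
\]
Substituting $\sigma_k=\sigma_j\sigma_i$ (so that $i$ runs over $\{2,\dots,n\}$ as $k$ runs over $\{1,\dots,n\}\setminus\{j\}$) turns the sum into $\sum_{i=2}^n(\psi+\psi^{-1})(\sigma_j\sigma_i)\log_p(\sigma_i\pi)$. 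Meanwhile the right-hand side has $j$-th coordinate $b\log_p(\sigma_j^{-1}\theta\varepsilon)$ by the same transport, and multiplying through by $n$ gives the stated identity \eqref{conditionsj}.

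The content is essentially algebraic bookkeeping, so the main pitfall is not any hidden theorem but rather the careful identification of the two models of the induced representation, and in particular verifying that the action of $\sigma_k$ permutes coordinates without a nontrivial Galois twist once one works in $H_{\fp_1}^n$. The crucial ingredient that makes this clean is the semidirect-product decomposition $\Delta=\Gal(H/K)\cdot\Delta_{\fp_1}$ with trivial intersection, which is precisely the input provided by $p$ being inert in $K$.
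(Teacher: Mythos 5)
Your proposal is correct and is essentially the paper's own (very terse) proof: the paper simply says to compute $\theta(\log_p(w))$ explicitly, take the $j$-th component and apply $\sigma_j^{-1}$, which is exactly the bookkeeping you carry out by transporting everything to $H_{\fp_1}^n$ at the outset. The one phrase to tighten is the blanket claim that the $\Delta$-action on the transported model is "permutation with trivial Galois action on the values" --- this is only true for the elements $\sigma_k \in \Gal(H/K)$ (a general element of $\Delta$ picks up a nontrivial $\Delta_{\fp_1}$-twist on the entries), but since $\theta$ is supported on $\Gal(H/K)$ and you verify precisely that case using Remark \ref{sizedec=2} and the trivial intersection $\Gal(H/K)\cap\Delta_{\fp_1}=\{1\}$, the argument is complete.
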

\begin{proof}
	By Corollary \ref{projidempotent} we have $\theta(\log_p(w)) = b (\log_p(\theta\varepsilon), \dots, \log_p(\theta\varepsilon))$. Then, the desired expression is obtained computing explicitly $\theta(\log_p(w))$, taking the $j$th component (i.e. the component corresponding to $H_{\fp_j} = \sigma_j H_{\fp_1}$) and applying $\sigma_j^{-1}$ to both sides.
\end{proof}

Now we will use \eqref{conditionsj} for $j = 2, \dots, n$ to determine $b$ as above. This value will depend on $u_{\beta/\alpha}$ and $v_{\beta/\alpha}$. 

\begin{proposition}\label{expressionb}
	Let $b$ be as in Corollary \ref{projidempotent}. Then,
	\[
	b = -\frac{\log_p(v_{\beta/\alpha})}{\log_p(u_{\beta/\alpha})}.
	\]
\end{proposition}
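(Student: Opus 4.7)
The plan is to eliminate the unknown $\log_p(x)$ from the linear system \eqref{conditionsj} by taking a suitable $L$-linear combination of these equations, and then to identify the remaining terms with $\log_p(u_{\beta/\alpha})$ and $\log_p(v_{\beta/\alpha})$ using Corollary \ref{u_iv_i}. The natural recipe is to multiply equation \eqref{conditionsj} by $(\psi-\psi^{-1})(\sigma_j)$ and sum over $j = 1,\dots,n$; this is the projection onto the $\psi-\psi^{-1}$ isotypical component. The coefficient of $\log_p(x)$ in the resulting expression is
\[
\sum_{j=1}^n (\psi+\psi^{-1})(\sigma_j)(\psi-\psi^{-1})(\sigma_j) = \sum_{j=1}^n \bigl(\psi^2-\psi^{-2}\bigr)(\sigma_j),
\]
which vanishes by orthogonality of characters of $\Gal(H/K)$, provided $\psi^2 \neq 1$. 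This last condition is ensured by the assumed irreducibility of $\varrho = \Ind_{G_K}^{G_\Q}(\psi)$, which forces $\psi \neq \psi^{-1}$.

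For the remaining terms on the left side of \eqref{conditionsj}, I would expand $(\psi+\psi^{-1})(\sigma_j\sigma_i)$ into four pieces and apply $\sum_j \psi^{\pm 2}(\sigma_j)=0$ together with $\sum_j 1 = n$; this gives the identity
\[
\sum_{j=1}^n(\psi-\psi^{-1})(\sigma_j)(\psi+\psi^{-1})(\sigma_j\sigma_i)=-n(\psi-\psi^{-1})(\sigma_i).
\]
Substituting back, the left-hand side of the summed system becomes $n\sum_{i=1}^n(\psi-\psi^{-1})(\sigma_i)\log_p(\sigma_i\pi)$, where the missing $i=1$ term has been silently reinserted using $(\psi-\psi^{-1})(1)=0$. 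By the formula for $v_{\beta/\alpha}$ in Corollary \ref{u_iv_i}, this collapses to $n^2\log_p(v_{\beta/\alpha})$.

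For the right-hand side, I would insert the explicit shape $\theta=\frac{1}{n}\sum_l (\psi+\psi^{-1})(\sigma_l)\sigma_l$ and reindex $\sigma_l=\sigma_j\sigma_m$ to obtain
\[
\log_p(\sigma_j^{-1}\theta\,\varepsilon)=\tfrac{1}{n}\sum_m(\psi+\psi^{-1})(\sigma_j\sigma_m)\log_p(\sigma_m\varepsilon).
\]
The same orthogonality identity as above, now with $\varepsilon$ in place of $\pi$, reduces $bn\sum_j(\psi-\psi^{-1})(\sigma_j)\log_p(\sigma_j^{-1}\theta\varepsilon)$ to $-bn^2\log_p(u_{\beta/\alpha})$. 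Equating the two sides yields $b=-\log_p(v_{\beta/\alpha})/\log_p(u_{\beta/\alpha})$, as claimed. The only real obstacle is keeping the character bookkeeping organised; one also needs $\log_p(u_{\beta/\alpha})\neq 0$ for the final division to make sense, but this is implicit in the earlier setup, as it is exactly the non-vanishing of the regulator appearing in Theorem \ref{main1'}.
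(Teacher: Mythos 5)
Your argument is correct and is essentially the paper's own proof: both eliminate $\log_p(x)$ by pairing the system \eqref{conditionsj} against the coefficients $(\psi-\psi^{-1})(\sigma_j)$ (the paper uses $(\psi-\psi^{-1})(\sigma_j^{-1})/n$ and sums over $j\geq 2$, a cosmetic difference), kill the $\log_p(x)$ term via $\sum_j\psi^{\pm 2}(\sigma_j)=0$, which holds because irreducibility of $\Ind_{G_K}^{G_\Q}(\psi)$ forces $\psi^2\neq 1$, and then identify the two remaining sums with $\log_p(v_{\beta/\alpha})$ and $\log_p(u_{\beta/\alpha})$ via Corollary \ref{u_iv_i}. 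The bookkeeping in your orthogonality identity checks out, so no changes are needed.
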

\begin{proof}
	Using Corollary \ref{u_iv_i}, we have 
	\[
	\log_p(u_{\beta/\alpha}) = \frac{1}{n}\sum_{j = 2}^n (\psi - \psi^{-1})(\sigma_j^{-1})\log_p(\sigma_j^{-1}\varepsilon)
	\]
	and a similar expression holds for $\log_p(v_{\beta/\alpha})$ as well.
	Now, for $j = 2, \dots, n$ we take \eqref{conditionsj} and multiply both sides by $(\psi - \psi^{-1})(\sigma_j^{-1})/n$. If we add these equalities for $j = 2, \dots , n$ the right hand side equals to 
	\[
	bn\log_p(u_{\beta/\alpha}),
	\]
	where we used that $\theta(u_{\beta/\alpha}) = u_{\beta/\alpha}$.
	Now we compute the left hand side. Note that
	\[
	\sum_{j = 2}^n (\psi^{-1}- \psi)(\sigma_j)(\psi + \psi^{-1})(\sigma_j)\log_p(x) = \sum_{j = 2}^n \left(\psi^{-2}(\sigma_j) - \psi^2(\sigma_j)\right) \log_p(x) = 0.
	\]
	Hence, the left hand side is
	\[
	-\frac{1}{n}\sum_{j = 2}^n \sum_{i = 2}^n (\psi^{-1} - \psi)(\sigma_j)(\psi + \psi^{-1})(\sigma_j\sigma_i)\log_p(\sigma_i \pi)
	\]
	which equals to
	\begin{multline*}
	-\frac{1}{n}\sum_{i = 2}^n \sum_{j = 2}^n \left(\psi(\sigma_i) - \psi^{-1}(\sigma_i) + \psi(\sigma_j)^{-2}\psi(\sigma_i)^{-1} - \psi(\sigma_j)^2\psi(\sigma_i)\right)\log_p(\sigma_i\pi) = \\ = -(n-1)\log_p(v_{\beta/\alpha}) - \frac{1}{n}\sum_{i = 2}^n \left(\left(\psi(\sigma_i)^{-1} - \psi(\sigma_i)\right)\left(\sum_{j = 2}^n \psi(\sigma_j)^2\right)\log_p(\sigma_i\pi)\right).
	\end{multline*}
	Note that $\sum_{i = 2}^n \psi(\sigma_j)^2 = -1$. Indeed, since $\Ind_{G_K}^{G_\Q}(\psi)$ is irreducible, there exists $j$ such that $\psi^2(\sigma_j) \neq 1$ which implies that $\sum_{i = 1}^n \psi^2(\sigma_j) = 0$. Hence, the expression above equals to $-n\log_p(v_{\beta/\alpha})$ and the result follows.
\end{proof}

And now we can use the previous proposition and \eqref{conditionsj} for $j = 1$ to find an expression for $x$ and hence for $\mathcal L^{\mathrm{Gr}}(\ad^0(g))$ up to $\Q^\times$. This allows us to prove the main result in this case.

\begin{corollary}\label{LGr=LanKQIpinert}
	Let $\{u,v\}$ be as in \S\ref{SubsectionKQIpinert}. We have 
	\[
	\mathcal{L}^{\mathrm{Gr}}(\ad^0(g_\alpha)) \equiv \log_p(v_1) - \frac{\log_p(v_{\beta/\alpha})}{\log_p(u_{\beta/\alpha})}\log_p(u_1) \quad (\mathrm{mod} \, \Q^\times).
	\]
	Thus, if $g$ satisfies Hypothesis (A1-2-3),
	\[
	\mathcal{L}^{\mathrm{Gr}}(\ad^0(g_\alpha)) \equiv \mathcal{L}^{\mathrm{an}}(\ad^0(g_\alpha)) \quad (\mathrm{mod} \, L^\times).
	\]
\end{corollary}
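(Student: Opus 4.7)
The plan is to combine the identity \eqref{conditionsj} with the explicit value of $b$ obtained in Proposition \ref{expressionb} to read off $\log_p(\pi x)$ modulo $\Q^\times$. By Definition \ref{defLGr}, this quantity is essentially $\mathcal{L}^{\mathrm{Gr}}(\ad^0(g_\alpha))$: since $\pi x \in \mathrm{UnivNorm}(H_\infty)$ and $x \in 1 + p\Z_p$ is a $p$-adic unit, we have $\log_p(\pi x) = \log_p(\pi) + \log_p(x)$ and $\ord_p(\pi x) = \ord_p(\pi) \in \Q^\times$, so the denominator in Definition \ref{defLGr} disappears modulo $\Q^\times$.

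Concretely, I would first specialise \eqref{conditionsj} to $j = 1$. Since $\sigma_1 = 1$ and $(\psi + \psi^{-1})(1) = 2$, this yields
\[
2\log_p(x) \;-\; \sum_{i = 2}^n (\psi+\psi^{-1})(\sigma_i)\log_p(\sigma_i\pi) \;=\; b n \log_p(\theta\varepsilon).
\]
Recognising via Corollary \ref{u_iv_i} that $u_1 = \theta(\varepsilon)$, and that
\[
n\log_p(v_1) \;=\; 2\log_p(\pi) \;+\; \sum_{i\geq 2}(\psi + \psi^{-1})(\sigma_i)\log_p(\sigma_i\pi),
\]
the identity above rearranges to
\[
2\bigl(\log_p(\pi) + \log_p(x)\bigr) \;=\; n\log_p(v_1) \;+\; b n \log_p(u_1).
\]
Substituting $b = -\log_p(v_{\beta/\alpha})/\log_p(u_{\beta/\alpha})$ from Proposition \ref{expressionb} and discarding the nonzero rational factor $n/2$ yields the first displayed congruence. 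For the second, it remains to write
\[
\log_p(v_1) - \frac{\log_p(v_{\beta/\alpha})}{\log_p(u_{\beta/\alpha})}\log_p(u_1) \;=\; -\,\frac{\log_p(u_1)\log_p(v_{\beta/\alpha}) - \log_p(u_{\beta/\alpha})\log_p(v_1)}{\log_p(u_{\beta/\alpha})},
\]
and invoke Theorem \ref{main1'}: the two expressions differ by the factor $-1 \in L^\times$.

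I do not expect a serious obstacle here: the core representation-theoretic work has been carried out in Proposition \ref{expressionb}, where averaging \eqref{conditionsj} against $(\psi - \psi^{-1})(\sigma_j^{-1})$ for $j \geq 2$ eliminates $\log_p(x)$ and isolates $b$, and in Corollary \ref{u_iv_i}, which identifies $u_1, v_1, u_{\beta/\alpha}, v_{\beta/\alpha}$ with the relevant $\Delta$-equivariant combinations of $\varepsilon$ and $\pi$. The only delicate step is book-keeping of the rational constants (the factor $2$ coming from $(\psi+\psi^{-1})(1)$, the factor $n$, and the factors $h$ and $N$ hidden in the definition of $\pi$), all of which are harmless because we work modulo $\Q^\times$ throughout.
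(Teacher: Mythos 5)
Your proposal is correct and follows essentially the same route as the paper: specialise \eqref{conditionsj} to $j=1$, substitute the value of $b$ from Proposition \ref{expressionb}, identify the resulting quantities with $\log_p(u_1)$ and $\log_p(v_1)$ via Corollary \ref{u_iv_i}, and conclude with Theorem \ref{main1'}, all modulo harmless rational factors. The extra care you take with $\ord_p(\pi x)$ and the constants $2$, $n$, $h$, $N$ is exactly the implicit bookkeeping in the paper's own proof.
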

\begin{proof}
	Since $x \in 1 + p\Z_p$ is such that $\pi x \in \mathrm{UnivNorm}(H_\infty)$
	\[
	\mathcal{L}^{\mathrm{Gr}}(\ad^0(g_\alpha)) \equiv  2\log_p(\pi x) = 2\log_p(x) + 2\log_p(\pi) \quad (\mathrm{mod} \, \Q^\times).
	\]
	On the other hand, taking \eqref{conditionsj} with $j = 1$ and the expression for $b$ found in Proposition \ref{expressionb}
	\[
	2\log_p(x) - \sum_{i = 2}^n (\psi + \psi^{-1})(\sigma_i)\log_p(\sigma_i\pi) = -n\frac{\log_p(v_{\beta/\alpha})}{\log_p(u_{\beta/\alpha})}\log_p(\theta\varepsilon).
	\]
	To conclude with the proof of the first statement note that by Corollary \ref{u_iv_i} we have $\log_p(u_1) = \log_p(\theta\varepsilon)$ and $\log_p(v_1) = \log_p(\theta\pi)$. The second statement follows from the first one and from Theorem \ref{main1'}.
\end{proof}

\section{The case of real quadratic fields in which $p$ is inert}\label{SectionRQpinert}

Consider the notation given in \S\ref{SectionIntroduction} and \S\ref{SectionLGr}. Consider a $1$-dimensional character $\psi_g: G_K \to \C^\times$, where $K$ is a real quadratic extension of $\Q$. Let $g$ be the theta series associated to $\psi_g$ and let $p$ be a rational prime not dividing its level that is inert in $K$. Suppose that $g$ satisfies the hypothesis stated in \S\ref{SectionLGr} and recall the definition of $\mathcal L^\mathrm{Gr}(\ad^0(g_\alpha))$ given in Definition \ref{defLGr}. In this section we compute $\mathcal L^\mathrm{Gr}(\ad^0(g_\alpha))$ modulo $\Q^\times$ for $g$. This allows us to prove our main result in this case.

We consider the exact same notation given in the introduction of \S\ref{SectionIQpinert}. By the same argument given there, we can assume that the direct summand of $\varrho_{\ad^0(g)}$ corresponding to $\Ind_{G_K}^{G_\Q}(\psi)$, denoted by $\varrho$, is irreducible. In addition, note that Remark \ref{sizedec=2} also applies for this case.

\subsection{Study of the subspace $C \subset H^1(\Q_\Sigma/\Q,V)$} \label{studyofCKQRpinert}
We proceed as in \S\ref{studyofCKQIpsplit}. Recall the natural isomorphism $H^1(\Q_\Sigma/\Q,V) \simeq \Hom_{\Delta}(\prod_{i = 1}^n U_{\fp_i,\Q_p}^1, \Ind_{G_K}^{G_\Q}(\psi))$ given in the proof of Proposition \ref{dimLHS}. Therefore, in order to determine $C$ it will be enough to determine its image (that we will also denote by $C$) by this isomorphism. 

\begin{definition}\label{deftildevarepsilon}
	Let $\tilde{\varepsilon} \in U_{\fp_1,\Q_p}^1$ be a generator of the line of $U_{\fp_1,\Q_p}^1$ where $\Delta_{\fp_1}$ acts by $\chi_K$ (note that since $p$ is inert in $K$, $\chi_K$ is a $1$-dimensional non-trivial character of $\Delta_{\fp_1}$).
\end{definition}

\begin{lemma}\label{localconditionCKQRpinert}
	Let $f \in \Hom_{\Delta}(\prod_{i = 1}^n U_{\fp_i,\Q_p}^1, \Ind_{G_K}^{G_\Q}(\psi))$ be such that $f(\tilde{\varepsilon},1,\dots, 1) = 0$. Then $f \in C$.
\end{lemma}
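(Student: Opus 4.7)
The plan is to translate the condition $f \in C$ into an explicit vanishing condition on $\lambda(f)$, via the commutative diagram from the proof of Proposition~\ref{f(w)=0KQIpinert}, and then verify that the hypothesis $f(\tilde\varepsilon, 1, \ldots, 1) = 0$ realizes this condition.

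First I would pin down the structure of $V/F^+V$. Since $p$ is inert in $K$ we have $a_p = 0$, hence $\alpha = -\beta$, and the decomposition \eqref{decVG_Qp} is degenerate on the $(-1)$-eigenspace. The $G_\Q$-stable refinement $V = \chi_K \oplus \Ind_{G_K}^{G_\Q}(\psi)$ singles out two canonical lines inside this eigenspace, and a dimension count analogous to that of Proposition~\ref{propisomorphism} forces $F^+V = \chi_K$: otherwise $F^+V$ would sit inside $\Ind_{G_K}^{G_\Q}(\psi)$, and since $H^1(\Q_\Sigma/\Q,\chi_K) = 0$ (from the computation in the proof of Proposition~\ref{dimLHS}) every $f$ is $\Ind_{G_K}^{G_\Q}(\psi)$-valued, so $\lambda(f)$ would automatically land in $H^1(\Q_p,\Q_p)$, contradicting the fact that $\lambda$ is injective modulo $W$ onto a $2$-dimensional quotient while $H^1(\Q_p,\Q_p)/W$ is only $1$-dimensional. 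Thus $V/F^+V \simeq \Ind_{G_K}^{G_\Q}(\psi)$, and because $p$ is inert in $K$ one has $\Delta_{\fp_1} \cap \Gal(H/K) = \{1\}$, so Mackey's formula gives $\Ind_{G_K}^{G_\Q}(\psi)|_{\Delta_{\fp_1}} \simeq \triv \oplus \chi_K$, matching the decomposition $U_{\fp_1,\Q_p}^1 \simeq \triv \oplus \chi_K$ used in the proof of Proposition~\ref{dimLHS}; the $\chi_K$-isotypic line of $U_{\fp_1,\Q_p}^1$ is by definition spanned by $\tilde\varepsilon$.

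With these identifications, $f \in C$ unfolds as follows. By Definition~\ref{defC} and the inclusion $W \subset H^1(\Q_p,\Q_p)$, membership in $C$ is equivalent to $\lambda(f) \in H^1(\Q_p,\Q_p)$. Under the class field theoretic description $H^1(\Q_p, V/F^+V) \simeq \Hom_{\Delta_{\fp_1}}(p^\Z \times \cO_{H_{\fp_1}}^\times, V/F^+V)$ from Proposition~\ref{dimRHS}, the subspace $H^1(\Q_p,\Q_p)$ corresponds to homomorphisms whose image lies in the trivial isotypic component of $V/F^+V$, so the condition becomes the vanishing of the $\chi_K$-isotypic part of $\lambda(f)|_{\cO_{H_{\fp_1}}^\times}$. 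By $\Delta_{\fp_1}$-equivariance and Schur's lemma this reduces to the single scalar condition $\lambda(f)(\tilde\varepsilon) = 0$, and tracing through the same commutative diagram used in the proof of Proposition~\ref{f(w)=0KQIpinert} shows $\lambda(f)(\tilde\varepsilon)$ is the projection to $V/F^+V$ of $f(\tilde\varepsilon, 1, \ldots, 1)$. Since $f$ is valued in $\Ind_{G_K}^{G_\Q}(\psi)$ and the quotient map $V \to V/F^+V$ restricts to an isomorphism there (because $F^+V = \chi_K$), this projection is the identity and so $\lambda(f)(\tilde\varepsilon) = f(\tilde\varepsilon, 1, \ldots, 1) = 0$, giving $f \in C$. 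I anticipate the delicate step to be precisely the identification $F^+V = \chi_K$ in the degenerate case $\alpha = -\beta$: the splitting \eqref{decVG_Qp} by itself does not distinguish the two $G_\Q$-stable $(-1)$-lines of $V$, and the argument genuinely needs the global-to-local compatibility encoded in Proposition~\ref{propisomorphism} to pin down which one plays the role of $F^+V$.
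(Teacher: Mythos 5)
Your operative chain is the same as the paper's: via the commutative diagram from the proof of Proposition~\ref{f(w)=0KQIpinert}, the hypothesis $f(\tilde\varepsilon,1,\dots,1)=0$ gives $\mu(\lambda(f))(\tilde\varepsilon)=0$ in $V/F^+V$; since $\tilde\varepsilon$ spans the unique line of $H_{\fp_1}^\times \hat{\otimes}\Q_p$ on which $\Delta_{\fp_1}$ acts by $\chi_K$, and $V/F^+V$ is (trivial)$\,\oplus\,$(nontrivial unramified) as a $G_{\Q_p}$-module by Hypothesis (B1) (cf.\ the proof of Proposition~\ref{dimRHS}), this forces $\mu(\lambda(f))$ to take values in the trivial isotypic part, i.e.\ $\lambda(f)\in H^1(\Q_p,\Q_p)$, whence $f\in C$ by Remark~\ref{lambda(C)=intersection}. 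That part of your write-up is correct and is exactly the paper's proof.

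However, the structural claim you single out as ``the delicate step'', namely $F^+V=\chi_K$, is false, and the dichotomy used to derive it (``either $F^+V=\chi_K$ or $F^+V\subset\Ind_{G_K}^{G_\Q}(\psi)$'') is a false premise: $F^+V=\Q_p^{\beta/\alpha}$ is the canonical line $\Hom(V_g^{\alpha},V_g^{\beta})$ attached to the two Frobenius eigenlines of $V_g$ (this is what makes \eqref{G_Qpdecomposition} canonical even when $\alpha/\beta=\beta/\alpha=-1$); it is only $G_{\Q_p}$-stable, consists of nilpotent endomorphisms of $V_g$, and is neither the $\chi_K$-line (whose generator is a semisimple endomorphism, acting by $+1$ and $-1$ on the two $G_K$-eigenlines of $V_g$) nor contained in $\Ind_{G_K}^{G_\Q}(\psi)$ --- it is a ``diagonal'' line inside the two-dimensional $\Frob_p=-1$ eigenspace. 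The paper's own basis computation in \S\ref{SectionIQpinert}, where $e_{\beta/\alpha}=(sx-x)+y$ has nonzero components in both global summands, exhibits exactly this (and the local picture in the real quadratic inert case is identical). Fortunately your proof does not need any identification of $F^+V$ with a global line: from $f(\tilde\varepsilon,1,\dots,1)=0$ the image in $V/F^+V$ vanishes trivially, whatever $F^+V$ is (you invoke the injectivity of $\Ind_{G_K}^{G_\Q}(\psi)\to V/F^+V$ only to upgrade this to an equality you never use), and the remaining steps use only the $G_{\Q_p}$-module structure of $V/F^+V$, which comes from (B1). So delete the first paragraph and the parenthetical ``because $F^+V=\chi_K$'': what remains is a correct proof, essentially identical to the paper's.
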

\begin{proof}
	Recall the map
	\[
	\mu: H^1(\Q_p, V/F^+V) \to \Hom_{\Delta_{\fp_1}}(H_{\fp_1}^\times, V/F^+V)
	\]
	defined in \eqref{rescftiso}. Following a similar reasoning than the proof Proposition \ref{f(w)=0KQIpinert} we see that $\mu(\lambda(f))(\tilde{\varepsilon}) = 0$ in $V/F^+V$. Since $\tilde{\varepsilon}$ generates the unique line of $H_{\fp_1}^\times \hat{\otimes} \Q_p$ where $G_{\Q_p}$ acts by $\chi_K$, $\mu(\lambda(f)) \in \Hom_{\Delta_{\fp_1}}(H_{\fp_1}^\times, \Q_p)$ so $\lambda(f) \in H^1(\Q_p, \Q_p)$ and the result follows by Remark \ref{lambda(C)=intersection}.
\end{proof}

The previous lemma motivates the following definition. Note that by Remark \ref{sizedec=2} we can write the idempotent of $\varrho$ as
\[
\theta = \frac{1}{n} \sum_{i = 1}^n (\psi + \psi^{-1})(\sigma_i)\sigma_i.
\]
\begin{definition}
	Let $Z_C$ be the $\Delta$-equivariant subspace of $\prod_{i = 1}^n U_{\fp_i,\Q_p}^1$ generated by $\theta(\tilde{\varepsilon}, 1, \dots, 1)$.
\end{definition}

And we use $Z_C$ to describe the $1$-dimensional subspace $C$. As a consequence, we obtain a more explicit version of Proposition \ref{f(w)=0KQIpinert}.

\begin{proposition}\label{descriptionCKQRpinert}
	The subspace $Z_C$ is nonzero and $C \subset \Hom_{\Delta}(\prod_{i = 1}^n U_{\fp_i,\Q_p}^1, \Ind_{G_K}^{G_\Q}(\psi))$ can be identified with
	\[
	\Hom_\Delta\left(\prod_{i = 1}^n U_{\fp_i,\Q_p}^1/ {Z_C} , \Ind_{G_K}^{G_\Q}(\psi)\right).
	\]
	Moreover, we have
	\begin{equation}\label{thetaqKQRpinert}
	\theta(w) \in Z_C.
	\end{equation}
\end{proposition}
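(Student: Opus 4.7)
My plan is to dispatch the three assertions in sequence, exploiting the facts that $\varrho = \Ind_{G_K}^{G_\Q}(\psi)$ is irreducible by assumption, that the identification $\prod_{i=1}^n U_{\fp_i,\Q_p}^1 \simeq \Ind_{\Delta_{\fp_1}}^\Delta U_{\fp_1,\Q_p}^1$ makes this into the regular representation of $\Delta$ (since $\Delta_{\fp_1}$ acts on $U_{\fp_1,\Q_p}^1$ as its regular representation), and that any $\Delta$-equivariant map with target $\varrho$ automatically factors through the idempotent projection $\theta$ (because $\varrho$ is irreducible and absent from the complementary isotypic subspace of $\prod U^1$).

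For the nonvanishing of $Z_C$, I would write out $\theta(\tilde\varepsilon,1,\ldots,1)$ explicitly. Under the identification above, this element corresponds to $\sigma_1 \otimes \tilde\varepsilon$, and $\sigma_i$ sends it to $\sigma_i \otimes \tilde\varepsilon$, i.e.\ the tuple whose $i$-th slot contains $\sigma_i(\tilde\varepsilon)$ and whose other slots are trivial. Reading off the first coordinate of $\theta(\tilde\varepsilon,1,\ldots,1)$ yields $\tfrac{1}{n}(\psi+\psi^{-1})(1)\tilde\varepsilon = \tfrac{2}{n}\tilde\varepsilon \neq 0$. Since $\varrho$ is irreducible, the cyclic $\Delta$-submodule $Z_C$ generated by this nonzero element of the $\varrho$-isotypic component of $\prod U^1$ must be isomorphic to $\varrho$; in particular $\dim_{\Q_p} Z_C = 2$.

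For the description of $C$, I would first establish the inclusion $\Hom_\Delta(\prod U^1/Z_C, \varrho) \subset C$. Any $f \in \Hom_\Delta(\prod U^1, \varrho)$ satisfies $f(\tilde\varepsilon,1,\ldots,1) = f(\theta(\tilde\varepsilon,1,\ldots,1))$ by the factorization through $\theta$, and by $\Delta$-equivariance this vanishes precisely when $f$ kills the whole $\Delta$-submodule $Z_C$. Lemma \ref{localconditionCKQRpinert} then places such an $f$ in $C$. To upgrade the containment to an equality I would compare dimensions: in the regular representation $\prod U^1$, $\varrho$ appears with multiplicity $\dim \varrho = 2$, and $Z_C$ exhausts one of these copies, so the quotient has $\varrho$-multiplicity exactly $1$. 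Schur's lemma therefore gives $\dim \Hom_\Delta(\prod U^1/Z_C, \varrho) = 1 = \dim C$, matching Corollary \ref{lambda(C)}.

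The third assertion $\theta(w) \in Z_C$ then drops out at once: picking a generator $f$ of $C$, Proposition \ref{f(w)=0KQIpinert} yields $f(\theta(w)) = f(w) = 0$, and on the four-dimensional $\varrho$-isotypic component $\varrho \oplus \varrho$ the kernel of the surjection $f$ is a two-dimensional $\Delta$-submodule containing $Z_C$; both are two-dimensional copies of $\varrho$, hence coincide, so $\theta(w) \in Z_C$. The main hazard in this plan is the two-tier multiplicity bookkeeping — $\varrho$ has multiplicity two inside $\prod U^1$, and one must be confident that the cyclic module $Z_C$ really exhausts a single copy of $\varrho$ inside the four-dimensional isotypic component rather than a diagonal summand that would spoil the dimension count; the irreducibility of $\varrho$ is what guarantees this.
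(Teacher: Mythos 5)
Your proof matches the paper's line for line: the same first-component computation $\tfrac{2}{n}\tilde\varepsilon\ne 0$ for nonvanishing, the same multiplicity bookkeeping (two copies of $\varrho$ in the regular representation $\prod U^1_{\fp_i,\Q_p}$, one exhausted by $Z_C$, giving a one-dimensional $\Hom$ that sits inside $C$ by Lemma~\ref{localconditionCKQRpinert} and must then equal it), and the same kernel/Schur argument for $\theta(w)\in Z_C$, which the paper delegates to the analogy with Corollary~\ref{projidempotent}. One soft spot you inherit from the paper: irreducibility of $\varrho$ alone does not force a nonzero cyclic $\Delta$-submodule of the $\varrho$-isotypic component $\varrho\oplus\varrho$ to be a \emph{single} copy of $\varrho$ --- identifying that isotypic block with $M_2(L)$, a cyclic left submodule $M_2(L)\cdot A$ has dimension $2\,\mathrm{rank}(A)$ and can be all of $\varrho\oplus\varrho$. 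What actually pins $Z_C\simeq\varrho$ is that $(\tilde\varepsilon,1,\dots,1)$ is a $\chi_K$-eigenvector for $\Delta_{\fp_1}$, so $Z_C$ is a quotient of the $\varrho$-isotypic part of $\Ind_{\Delta_{\fp_1}}^\Delta(\chi_K)$, and Frobenius reciprocity together with $\varrho|_{\Delta_{\fp_1}}=\mathbf{1}\oplus\chi_K$ caps that multiplicity at one; a sentence to this effect would make the dimension count airtight.
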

\begin{proof}
	We have that $\theta(\tilde{\varepsilon},1, \dots, 1) \neq 0$ since its first component is $2\tilde{\varepsilon}/n \neq 0$. Therefore $Z_C$ is a $\Delta$-equivariant subspace of $\prod_{i = 1}^n U_{\fp_i,\Q_p}^1$ isomorphic to $\Ind_{G_K}^{G_\Q}(\psi)$. On the other hand, the proof of Proposition \ref{dimLHS} shows that $\Ind_{G_K}^{G_\Q}(\psi)$ has multiplicity $2$ in $\prod_{i = 1}^n U_{\fp_i, \Q_p}^1$. Hence, $\Hom_\Delta\left(\prod_{i = 1}^n U_{\fp_i,\Q_p}^1/ {Z_C} , \Ind_{G_K}^{G_\Q}(\psi)\right)$ is naturally a $1$-dimensional subspace of $\Hom_{\Delta}(\prod_{i = 1}^n U_{\fp_i,\Q_p}^1,\Ind_{G_K}^{G_\Q}(\psi))$. Moreover, Lemma \ref{localconditionCKQRpinert} shows that this space is included in $C$. Since $C$ also has dimension $1$, they must be equal. The proof of the second statement is analogous to the proof of Corollary \ref{projidempotent}. 	
\end{proof}

We rewrite \eqref{thetaqKQRpinert} specifying which element of $Z_C$ appears in the expression for $\theta(w)$. Let $\theta_{1,\Delta_{\fp_1}} \in L[\Delta_{\fp_1}] \subset L[\Delta]$ be the idempotent of the trivial representation of $\Delta_{\fp_1}$.
\begin{definition}\label{defz1KQRpinert}
	Fix $\tilde{\sigma} \in \Delta$ such that $\tilde{\sigma}\theta(\tilde{\varepsilon},1, \dots , 1)$ has nontrivial projection on the line of $Z_C$ where $\Delta_{\fp_1}$ acts trivially (recall that $Z_C$ is isomorphic to $\Ind_{G_K}^{G_\Q}(\psi)$). Define 
	\[
	z_1 = \theta_{1,\Delta_{\fp_1}}\tilde{\sigma}\theta(\tilde{\varepsilon},1,\dots, 1).
	\]
	Note that $z_1$ is a generator of the line of $Z_C$ where $G_{\Q_p}$ acts trivially.
\end{definition}
\begin{corollary}\label{projidempotentKQRpinert}
	There exists $c \in \Q_p$ such that
	\[
	\theta(w) = c z_1.
	\]
\end{corollary}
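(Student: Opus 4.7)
My plan is to deduce this corollary directly from Proposition \ref{descriptionCKQRpinert} together with a ``trivial action'' argument, in exact parallel to the proof of Corollary \ref{projidempotent} in the inert imaginary quadratic case. The statement \eqref{thetaqKQRpinert} already places $\theta(w)$ inside $Z_C$, so the whole content of the corollary reduces to showing that $\theta(w)$ lies in the one-dimensional subspace of $Z_C$ on which $G_{\Q_p}$ acts trivially, which by Definition \ref{defz1KQRpinert} is the line generated by $z_1$.

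First I would check that $\Delta_{\fp_1}$ (equivalently $G_{\Q_p}$) fixes $w=(x,1/\pi,\dots,1/\pi)$. The first coordinate $x \in 1+p\Z_p \subset \Q_p$ is fixed by any element of $\Delta_{\fp_1}$ since $\Delta_{\fp_1}$ acts on $H_{\fp_1}$ fixing $\Q_p$. For $i\ge 2$, any $\delta \in \Delta_{\fp_1}$ permutes the primes $\fp_2,\dots,\fp_n$, and since $\pi \in H_1 = H^{\Delta_{\fp_1}}$ one has $\delta(\pi)=\pi$; thus the tuple $(1/\pi,\dots,1/\pi)$ in the remaining coordinates is merely permuted, so $w$ is indeed $\Delta_{\fp_1}$-fixed. (This is precisely the fact used at the end of the proof of Proposition \ref{f(w)=0KQIpinert}.)

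Next, I would observe that $\theta = \tfrac{1}{n}\sum_i (\psi+\psi^{-1})(\sigma_i)\sigma_i$ is, up to the normalization factor $\dim\varrho/|\Delta| = 2/(2n)=1/n$, the standard central idempotent in $L[\Delta]$ cutting out the irreducible representation $\varrho = \Ind_{G_K}^{G_\Q}(\psi)$. Since $\theta$ is central, the action of $\theta$ on $\prod_{i=1}^n U_{\fp_i,\Q_p}^1$ is $\Delta$-equivariant; in particular it sends $\Delta_{\fp_1}$-fixed elements to $\Delta_{\fp_1}$-fixed elements, so $\theta(w)$ is $G_{\Q_p}$-invariant.

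Finally, combining the two facts, $\theta(w)$ lies in $Z_C^{G_{\Q_p}}$. Since $Z_C \simeq \Ind_{G_K}^{G_\Q}(\psi)$ and, by Hypothesis (B1) together with the fact that $\chi_K$ is nontrivial on $G_{\Q_p}$ (as $p$ is inert in $K$), the unique $G_{\Q_p}$-trivial line of $V = \chi_K \oplus \Ind_{G_K}^{G_\Q}(\psi)$ is contained in the $\Ind$-summand; hence $Z_C^{G_{\Q_p}}$ is one-dimensional. By construction of $z_1$ in Definition \ref{defz1KQRpinert}, this line is spanned by $z_1$, and we conclude $\theta(w) = c\, z_1$ for some $c \in \Q_p$. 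There is essentially no obstacle here: once $w$ is seen to be $\Delta_{\fp_1}$-invariant and $\theta$ is seen to be central, the conclusion is forced by the representation-theoretic description of $Z_C$ and the explicit normalization in Definition \ref{defz1KQRpinert}.
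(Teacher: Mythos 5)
Your proof is correct and is essentially the paper's own argument: the paper's proof consists precisely of combining \eqref{thetaqKQRpinert} with the facts that $G_{\Q_p}$ acts trivially on $w$ (hence on $\theta(w)$, since $\theta$ is central) and that $z_1$ spans the $G_{\Q_p}$-trivial line of $Z_C$, as noted in Definition \ref{defz1KQRpinert}. You simply spell out the verifications (invariance of $w$, centrality of $\theta$, one-dimensionality of $Z_C^{G_{\Q_p}}$) that the paper leaves implicit.
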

\begin{proof}
	It follows from \eqref{thetaqKQRpinert} and the fact that $G_{\Q_p}$ acts trivially on $w$ and $z_1$.
\end{proof}

\subsection{Computation of $u_1$ and $v_1$}\label{SubsectionKQRpinert}
Recall $u$ and $v$ defined in \S\ref{SectionLan}. Here, we see that $u_1 = 0$ and we give an expression for $v_1$. 

Recall the definition of $e_1$ given in Definition \ref{defe}. Since $p$ is inert in $K$, we can fix $e_1$ to be a generator of the line of $\Ind_{G_K}^{G_\Q}(\psi)$ where $\Delta_{\fp_1}$ acts trivially.

\begin{proposition}\label{uKQRpinert}
	For any $u \in \Hom_{G_\Q}(\ad^0(g), \cO_H^\times \otimes L)$ we have $u_1 = 0$.
\end{proposition}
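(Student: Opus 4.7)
The plan is to decompose $\Hom_{G_\Q}(\ad^0(g), \cO_H^\times \otimes L)$ according to the $G_\Q$-decomposition $\ad^0(g) = \chi_K \oplus \Ind_{G_K}^{G_\Q}(\psi)$, and show that the summand coming from $\Ind_{G_K}^{G_\Q}(\psi)$ vanishes. Since $e_1$ is, by construction, a generator of a line in $\Ind_{G_K}^{G_\Q}(\psi)$, this immediately forces $u_1 = u(e_1) = 0$ for every $u$, regardless of its component on $\chi_K$.

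To carry this out, I would first argue via Minkowski's proof of Dirichlet's unit theorem (as recorded in item (ii) of the proof of Proposition \ref{dimLHS}) that for a nontrivial irreducible representation $\pi$ of $\Delta$, the multiplicity of $\pi$ in $\cO_H^\times \otimes L$ equals $\dim \pi^+$, where $\pi^+$ denotes the subspace fixed by complex conjugation $\tau$. Applied to $\pi = \Ind_{G_K}^{G_\Q}(\psi)$, this reduces the proposition to showing that $\tau$ acts on $\Ind_{G_K}^{G_\Q}(\psi)$ with both eigenvalues equal to $-1$.

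For that, I would combine two observations. First, since $g$ is an odd weight one form, item (iv) of the proof of Proposition \ref{dimLHS} gives that $\varrho_{\ad^0(g)}(\tau)$ has eigenvalues $1, -1, -1$. Second, since $K$ is real quadratic we have $K \subset \R$, so complex conjugation fixes $K$ and therefore $\chi_K(\tau) = 1$; thus $\tau$ contributes the eigenvalue $1$ through the $\chi_K$-summand. Comparing traces, $\tau$ must act on the complementary summand $\Ind_{G_K}^{G_\Q}(\psi)$ as $-\mathrm{id}$, yielding $\dim \bigl(\Ind_{G_K}^{G_\Q}(\psi)\bigr)^+ = 0$.

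Consequently $\Hom_{G_\Q}(\Ind_{G_K}^{G_\Q}(\psi), \cO_H^\times \otimes L) = 0$, so every $u \in \Hom_{G_\Q}(\ad^0(g), \cO_H^\times \otimes L)$ factors through the projection $\ad^0(g) \twoheadrightarrow \chi_K$. Since $e_1 \in \Ind_{G_K}^{G_\Q}(\psi)$ lies in the kernel of this projection, $u(e_1) = 0$, proving the proposition. There is no real obstacle here; the only subtlety is getting the sign of $\chi_K(\tau)$ right, which is exactly the point where the real quadratic hypothesis enters and distinguishes this case from the imaginary quadratic analogues of Propositions \ref{uKQIpsplit} and \ref{uKQIpinert} (where the $\Ind$-summand carries the non-vanishing $u$).
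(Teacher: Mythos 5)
Your proof is correct and follows the paper's approach: the paper likewise decomposes $\Hom_{G_\Q}(\ad^0(g), \cO_H^\times \otimes L)$ along $\ad^0(g) = \chi_K \oplus \Ind_{G_K}^{G_\Q}(\psi)$ and invokes the multiplicity computation in the proof of Proposition \ref{dimLHS} to conclude that $\Hom_{G_\Q}(\Ind_{G_K}^{G_\Q}(\psi), \cO_H^\times \otimes L) = 0$. You merely spell out what the paper leaves as a back-reference, namely that $\chi_K(\tau)=1$ forces $\tau$ to act by $-1$ on the whole $\Ind$-summand, so its $+1$-eigenspace (and hence its multiplicity in the unit module) is zero.
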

\begin{proof}
	We have,
	\[
	\Hom_{G_\Q}(\ad^0(g), \cO_H^\times \otimes L) = \Hom_{G_\Q}(\chi_K, \cO_H^\times \otimes L) \oplus \Hom_{G_\Q}(\Ind_{G_K}^{G_\Q}(\psi), \cO_H^\times \otimes L).
	\]
	But, as we saw in the proof Proposition \ref{dimLHS}, $\Hom_{G_\Q}(\Ind_{G_K}^{G_\Q}(\psi), \cO_H^\times \otimes L) = 0$ and therefore $u_1 = u(e_1) = 0$.
\end{proof}

And now we compute $v_1$.
\begin{proposition}\label{vQRpinert}
	We can choose $v \in \Hom_{G_\Q}(\ad^0(g), \cO_H[1/p]^\times/p^\Z \otimes L)$ such that $v_1 = v(e_1) = \theta(\pi)$. Moreover, for any $u$ as above, the elements $\{ u , v \}$ form a basis of $\Hom_{G_\Q}(\ad^0(g), \cO_H[1/p]^\times/p^\Z \otimes L)$.
\end{proposition}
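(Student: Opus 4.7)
The plan is to adapt the construction of Proposition \ref{vQIpinert} verbatim, once the correct basis is set up. Let $s$ be the non-trivial element of $\Delta_{\fp_1}$. Since $p$ is inert in $K$ and $(p)$ splits completely in $H/K$ by Remark \ref{sizedec=2}, the group $\Delta_{\fp_1}$ has order two and maps isomorphically onto $\Gal(K/\Q)$, so $s \notin \Gal(H/K)$. Following Definition \ref{baseofInd}, fix a basis $\{x, sx, y\}$ of $\ad^0(g)$ where $\{x, sx\}$ spans $\Ind_{G_K}^{G_\Q}(\psi)$ with $x$ generating the line on which $G_K$ acts by $\psi$, and $y$ generates $\chi_K$. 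Since $s$ swaps $x$ and $sx$, the $\Delta_{\fp_1}$-fixed line of $\Ind_{G_K}^{G_\Q}(\psi)$ is spanned by $x + sx$; after rescaling $x$ if necessary, we may assume $e_1 = x + sx$.

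Next, I would define $v$ by setting $v(y) = 1$, $v(x) = v_\psi$ and $v(sx) = s(v_\psi)$, where
\[
v_\psi = \frac{1}{n}\sum_{i = 1}^n \sigma_i(\pi) \otimes \psi(\sigma_i^{-1}).
\]
The verification that $v$ is $G_\Q$-equivariant is verbatim the argument of Proposition \ref{vQIpinert}: $v_\psi$ is the image of $\pi$ under the idempotent of $\psi$, so it lies in the $\psi$-eigenspace for $G_K$, and the equality $\psi^s = \psi^{-1}$ (which holds because $\psi$ is a ring class character of $K$ and $s$ restricts to the nontrivial element of $\Gal(K/\Q)$, exactly as in the imaginary quadratic inert case) places $s(v_\psi)$ in the $\psi^{-1}$-eigenspace. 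Expanding these two contributions gives the identity $v(e_1) = v_\psi + s(v_\psi) = \theta(\pi)$, which is the target expression.

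The heart of the argument is then the real-quadratic analogue of Lemma \ref{thetapineq0KQIpsplit}: one must show that $\theta(\pi) \neq 0$ in $\cO_H[1/p]^\times/p^\Z \otimes L$. I would argue by contradiction. If $\theta(\pi) = 0$, then $\theta(\sigma_i\pi) = \sigma_i\theta(\pi) = 0$ for every $i$. Since $\sigma_i(\pi)$ generates a power of $\fp_i$, by Dirichlet's $S$-unit theorem the elements $\{\sigma_i(\pi)\}_{i = 1}^n$ together with $\cO_H^\times$ span $\cO_H[1/p]^\times/p^\Z \otimes L$ as an $L$-vector space. Now the case analysis in the proof of Proposition \ref{dimLHS} (together with Proposition \ref{uKQRpinert}) shows that in the real quadratic inert setting $\varrho = \Ind_{G_K}^{G_\Q}(\psi)$ has multiplicity zero in $\cO_H^\times \otimes L$, so $\theta$ vanishes on all the generators and hence on the entire space. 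This contradicts the fact that $\dim_L \Hom_{G_\Q}(\ad^0(g), \cO_H[1/p]^\times/p^\Z \otimes L) = 2$ while $\Hom_{G_\Q}(\chi_K, \cO_H[1/p]^\times/p^\Z \otimes L)$ is only one-dimensional, which forces the $\varrho$-isotypic component to be nonzero.

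Finally, $\{u,v\}$ is a basis: Proposition \ref{uKQRpinert} and the displayed dimension computation give $u(x) = u(sx) = 0$ and $u(y) \neq 0$ (a nontrivial unit of $H$), whereas $v(y) = 1$ is trivial in $\cO_H[1/p]^\times/p^\Z \otimes L$, so $u$ and $v$ cannot be proportional, and they span the two-dimensional space. The main obstacle is the non-vanishing of $\theta(\pi)$; the rest is a transcription of the imaginary-quadratic-inert case.
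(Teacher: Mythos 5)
Your proposal is correct and follows the same route as the paper, which merely says to construct $v$ as in Corollary~\ref{u_iv_i} and to obtain linear independence by imitating Lemma~\ref{thetapineq0KQIpsplit} and Proposition~\ref{vQIpsplit}; you have simply spelled out those two citations. In particular you correctly identify that the relevant nonvanishing lemma here is the \emph{split}-case Lemma~\ref{thetapineq0KQIpsplit} (not the inert-case Lemma~\ref{thetapineq0}), since for $K$ real and $p$ inert the multiplicity of $\Ind_{G_K}^{G_\Q}(\psi)$ in $\cO_H^\times\otimes L$ is zero, so the cheap dimension count gives $\theta(\pi)\neq 0$ outright rather than merely $\theta(\pi)\notin\cO_H^\times\otimes L$.
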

\begin{proof}
	In order to show that there exists $v \in \Hom_{G_\Q}(\ad^0(g), \cO_H[1/p]^\times/p^\Z \otimes L)$ such that $v_1 = \theta(\pi)$ we proceed as in Corollary \ref{u_iv_i}. Moreover, to see that for any $u \in \Hom_{G_\Q}(\ad^0(g), \cO_H \otimes L)$ we have that $u$ and $v$ are linearly independent we proceed as in Lemma \ref{thetapineq0KQIpsplit} and Proposition \ref{vQIpsplit}
\end{proof}

\subsection{Computation of $\mathcal L^{\mathrm{Gr}}(\ad^0(g_\alpha))$} Recall that Remark \ref{padiclog} justifies that the $p$-adic logarithm gives the $\Delta$-equivariant isomorphism $\log_p: \prod_{i=1}^n U_{\fp_i,\Q_p}^1 \to \prod_{i = 1}^n H_{\fp_i}$. From now on we will work with the image of this map. Let $w$ be as in Definition \ref{defwKQIpinert} and $z_1$ as in Definition \ref{defz1KQRpinert}. We will use Corollary \ref{projidempotentKQRpinert} to determine the value of $x \in 1 + p\Z_p$.

The following lemma simplifies a lot the equation appearing in Corollary \ref{projidempotentKQRpinert}.
\begin{lemma}\label{formz1KQRpinert}
	The first component of $\log_p(z_1) \in \prod_{i = 1}^n H_{\fp_i}$ (corresponding to $H_{\fp_1}$) is trivial.
\end{lemma}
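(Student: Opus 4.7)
The plan is to compute the first component of $z_1$ directly and show it vanishes, exploiting that $\Delta_{\fp_1}$ has order $2$ and that $\chi_K$ is non-trivial on $\Delta_{\fp_1}$ (because $p$ is inert in $K$). Let $s$ denote the non-trivial element of $\Delta_{\fp_1}$, so that $\chi_K(s)=-1$ and $\theta_{1,\Delta_{\fp_1}}=\tfrac{1+s}{2}$ annihilates the $\chi_K$-eigenspace of $\Delta_{\fp_1}$ inside $H_{\fp_1}$.

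Since $s$ fixes $\fp_1$, its action on $\prod_{i=1}^n H_{\fp_i}$ preserves the first factor $H_{\fp_1}$ and only permutes the other components among themselves. Consequently, the first component of $z_1 = \theta_{1,\Delta_{\fp_1}}\tilde\sigma\theta(\tilde\varepsilon,1,\dots,1)$ equals $\theta_{1,\Delta_{\fp_1}}$ applied to the first component of $\eta := \tilde\sigma\theta(\tilde\varepsilon,1,\dots,1)$ viewed in $H_{\fp_1}$. It therefore suffices to show that this first component of $\eta$ lies in the $\chi_K$-eigenspace of $\Delta_{\fp_1}$.

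To identify it, first observe that $\sigma_j(\tilde\varepsilon,1,\dots,1)$ has $\sigma_j(\tilde\varepsilon)$ in the $j$-th slot and the identity element elsewhere, so the $i$-th component of $\theta(\tilde\varepsilon,1,\dots,1) = \tfrac{1}{n}\sum_j (\psi+\psi^{-1})(\sigma_j)\,\sigma_j(\tilde\varepsilon,1,\dots,1)$ equals $\tfrac{1}{n}(\psi+\psi^{-1})(\sigma_i)\,\sigma_i(\tilde\varepsilon) \in H_{\fp_i}$. Letting $j_0$ be the unique index with $\tilde\sigma(\fp_{j_0})=\fp_1$, the first component of $\eta$ is then
\[
\tfrac{1}{n}(\psi+\psi^{-1})(\sigma_{j_0})\,\tilde\sigma\sigma_{j_0}(\tilde\varepsilon) \in H_{\fp_1}.
\]
Since $\tilde\sigma\sigma_{j_0}$ sends $\fp_1$ to itself, it lies in $\Delta_{\fp_1}$; and $\tilde\varepsilon$ is a $\chi_K$-eigenvector for $\Delta_{\fp_1}$ by Definition \ref{deftildevarepsilon}. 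Hence $\tilde\sigma\sigma_{j_0}(\tilde\varepsilon) = \chi_K(\tilde\sigma\sigma_{j_0})\tilde\varepsilon$ is a scalar multiple of $\tilde\varepsilon$, so the first component of $\eta$ lies in the $\chi_K$-eigenspace. Applying $\theta_{1,\Delta_{\fp_1}}$ then kills it, which gives the claimed triviality of the first component of $\log_p(z_1)$.

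The main potential obstacle is purely a matter of bookkeeping: carefully tracking how an element $\delta\in\Delta$ acts on $\prod_{i=1}^n H_{\fp_i}$, namely by simultaneous Galois conjugation on field elements together with the induced permutation of primes. Once that action is pinned down cleanly, the vanishing reduces to the elementary character identity $1+\chi_K(s)=0$.
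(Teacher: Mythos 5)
Your proof is correct and follows essentially the same route as the paper: both arguments boil down to the observation that the $\fp_1$-component of the relevant element is a multiple of $\tilde{\varepsilon}$, on which $\Delta_{\fp_1}$ acts through the nontrivial character $\chi_K$, so the projection $\theta_{1,\Delta_{\fp_1}}$ (equivalently, $\Delta_{\fp_1}$-invariance of $z_1$) forces it to vanish. Your write-up merely makes explicit the ``direct computation'' the paper leaves implicit, including the bookkeeping of the induced-module action and the index $j_0$ with $\tilde{\sigma}(\fp_{j_0})=\fp_1$.
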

\begin{proof}
	A direct computation, using that $\Delta_{\fp_1}$ acts by $\chi_K$ on $\tilde{\varepsilon}$, shows that the $i$th component of $z_1$ is a multiple of $\sigma_i\log_p(\tilde{\varepsilon})$. In particular, the first component is a multiple of $\log_p(\tilde{\varepsilon})$. On the other hand, $\Delta_{\fp_1}$ acts trivially on $z_1$, and therefore on the first component. This shows that the first component has to be $0$ since $\Delta_{\fp_1}$ acts by $\chi_K$ on $\log_p(\tilde{\varepsilon})$ (see Definition \ref{deftildevarepsilon}).
\end{proof}

\begin{corollary}\label{LGr=LanKQRpinert}
	Let $u_1$ and $v_1$ be as in \S\ref{SubsectionKQRpinert}. We have 
	\[
	\mathcal{L}^{\mathrm{Gr}}(\ad^0(g_\alpha)) \equiv \log_p(v_1) \quad (\mathrm{mod} \, \Q^\times).
	\]
	Thus, if $g$ satisfies Hypothesis (A1-2-3),
	\[
	\mathcal{L}^{\mathrm{Gr}}(\ad^0(g_\alpha)) \equiv \mathcal{L}^{\mathrm{an}}(\ad^0(g_\alpha)) \quad (\mathrm{mod} \, L^\times).
	\]
\end{corollary}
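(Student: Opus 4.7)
The plan is to extract from Corollary \ref{projidempotentKQRpinert} a single scalar equation determining $x\in 1+p\Z_p$, in the same spirit as the derivation of \eqref{conditionsj} in Section \ref{SectionIQpinert}, but exploiting the crucial simplification provided by Lemma \ref{formz1KQRpinert}. Concretely, I would apply the $\Delta$-equivariant isomorphism $\log_p$ to the identity $\theta(w)=c\,z_1$ of Corollary \ref{projidempotentKQRpinert} and project onto the first factor $H_{\fp_1}$ of $\prod_{i=1}^{n}H_{\fp_i}$. Expanding $\theta=\tfrac{1}{n}\sum_{i=1}^{n}(\psi+\psi^{-1})(\sigma_i)\sigma_i$ and using $w=(x,1/\pi,\dots,1/\pi)$, this should yield
\[
2\log_p(x)\;-\;\sum_{i=2}^{n}(\psi+\psi^{-1})(\sigma_i)\log_p(\sigma_i\pi)\;=\;c\cdot[\log_p(z_1)]_1,
\]
where $[\,\cdot\,]_1$ denotes the first component. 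By Lemma \ref{formz1KQRpinert}, the right-hand side is zero, so the unknown scalar $c$ drops out entirely. This is the decisive feature that distinguishes the present case from the imaginary-quadratic inert case of Section \ref{SectionIQpinert}: no auxiliary regulator of units intervenes, and we obtain a direct scalar relation between $\log_p(x)$ and the $\log_p(\sigma_i\pi)$.

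Next, I would rewrite the left-hand side in terms of $\log_p(v_1)$. Since $\Ind_{G_K}^{G_\Q}(\psi)$ is assumed irreducible, $\psi$ is nontrivial on $\Gal(H/K)=\{\sigma_1,\dots,\sigma_n\}$, hence $\sum_{i=1}^{n}(\psi+\psi^{-1})(\sigma_i)=0$. Using this to restore the missing $i=1$ term and invoking the identity $v_1=\theta(\pi)$ from Proposition \ref{vQRpinert}, I expect the displayed equation to reassemble into
\[
2\log_p(\pi x)\;=\;n\log_p(v_1).
\]
Since $\pi x\in\mathrm{UnivNorm}(H_\infty)\setminus\mu_{p-1}$ has $\ord_p(\pi x)$ a nonzero rational multiple of $khN$, Definition \ref{defLGr} then gives immediately
\[
\mathcal L^{\mathrm{Gr}}(\ad^0(g_\alpha))\;=\;\frac{\log_p(\pi x)}{\ord_p(\pi x)}\;\equiv\;\log_p(v_1)\pmod{\Q^\times},
\]
which is the first congruence.

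For the second congruence, I would combine this with Theorem \ref{main1'} together with Proposition \ref{uKQRpinert}. Since $u_1=0$, the formula in Theorem \ref{main1'} collapses to $\mathcal L^{\mathrm{an}}(\ad^0(g_\alpha))\equiv-\log_p(v_1)\pmod{L^\times}$, which agrees modulo $L^\times$ with the expression just obtained for $\mathcal L^{\mathrm{Gr}}(\ad^0(g_\alpha))$. I do not expect a serious obstacle here: the main conceptual content has already been absorbed into Corollary \ref{projidempotentKQRpinert} and Lemma \ref{formz1KQRpinert}, and the only delicate point is verifying that the right-hand side really vanishes once the idempotent $\theta$ is expanded out, which is ultimately just a consequence of the fact that $\Delta_{\fp_1}$ acts on $\log_p(\tilde\varepsilon)$ by the non-trivial character $\chi_K$ while acting trivially on $z_1$.
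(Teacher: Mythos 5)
Your proposal is correct and takes essentially the same route as the paper's own proof: take $\log_p$ of the identity in Corollary \ref{projidempotentKQRpinert}, project onto the $H_{\fp_1}$-component so that Lemma \ref{formz1KQRpinert} removes the unknown scalar $c$, identify the resulting relation with $\log_p(v_1)=\log_p(\theta(\pi))$ via Proposition \ref{vQRpinert}, and deduce the comparison with $\mathcal L^{\mathrm{an}}$ from Proposition \ref{uKQRpinert} and Theorem \ref{main1'}. The only cosmetic point is that the orthogonality relation $\sum_{i=1}^{n}(\psi+\psi^{-1})(\sigma_i)=0$ is not actually needed to restore the $i=1$ term (one simply adds $2\log_p(\pi)$ to both sides), but this does not affect the argument.
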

\begin{proof}
	By Corollary \ref{projidempotentKQRpinert} we have $\theta(\log_p(w)) = c \log_p(z_1)$ for some $c \in \Q_p$. Computing $\theta(\log_p(w))$ explicitly and taking the first component gives
	\[
	2\log_p(x) - \sum_{i = 2}^n \left(\psi + \psi^{-1}\right)(\sigma_i) \log_p(\sigma_i(\pi)) = 0,
	\]
	where we used Lemma \ref{formz1KQRpinert}. Therefore we can find the value of $2\log_p(x)$ in terms of the $p$-unit $\pi$ and compute the algebraic $\mathcal L$-invariant
	\[
	\mathcal L^{\mathrm{Gr}}(\ad^0(g_\alpha)) \equiv \log_p((\pi x)^2) = 2\log_p(x) + 2\log_p(\pi) = \log_p(\theta(\pi)) \quad (\mathrm{mod} \, \Q^\times).
	\]
	Hence, the first statement follows from Proposition \ref{vQRpinert}. The second statement follows from the first one, Proposition \ref{uKQRpinert} and Theorem \ref{main1'}. 
\end{proof}

\section{Exotic forms}\label{SectionExotic}
Consider the notation given in \S\ref{SectionIntroduction} and \S\ref{SectionLGr}. Let $g$ be an exotic form, i.e. $\ad^0(g)$ is irreducible, and let $p$ be a rational prime not dividing its level. Recall the hypothesis on $g$ stated in \S\ref{SectionLGr}:

\begin{enumerate}
	
	\item[(B1)] $g$ is $p$-regular, i.e. $\alpha/\beta \neq 1$, and
	
	\item[(B2)] $\varrho_g$ is not induced from a character of a real quadratic field in which $p$ splits
\end{enumerate}
(note that only Hypothesis (B1) is relevant in this case). Therefore, we can define  $\mathcal L^{\mathrm{Gr}}(\ad^0(g_\alpha))$ as in  Definition \ref{defLGr}. In addition, suppose that,

\begin{enumerate}
\item[(B3)] $\alpha/\beta \neq -1$.
\end{enumerate}
Under these hypothesis on $g$ we compute $\mathcal L^\mathrm{Gr}(\ad^0(g_\alpha))$ modulo $\Q^\times$. This allows us to prove our main result in this case.

As in \S\ref{SectionLGr}, $\fp_1, \dots , \fp_n$ are the primes of $H$ above $p$, where $\fp_1$ is the prime singled out by the embedding $\bar{\Q} \subset \bar{\Q}_p$. Denote by $\Delta = \Gal(H/\Q)$ and by $\Delta_{\fp_i} \subset \Delta$ the decomposition subgroup at $\fp_i$ for every $i$. Let $\sigma_1 = 1, \dots, \sigma_n \in \Delta$ such that $\sigma_i(\fp_1) = \fp_i$ and
\[
\Delta = \bigsqcup_{i = 1}^n \sigma_i\Delta_{\fp_1}.
\] 
To simplify the notation we will write $\varrho = \varrho_{\ad^0(g)}$, $\chi$ for its character and $\theta_\chi \in L[\Delta] \subset \Q_p[\Delta]$ for its idempotent of $\varrho$, i.e.
\[
\theta_{\chi} = \frac{3}{\#\Delta} \sum_{\sigma \in \Gal(H/\Q)} \chi(\sigma^{-1}) \sigma.
\]

\subsection{Study of the subspace $C \subset H^1(\Q_\Sigma/\Q,V)$} We proceed as
\S\ref{studyofCKQRpinert}: we will identify the image of $C$ by the isomorphism $H^1(\Q_\Sigma/\Q,V) \simeq \Hom_{\Delta}(\prod_{i = 1}^n U_{\fp_i,\Q_p}^1 /\overline{\cO_{H,\Q_p}^1} , V)$. However, the description of $C$ is a little more involved in this case.

\begin{definition}
	Let $\varepsilon_{\alpha/\beta} \in (\cO_H^1 \otimes L)^\varrho$ be a generator of the line of $(\cO_H^1 \otimes L)^\varrho$ where $\Frob_p \in G_{\Q_p}$ acts by $\alpha/\beta$.
\end{definition}

\begin{lemma}\label{localconditionC}
	Let $f \in \Hom_\Delta(\prod_{i = 1}^n U_{\fp_i,\Q_p}^1/\overline{\cO_{H,\Q_p}^1},V)$. If $f(\varepsilon_{\alpha/\beta}, 1, \dots , 1) = 0$ then $f \in C$.
\end{lemma}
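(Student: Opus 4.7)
The plan is to translate the hypothesis $f(\varepsilon_{\alpha/\beta}, 1, \dots, 1) = 0$ into a local vanishing at $p$ via the commutative diagram of Proposition \ref{f(w)=0KQIpinert}, and thereby conclude that $\lambda(f)$ lies in $H^1(\Q_p, \Q_p) \subset H^1(\Q_p, V/F^+V)$. Combined with the inclusion $W \subset H^1(\Q_p, \Q_p)$ from Proposition \ref{dimRHS}, this is exactly the condition $f \in C$ of Definition \ref{defC}.

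First, the commutative diagram in the proof of Proposition \ref{f(w)=0KQIpinert} identifies $\mu(\lambda(f))$ with the element $\bar f \in \Hom_{\Delta_{\fp_1}}(H_{\fp_1}^\times, V/F^+V)$ characterised by $\bar f(u) = \pi_{V/F^+V}(f(u, 1, \dots, 1))$ for $u \in U_{\fp_1}^1$, where $\pi_{V/F^+V}: V \to V/F^+V$ is the canonical projection. Specialising to $u = \varepsilon_{\alpha/\beta}$, viewed in $U_{\fp_1, \Q_p}^1$ through $\cO_H^1 \hookrightarrow \cO_{H_{\fp_1}}$, the hypothesis immediately yields $\bar f(\varepsilon_{\alpha/\beta}) = 0$ in $V/F^+V$.

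Next, I propagate this vanishing by $\Delta_{\fp_1}$-equivariance. As recalled in the proof of Proposition \ref{dimRHS}, $U_{\fp_1, \Q_p}^1$ is the regular representation of $\Delta_{\fp_1} = \langle \Frob_p \rangle$, so the $\alpha/\beta$-eigenspace of $\Frob_p$ on $U_{\fp_1, \Q_p}^1$ is one-dimensional. The image of $\varepsilon_{\alpha/\beta}$ lies in this eigenspace (by the defining relation $\Frob_p \cdot \varepsilon_{\alpha/\beta} = (\alpha/\beta)\varepsilon_{\alpha/\beta}$ combined with the equivariance of the embedding); assuming this image is nonzero, Schur's lemma forces the $\Q_p^{\alpha/\beta}$-component of the $\Delta_{\fp_1}$-equivariant map $\bar f|_{\cO_{H_{\fp_1}}^\times}$ to vanish identically. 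For the uniformizer $p$, the value $\bar f(p)$ must lie in $(V/F^+V)^{\Frob_p} = \Q_p$, since $\alpha/\beta \neq 1$ by Hypothesis (B1). Hence $\bar f$ takes values in $\Q_p \subset V/F^+V$, which proves $f \in C$.

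The main obstacle is the nonvanishing of the image of $\varepsilon_{\alpha/\beta}$ in $U_{\fp_1, \Q_p}^1$: this is a Leopoldt-type input on the $\varrho$-isotypic subspace of global units, needed to ensure that $\varepsilon_{\alpha/\beta}$ actually spans the one-dimensional $\alpha/\beta$-eigenspace, so that the vanishing at a single element propagates by equivariance to the entire eigenspace and ultimately pins down the one-dimensional subspace $C$ inside the two-dimensional ambient Hom space.
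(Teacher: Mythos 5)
Your argument is correct and is essentially the paper's own proof: the paper likewise pushes the hypothesis through the map $\mu$ of \eqref{rescftiso} and the diagram in Proposition \ref{f(w)=0KQIpinert} to get $\mu(\lambda(f))(\varepsilon_{\alpha/\beta})=0$, uses that the $\alpha/\beta$-eigenline of $H_{\fp_1}^\times\hat{\otimes}\Q_p$ (trivial rep plus regular rep of $\Delta_{\fp_1}$) is one-dimensional to force $\lambda(f)\in H^1(\Q_p,\Q_p)$, and concludes via Remark \ref{lambda(C)=intersection}. The nonvanishing of the local image of $\varepsilon_{\alpha/\beta}$ that you flag is implicitly assumed in the paper as well (its proof is just ``analogous to Lemma \ref{localconditionCKQRpinert}'', where the corresponding element $\tilde{\varepsilon}$ is chosen locally so the issue does not arise), so on that point you are in fact more explicit than the source.
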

\begin{proof}
	Analogous to the proof of Lemma \ref{localconditionCKQRpinert}.
\end{proof}

\begin{definition}
	Let $Z_C$ be the $\Delta$-equivariant subspace of $\prod_{i = 1}^n U_{\fp_i,\Q_p}^1$  generated by $\theta_\chi(\varepsilon_{\alpha/\beta}, 1, \dots, 1)$.
\end{definition}
\begin{proposition}\label{descriptionC}
	We have $Z_C \not \subset \left(\overline{\cO_{H,\Q_p}^1}\right)^{\varrho}$and $C \subset \Hom_{\Delta}(\prod_{i = 1}^n U_{\fp_i}^1/\overline{\cO_H^1}, V)$ can be identified with
	\[
	\Hom_\Delta\left(\prod_{i = 1}^n U_{\fp_i,\Q_p}^1/\langle \overline{\cO_{H,\Q_p}^1}, Z_C \rangle , V\right),
	\]
	where here (and from now on) $\langle \overline{\cO_{H,\Q_p}^1}, Z_C \rangle$ is the subspace of $\prod_{i = 1}^n U_{\fp_i,\Q_p}^1$ generated by $\overline{\cO_{H,\Q_p}^1}$ and $Z_C$.  Moreover,
	\begin{equation}\label{thetawexotic}
	\theta_{\chi}(w) \in \langle \overline{\cO_{H,\Q_p}^1}, Z_C \rangle.
	\end{equation}
\end{proposition}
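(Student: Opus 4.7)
The plan is to follow the template of Proposition~\ref{descriptionCKQRpinert}, with the added bookkeeping that $V=\ad^0(g)$ is now itself irreducible of dimension three. By the proof of Proposition~\ref{dimLHS}, $\varrho$ has multiplicity three in $\prod_{i=1}^n U_{\fp_i,\Q_p}^1$ and multiplicity one in $\overline{\cO_{H,\Q_p}^1}$, so the $\varrho$-isotypic of the quotient $\prod_{i=1}^n U_{\fp_i,\Q_p}^1/\overline{\cO_{H,\Q_p}^1}$ contains two copies of $\varrho$. The strategy is to produce in $Z_C$ one further copy of $\varrho$, so that together with $\overline{\cO_{H,\Q_p}^1}^\varrho$ it leaves a single copy of $\varrho$ in the quotient, and then to combine Schur's lemma with Lemma~\ref{localconditionC} and Proposition~\ref{dimLHS} to force the desired identification of $C$.

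First I would verify that $Z_C$ is a nonzero copy of $\varrho$. A direct computation gives that the $\fp_1$-component of $\theta_\chi(\varepsilon_{\alpha/\beta},1,\dots,1)$ equals $\tfrac{3}{\#\Delta}\sum_{\sigma\in\Delta_{\fp_1}}\chi(\sigma^{-1})\sigma(\varepsilon_{\alpha/\beta})$, which is nonzero because $\varepsilon_{\alpha/\beta}$ already lies in $(\cO_H^1\otimes L)^\varrho$. Second, I would show that $Z_C$ is not contained in $\overline{\cO_{H,\Q_p}^1}^\varrho$. Using the canonical decomposition $V|_{G_{\Q_p}}=\Q_p\oplus\Q_p^{\alpha/\beta}\oplus\Q_p^{\beta/\alpha}$ (which, thanks to hypotheses (B1) and (B3), yields three distinct $\Delta_{\fp_1}$-characters), the $\Q_p^{\alpha/\beta}$-line of $\overline{\cO_{H,\Q_p}^1}^\varrho$ is generated by the diagonal tuple $(\varepsilon_{\alpha/\beta},\dots,\varepsilon_{\alpha/\beta})$, whereas the analogous projection of a generator of $Z_C$ is non-diagonal, with trivial components at $\fp_i$ for $i\neq 1$; the two are therefore linearly independent.

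Given these two facts, $\langle\overline{\cO_{H,\Q_p}^1},Z_C\rangle$ accounts for two copies of $\varrho$ in $\prod_{i=1}^n U_{\fp_i,\Q_p}^1$, and Schur's lemma implies that $\Hom_\Delta\bigl(\prod_{i=1}^n U_{\fp_i,\Q_p}^1/\langle\overline{\cO_{H,\Q_p}^1},Z_C\rangle,V\bigr)$ is one-dimensional. Lemma~\ref{localconditionC} places this space inside $C$; since Proposition~\ref{dimLHS} says $C$ itself has dimension one, they coincide. For the last claim, I would take a generator $f\in C$; by Proposition~\ref{f(w)=0KQIpinert}, $f(w)=0$. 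Since $V$ is irreducible, $f$ factors through the $\varrho$-isotypic of the above quotient and, by Schur, restricts to an isomorphism on the remaining single copy of $\varrho$; hence $f(\theta_\chi(w))=f(w)=0$ forces $\theta_\chi(w)\in\langle\overline{\cO_{H,\Q_p}^1},Z_C\rangle$.

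The main obstacle I anticipate is the second step, namely making rigorous that $Z_C$ is genuinely disjoint from $\overline{\cO_{H,\Q_p}^1}^\varrho$: the naive ``non-diagonal'' argument relies on the three Frobenius eigenlines of $V$ being distinct $\Delta_{\fp_1}$-characters, and this is precisely where hypothesis (B3) enters. Without $\alpha/\beta\neq -1$, the two non-trivial eigenlines coincide as $\Delta_{\fp_1}$-representations, and the dimension count for the $\varrho$-isotypic of $\langle\overline{\cO_{H,\Q_p}^1},Z_C\rangle$ is no longer guaranteed.
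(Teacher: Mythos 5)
Your overall skeleton (the multiplicity counts from Proposition \ref{dimLHS}, Lemma \ref{localconditionC}, Schur's lemma, and the factorization $f=f\circ\theta_\chi$ for the final claim \eqref{thetawexotic}) matches the paper, but the pivotal second step --- that $Z_C \not\subset \bigl(\overline{\cO_{H,\Q_p}^1}\bigr)^{\varrho}$ --- is where your proof has a genuine gap, and it is exactly the step you flagged. First, the claimed shape of the generator of $Z_C$ is wrong: $\theta_\chi$ is a sum over all of $\Delta$, which permutes the places above $p$, so $\theta_\chi(\varepsilon_{\alpha/\beta},1,\dots,1)$ (and any of its $\Delta_{\fp_1}$-eigenprojections) has in general nontrivial components at every $\fp_i$, not only at $\fp_1$; there is no ``non-diagonal element supported at $\fp_1$'' to play off against the diagonal image of $\varepsilon_{\alpha/\beta}$. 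Second, and more seriously, no shape comparison of this kind can settle the question: $\overline{\cO_{H,\Q_p}^1}$ consists of the tuples $(\iota_1(\varepsilon),\dots,\iota_n(\varepsilon))$ of images of global units under the $n$ distinct completions, and deciding whether a given local tuple lies in this subspace amounts to knowing linear relations among $p$-adic logarithms of global units at the various places --- a global, Baker--Brumer type input, not a coordinate pattern. (Even the nonvanishing in your first step is not formal: $\varepsilon_{\alpha/\beta}$ is an $L$-linear combination of global units, so the nonvanishing of its image in $U_{\fp_1,\Q_p}^1$ already requires such transcendence input; this is unlike Proposition \ref{descriptionCKQRpinert}, where $\tilde{\varepsilon}$ is local by definition.)

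The paper sidesteps all of this by proving the non-containment by contradiction from the global dimension count: if $\theta_\chi(\varepsilon_{\alpha/\beta},1,\dots,1)$ lay in $\overline{\cO_{H,\Q_p}^1}$, then every $f\in\Hom_\Delta\bigl(\prod_{i=1}^n U_{\fp_i,\Q_p}^1/\overline{\cO_{H,\Q_p}^1},V\bigr)$ would satisfy $f(\varepsilon_{\alpha/\beta},1,\dots,1)=f\bigl(\theta_\chi(\varepsilon_{\alpha/\beta},1,\dots,1)\bigr)=0$, hence lie in $C$ by Lemma \ref{localconditionC}, forcing $C=H^1(\Q_\Sigma/\Q,V)$ and contradicting $\dim C=1<2=\dim H^1(\Q_\Sigma/\Q,V)$. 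Note also that $\dim C=1$ comes from Corollary \ref{lambda(C)} via Proposition \ref{propisomorphism} (this is where \cite[Theorem 2.2]{BD} enters), not from Proposition \ref{dimLHS} as you cite; and hypothesis (B3) is not needed for this proposition in the paper (it is only used later, in Proposition \ref{b=v/u}) --- it appears in your argument only because of the local route you attempted. Once the non-containment is obtained by the contradiction argument, the remaining steps of your proof go through essentially as in the paper.
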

\begin{proof}
	For the sake of contradiction suppose that $\theta_\chi(\varepsilon_{\alpha/\beta},1,\dots,1) \in \left(\overline{\cO_{H,\Q_p}^1}\right)^{\varrho}$. Then, for all $f \in \Hom_{\Delta}(\prod_{i = 1}^n U_{\fp_i,\Q_p}^1/\overline{\cO_{H,\Q_p}^1}, V)$ we would have that $f(\varepsilon_{\alpha/\beta},1, \dots, 1) = 0$ which implies that $f \in C$ by Lemma \ref{localconditionC}. Hence, $H^1(\Q_\Sigma/\Q,V) = C$ which is a contradiction since $C$ has dimension $1$. Thus, $Z_C$ is a $\Delta$-representation space isomorphic to $\varrho$ such that $Z_C \neq \left(\overline{\cO_{H,\Q_p}^1}\right)^{\varrho}$. On the other hand, the proof of Proposition \ref{dimLHS} shows that $\varrho$ has multiplicity $3$ in $\prod_{i = 1}^n U_{\fp_i,\Q_p}^1$ and multiplicity $1$ in $\overline{\cO_{H,\Q_p}^1}$. This shows that $\Hom_\Delta\left(\prod_{i = 1}^n U_{\fp_i,\Q_p}^1/\langle \overline{\cO_{H,\Q_p}^1}, Z_C \rangle , V\right)$ is $1$-dimensional. In order to see that this subspace is equal to $C$ we use Lemma \ref{localconditionC} and we proceed as in Proposition \ref{descriptionCKQRpinert}. The proof of the second statement is analogous to the proof of Corollary \ref{projidempotent}. 
\end{proof}

Since $w$ is fixed by $\Delta_{\fp_1}$, we can rewrite \eqref{thetawexotic} specifying which elements of $\overline{\cO_{H,\Q_p}^1}$ and of $Z_C$ appear in the expression for $\theta_\chi(w)$. Denote by $\theta_{1,\Delta_{\fp_1}} \in L[\Delta_{\fp_1}] \subset L[\Delta]$ the idempotent of the trivial representation of $\Delta_{\fp_1}$. 

\begin{definition}\label{defz1}
	Fix $\tilde{\sigma} \in \Gal(H/\Q)$ such that
	$\tilde{\sigma} \theta_{\chi} (\varepsilon_{\alpha/\beta}, 1, \dots , 1)$ has nontrivial projection on the line of $Z_C$ where $\Delta_{\fp_1}$ acts trivially. Define
	\[
	z_1 = \theta_{1, \Delta_{\fp_1}} \tilde{\sigma} \theta_\chi(\varepsilon_{\alpha/\beta}, 1, \dots , 1).
	\]
	Note that $z_1$ is a generator of the line of $Z_C$ where $\Delta_{\fp_1}$ acts trivially.
\end{definition}

\begin{definition}
	Fix $\varepsilon_1 \in \cO_{H_1}^1$ such that $\theta_\chi(\varepsilon_1) \neq 0$ in $\cO_H^\times \otimes L$.
\end{definition}

\begin{corollary}\label{thetaw}
	There exist $b,c \in \Q_p$ such that
	\begin{equation}\label{thetaw=ez}
	\theta_\chi(w)= b\left(\theta_\chi(\varepsilon_1), \dots , \theta_\chi(\varepsilon_1)\right) + c z_1. 
	\end{equation}
\end{corollary}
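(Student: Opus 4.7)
The plan is to start from Proposition \ref{descriptionC}, which already gives $\theta_\chi(w) \in \langle \overline{\cO_{H,\Q_p}^1}, Z_C \rangle$. Since $\theta_\chi$ is a central idempotent in $L[\Delta]$, applying $\theta_\chi$ to both sides refines this to $\theta_\chi(w) \in \bigl(\overline{\cO_{H,\Q_p}^1}\bigr)^\varrho + Z_C$. By Proposition \ref{descriptionC} these two irreducible $\Delta$-submodules isomorphic to $\varrho$ are distinct; since $\varrho$ is irreducible, their intersection must be trivial, so the sum is in fact a direct sum spanning two independent copies of $\varrho$ inside the $\varrho$-isotypic component of $\prod_{i=1}^n U_{\fp_i,\Q_p}^1$.

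The key idea is then to exploit an invariance property of $w$ to cut down to a two-dimensional subspace of $\bigl(\overline{\cO_{H,\Q_p}^1}\bigr)^\varrho \oplus Z_C$. I will argue that $w$ itself is fixed by $\Delta_{\fp_1}$: the first coordinate $x \in 1+p\Z_p \subset \Q_p$ is fixed because $\Delta_{\fp_1}$ acts trivially on $\Q_p$; for $i \geq 2$, although $\Delta_{\fp_1}$ may permute the remaining coordinates among themselves, all these coordinates carry the same value $1/\pi$, and $\pi \in H_1 = H^{\Delta_{\fp_1}}$ is itself $\Delta_{\fp_1}$-invariant, so the overall tuple is preserved. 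Centrality of $\theta_\chi$ then transfers this $\Delta_{\fp_1}$-invariance to $\theta_\chi(w)$.

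By Hypothesis (B1), Frobenius acts on $\varrho|_{G_{\Q_p}}$ with eigenvalues $1, \alpha/\beta, \beta/\alpha$, and since $\alpha/\beta \neq 1$ (hence also $\beta/\alpha \neq 1$), the eigenvalue $1$ has multiplicity exactly one. Since $\Delta_{\fp_1}$ is cyclic generated by Frobenius (as $p$ is unramified in $H$), the $\Delta_{\fp_1}$-fixed subspace of any copy of $\varrho$ is one-dimensional, so the $\Delta_{\fp_1}$-fixed part of $\bigl(\overline{\cO_{H,\Q_p}^1}\bigr)^\varrho \oplus Z_C$ has dimension two. A generator of the fixed line in $Z_C$ is $z_1$, by Definition \ref{defz1}. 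For the fixed line of $\bigl(\overline{\cO_{H,\Q_p}^1}\bigr)^\varrho$, the diagonal image $(\theta_\chi(\varepsilon_1), \dots, \theta_\chi(\varepsilon_1))$ is $\Delta_{\fp_1}$-invariant (because $\varepsilon_1 \in \cO_{H_1}^1$ is fixed by $\Delta_{\fp_1}$ and $\theta_\chi$ commutes with the $\Delta$-action) and nonzero by the choice of $\varepsilon_1$, hence spans this line. Writing $\theta_\chi(w)$ in this two-element basis yields the required $b, c \in \Q_p$.

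The main subtlety will be verifying that $w$ is genuinely $\Delta_{\fp_1}$-invariant, which requires careful bookkeeping of how the decomposition group simultaneously permutes the coordinates at the primes above $p$ and acts on the entries themselves. The secondary point to check is that the two fixed lines produced above are linearly independent, but this is immediate from the fact that $\bigl(\overline{\cO_{H,\Q_p}^1}\bigr)^\varrho$ and $Z_C$ are disjoint copies of the irreducible representation $\varrho$, each contributing a single line to the fixed subspace.
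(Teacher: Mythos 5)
Your argument is correct and is essentially the paper's own (the paper's proof is just "analogous to Corollary \ref{projidempotent}"): starting from \eqref{thetawexotic} of Proposition \ref{descriptionC}, using that $w$, and hence $\theta_\chi(w)$ by centrality of $\theta_\chi$, is fixed by $\Delta_{\fp_1}$, and identifying the $\Delta_{\fp_1}$-fixed lines of the two copies of $\varrho$ as spanned by $z_1$ and by the image of $\theta_\chi(\varepsilon_1)$. The one point you state a bit loosely is that the semi-local image $(\theta_\chi(\varepsilon_1),\dots,\theta_\chi(\varepsilon_1))$ is nonzero: this does not follow from the choice of $\varepsilon_1$ alone but from the multiplicity-one statement for $\varrho$ in $\overline{\cO_{H,\Q_p}^1}$ (Proposition \ref{dimLHS}, resting on \cite{GV}), which forces the completion map to be injective on the $\varrho$-isotypic part of the global units, exactly as in the remark preceding Corollary \ref{projidempotent}.
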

\begin{proof}
	Analogous to the proof of Corollary \ref{projidempotent}.
\end{proof}

\subsection{Computation of $u_1,\ u_{\beta/\alpha},\ v_1,\ v_{\beta/\alpha}$}\label{Subsectionuvexotic}
Here, we give a pair $\{u ,v\}$ as in \S\ref{SectionLan} and we use it to compute $u_1,\ u_{\beta/\alpha},\ v_1,\ v_{\beta/\alpha}$ in terms of $\varepsilon_1$ and $\pi$. 

Let $\theta_{1, \Delta_{\fp_1}}, \theta_{\alpha/\beta, \Delta_{\fp_1}}, \theta_{\beta/\alpha, \Delta_{\fp_1}} \in L[\Delta_{\fp_1}] \subset L[\Delta]$ be the idempotents corresponding to the 1-dimensional representations of $\Delta_{\fp_1}$ such that $\Fr_p \in G_{\Q_p}$ acts by $1, \alpha/\beta$ and $\beta/\alpha$ respectively. Following Definition \ref{definitionuv} we have to choose generators $e_1$ and $e_{\beta/\alpha}$ of the lines $L$ and $L^{\beta/\alpha}$ of $\ad^0(g)$. We do it in the following way. Let $e_1$ be any generator of $L$. Since $\ad^0(g)$ is irreducible, there exists $\sigma \in \Delta$ such that $\theta_{\beta/\alpha, \Delta_{\fp_1}}\sigma(e_1) \neq 0$. Without loss of generality assume that $\sigma = \sigma_2^{-1}\delta$, with $\delta \in \Delta_{\fp_1}$ (this choice will simplify some calculations later). Define $e_{\beta/\alpha} = \theta_{\beta/\alpha, \Delta_{\fp_1}}\sigma_2^{-1}(e_1)$. 

Now we give $u$. 

\begin{proposition}\label{tentativeu}
	There exists $u$, a generator of $\Hom_{G_\Q}(\ad^0(g), \cO_H^\times \otimes L)$, such that $u(e_1) = u_1 = \theta_\chi(\varepsilon_1)$ and $u(e_{\beta/\alpha}) = u_{\beta/\alpha} = \theta_{\beta/\alpha,\Delta_{\fp_1}}\sigma_2^{-1}\theta_\chi(\varepsilon_1)$.
\end{proposition}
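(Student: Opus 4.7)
The strategy is to define $u$ on the cyclic generator $e_1$ by setting $u(e_1) := \theta_\chi(\varepsilon_1)$, extend by $G_\Q$-equivariance, and then read off $u(e_{\beta/\alpha})$ from the definition $e_{\beta/\alpha} = \theta_{\beta/\alpha,\Delta_{\fp_1}}\sigma_2^{-1}(e_1)$. First I would recall from \S\ref{SectionLan} that $\dim_L \Hom_{G_\Q}(\ad^0(g), \cO_H^\times \otimes L) = 1$, so any nonzero $G_\Q$-equivariant morphism is automatically a generator. Consequently, it suffices to produce one such morphism realizing $u(e_1) = \theta_\chi(\varepsilon_1)$.

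The construction rests on two observations. On the one hand, by the canonical decomposition \eqref{G_Qpdecomposition}, the line of $\ad^0(g)$ fixed by $G_{\Q_p}$ (equivalently by $\Delta_{\fp_1}$) is exactly the line spanned by $e_1$. On the other hand, $\theta_\chi(\varepsilon_1)$ is a nonzero element of the $\varrho$-isotypical component $(\cO_H^\times \otimes L)^\varrho$ (nonzero by the choice of $\varepsilon_1$), and it is fixed by $\Delta_{\fp_1}$: indeed $\varepsilon_1 \in \cO_{H_1}^1$ is fixed by $\Delta_{\fp_1}$ and $\theta_\chi$ is central in $L[\Delta]$, so it commutes with the action of any $\delta \in \Delta_{\fp_1}$. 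Since $(\cO_H^\times \otimes L)^\varrho$ is isomorphic as an $L[G_\Q]$-module to $\ad^0(g)$ (multiplicity one, dimension three), the subspace on which $\Delta_{\fp_1}$ acts trivially is also one-dimensional. Hence $\theta_\chi(\varepsilon_1)$ generates exactly the subspace of $(\cO_H^\times \otimes L)^\varrho$ matching the line spanned by $e_1$ under any $G_\Q$-equivariant isomorphism.

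By Schur's lemma applied to the irreducible module $\ad^0(g)$, there is a unique $G_\Q$-equivariant isomorphism $u: \ad^0(g) \xrightarrow{\sim} (\cO_H^\times \otimes L)^\varrho$ with $u(e_1) = \theta_\chi(\varepsilon_1)$; equivalently, since $e_1$ cyclically generates $\ad^0(g)$ over $L[G_\Q]$, the prescription $\sigma(e_1) \mapsto \sigma(\theta_\chi(\varepsilon_1))$ extends consistently because the two cyclic $L[G_\Q]$-modules it identifies are both isomorphic to $\varrho$. Then $L[\Delta]$-linearity forces
\[
u(e_{\beta/\alpha}) = u\bigl(\theta_{\beta/\alpha,\Delta_{\fp_1}}\sigma_2^{-1}(e_1)\bigr) = \theta_{\beta/\alpha,\Delta_{\fp_1}}\sigma_2^{-1}\theta_\chi(\varepsilon_1),
\]
as required. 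Since $u(e_1) = \theta_\chi(\varepsilon_1) \neq 0$, the map $u$ is nonzero and therefore spans the one-dimensional Hom space. The only conceptual point requiring care is the well-definedness of $u$ from its value on $e_1$; this is built into the irreducibility of $\ad^0(g)$ via Schur's lemma, so no substantial obstacle is expected.
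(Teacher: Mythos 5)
Your proof is correct and is essentially an expanded version of the paper's (much terser) argument: the paper also takes a generator $u$ of the one-dimensional space $\Hom_{G_\Q}(\ad^0(g),\cO_H^\times\otimes L)$, notes that $u(e_1)$ can be normalized to be $\theta_\chi(\varepsilon_1)$ since both span the unique $\Delta_{\fp_1}$-fixed line of the multiplicity-one $\varrho$-isotypical component, and deduces the value of $u(e_{\beta/\alpha})$ from $\Delta$-equivariance applied to $e_{\beta/\alpha}=\theta_{\beta/\alpha,\Delta_{\fp_1}}\sigma_2^{-1}(e_1)$. One small caveat: your parenthetical ``equivalently, the prescription $\sigma(e_1)\mapsto\sigma(\theta_\chi(\varepsilon_1))$ extends because the two cyclic modules are isomorphic'' is not by itself a valid justification (isomorphic irreducible cyclic modules need not admit an equivariant map carrying a prescribed generator to a prescribed element), but this does not harm the proof because your preceding Schur-plus-fixed-line argument already establishes exactly the needed compatibility.
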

\begin{proof}
	By the definition of $\varepsilon_1$, there exists such a $u$ satisfying that $u(e_1) = \theta_\chi(\varepsilon_1) \neq 0$. The second condition follows from the $\Delta$-equivariance of $u$.
\end{proof}

And now we can determine $v$. Let $u$ be as in Proposition \ref{tentativeu}
\begin{proposition}\label{tentativev}
	There exists $v \in \Hom_{G_\Q}(\ad^0(g), \cO_H[1/p]^\times/p^\Z \otimes L)$ such that $v(e_1) = v_1 = \theta_\chi(\pi)$ and $v(e_{\beta/\alpha}) = v_{\beta/\alpha} = \theta_{\beta/\alpha,\Delta_{\fp_1}}\sigma_2^{-1}\theta_\chi (\pi)$. Moreover, $v$ is such that  $\{ u , v \}$ is a basis of $\Hom_{G_\Q}(\ad^0(g), \cO_H[1/p]^\times/p^\Z \otimes L)$.
\end{proposition}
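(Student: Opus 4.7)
The plan is to mimic the construction in Proposition \ref{tentativeu}, swapping the global unit $\varepsilon_1$ for the $p$-unit $\pi$ and then checking that the resulting $v$ is linearly independent from $u$.

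First I would establish the analogue of Lemma \ref{thetapineq0KQIpsplit} and Lemma \ref{thetapineq0} in the exotic setting: namely, that $\theta_\chi(\pi)$ is nonzero in $\cO_H[1/p]^\times/p^\Z \otimes L$ and, more importantly, does not lie in the subspace $\cO_H^\times \otimes L$. The idea is that by Dirichlet's $S$-unit theorem the classes of $\sigma_i(\pi)$ together with the global units span $\cO_H[1/p]^\times/p^\Z \otimes L$; if $\theta_\chi(\pi)$ were to lie in $\cO_H^\times \otimes L$, then applying Galois translates and using that $\Hom_{G_\Q}(\ad^0(g),-)$ is an exact functor on isotypical components would force the multiplicity of $\ad^0(g)$ in $\cO_H[1/p]^\times/p^\Z \otimes L$ and in $\cO_H^\times \otimes L$ to agree. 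These equal $2$ and $1$ respectively, giving the desired contradiction.

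Next I would construct $v$ by setting $v(e_1) := \theta_\chi(\pi)$ and extending $G_\Q$-equivariantly. This is well defined because $e_1$ generates the $G_{\Q_p}$-fixed line of $\ad^0(g)$ and $\theta_\chi(\pi)$ generates a line fixed by $\Delta_{\fp_1}$ inside the $\varrho$-isotypical component of $\cO_H[1/p]^\times/p^\Z \otimes L$ (indeed $\pi \in H_1 = H^{\Delta_{\fp_1}}$). Since $\ad^0(g)$ is irreducible, prescribing the value on any single nonzero vector determines a unique $G_\Q$-equivariant morphism. The formula $v_{\beta/\alpha} = \theta_{\beta/\alpha,\Delta_{\fp_1}} \sigma_2^{-1} \theta_\chi(\pi)$ is then a direct consequence of the definition $e_{\beta/\alpha} = \theta_{\beta/\alpha,\Delta_{\fp_1}} \sigma_2^{-1}(e_1)$ together with the $\Delta$-equivariance of $v$.

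Finally I would verify that $\{u,v\}$ is a basis. Since $\dim_L \Hom_{G_\Q}(\ad^0(g), \cO_H[1/p]^\times/p^\Z \otimes L) = 2$, it suffices to check linear independence. Any nontrivial relation $v = \lambda u$ with $\lambda \in L$ would give $\theta_\chi(\pi) = \lambda \theta_\chi(\varepsilon_1)$ inside $\cO_H[1/p]^\times/p^\Z \otimes L$. As $\theta_\chi(\varepsilon_1)$ lies in the image of $\cO_H^\times \otimes L$, this would place $\theta_\chi(\pi)$ in $\cO_H^\times \otimes L$, contradicting the first step. The only subtle point in the argument is the first step, where one must be careful in comparing multiplicities of $\ad^0(g)$ in the two $L[\Delta]$-modules; this is the analogue of the computations already carried out in the proof of Proposition \ref{dimLHS} and should cause no essential difficulty here.
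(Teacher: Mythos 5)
Your proof is correct and follows the same route the paper takes, which is simply to mirror the construction of $u$ in Proposition \ref{tentativeu} with $\pi$ in place of $\varepsilon_1$, and then to argue linear independence via the analogue of Lemma \ref{thetapineq0} (i.e.\ that $\theta_\chi(\pi)$ does not lie in $\cO_H^\times\otimes L$, using the multiplicity count $2$ versus $1$). You have just unpacked the terse ``analogous to Proposition \ref{tentativeu}'' and ``proceeding as in Lemma \ref{thetapineq0}'' that the paper offers.
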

\begin{proof}
	The proof of the first statement is analogous to the proof of Proposition \ref{tentativeu}. Proceeding as in Lemma \ref{thetapineq0} we see that $u$ and $v$ are linearly independent which shows the second statement.
\end{proof}

\subsection{Computation of $\mathcal L^\mathrm{Gr}(\ad^0(g_\alpha))$} Recall that Remark \ref{padiclog} justifies that the $p$-adic logarithm gives the $\Delta$-equivariant isomorphism $\log_p:\prod_{i=1}^n U_{\fp_i,\Q_p}^1 \to \prod_{i = 1}^n H_{\fp_i}$. From now on we will work with the image of this map. Let $w$ be as in Definition \ref{defwKQIpinert} and $z_1$ as in Definition \ref{defz1}. We will use Corollary \ref{thetaw} to determine the value of $x \in 1+ p\Z_p$ in terms of $u_1,u_{\beta/\alpha}, v_1, v_{\beta/\alpha}$.


\begin{lemma}\label{formz1}
	Let $z_1$ be as in Definition \ref{defz1}. The $i$th component of $\log_p(z_1)$ (i.e. the component corresponding to $H_{\fp_i}$) is a multiple of $\sigma_i \log_p(\varepsilon_{\alpha/\beta})$. In particular, the first component of $\log_p(z_1)$ is trivial.
\end{lemma}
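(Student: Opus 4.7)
The strategy is to track explicitly, at each step in the construction of $z_1$, the form of the component at $\fp_j$. Unwinding the $\Delta$-action on $\prod_{i=1}^n U_{\fp_i,\Q_p}^1\simeq \Ind_{\Delta_{\fp_1}}^\Delta U_{\fp_1,\Q_p}^1$, for $\tau\in\Delta$ and a tuple $v=(v^{(1)},\dots,v^{(n)})$ the $j$-th component of $\tau v$ equals $\tau(v^{(k)})$, where $k$ is uniquely determined by the condition $\tau\sigma_k\in\sigma_j\Delta_{\fp_1}$. Since $\Delta_{\fp_1}=\langle\Frob_p\rangle$ acts on $\varepsilon_{\alpha/\beta}$ through the character $\chi_{\alpha/\beta}$ with $\chi_{\alpha/\beta}(\Frob_p)=\alpha/\beta$, and the tuple $(\varepsilon_{\alpha/\beta},1,\dots,1)$ is concentrated in the first entry, direct expansion of $\theta_\chi$ shows that the $j$-th component of $\theta_\chi(\varepsilon_{\alpha/\beta},1,\dots,1)$ is a scalar multiple of $\sigma_j(\varepsilon_{\alpha/\beta})$; passing through the $\Delta$-equivariant logarithm of Remark \ref{padiclog}, its $j$-th component is a scalar multiple of $\sigma_j\log_p(\varepsilon_{\alpha/\beta})$.

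The second step is to check that this shape is preserved by the two remaining operations defining $z_1$. If $v^{(k)}=\mu_k\,\sigma_k\log_p(\varepsilon_{\alpha/\beta})$ and $\tau\in\Delta$ satisfies $\tau\sigma_k=\sigma_j\delta'$ with $\delta'\in\Delta_{\fp_1}$, then
\[
\tau\bigl(\mu_k\sigma_k\log_p(\varepsilon_{\alpha/\beta})\bigr)=\mu_k\,\chi_{\alpha/\beta}(\delta')\,\sigma_j\log_p(\varepsilon_{\alpha/\beta}),
\]
which again has the required form. Applying this first with $\tau=\tilde\sigma$ and then averaging over $\tau\in\Delta_{\fp_1}$ in the definition of $\theta_{1,\Delta_{\fp_1}}$ yields the first assertion: the $i$-th component of $\log_p(z_1)$ is a scalar multiple of $\sigma_i\log_p(\varepsilon_{\alpha/\beta})$.

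For the ``in particular'' statement, invoke that $z_1$ lies in the $\Delta_{\fp_1}$-fixed line of $Z_C$. Since $\Delta_{\fp_1}$ stabilises $\fp_1$, its action on the first component of any tuple is simply the action on $H_{\fp_1}$, so the first component of $\log_p(z_1)$ is $\Delta_{\fp_1}$-invariant. By the first assertion this component is a multiple of $\log_p(\varepsilon_{\alpha/\beta})$, on which $\Delta_{\fp_1}$ acts by the character $\chi_{\alpha/\beta}$, which is non-trivial by Hypothesis (B1); hence the only $\Delta_{\fp_1}$-invariant multiple of $\log_p(\varepsilon_{\alpha/\beta})$ is zero. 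The main obstacle is purely bookkeeping: one has to handle the permutation of the cosets $\sigma_i\Delta_{\fp_1}$ under each action carefully, but once the identification with the induced module is set up, the conclusion reduces to the standard fact that a non-trivial character of a cyclic group has no fixed vectors.
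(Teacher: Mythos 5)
Your proof is correct and takes essentially the same route as the paper, whose argument (given explicitly for Lemma \ref{formz1KQRpinert} and invoked here by analogy) is exactly this direct computation in the induced module: each component of $z_1$ is a multiple of $\sigma_i\log_p(\varepsilon_{\alpha/\beta})$, and the first component, being $\Delta_{\fp_1}$-invariant while $\Delta_{\fp_1}$ acts on $\log_p(\varepsilon_{\alpha/\beta})$ through the non-trivial character sending $\Frob_p\mapsto\alpha/\beta$ (Hypothesis (B1)), must vanish. Your explicit bookkeeping of the coset action and of the scalars $\chi_{\alpha/\beta}(\delta')$ just spells out what the paper leaves as ``a direct computation.''
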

\begin{proof}
	Analogous to the proof of Lemma \ref{formz1KQRpinert}.
\end{proof}

\begin{corollary}\label{1component}
	Let $b \in \Q_p$ be as in Corollary \ref{thetaw}. We have 
	\begin{equation}\label{firstcomponentexotic}
	\frac{3}{\#\Delta}\left(\#\Delta_{\fp_1} \log_p(x) - \sum_{i = 2}^n \sum_{\delta \in \Delta_{\fp_1}} \chi((\sigma_i \delta)^{-1}) \log_p(\sigma_i\pi)\right) = b \log_p(\theta_\chi\varepsilon_1).
	\end{equation}
\end{corollary}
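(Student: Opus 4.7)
The plan is to apply the $\Delta$-equivariant isomorphism $\log_p\colon \prod_{i=1}^n U_{\fp_i,\Q_p}^1 \to \prod_{i=1}^n H_{\fp_i}$ to the identity of Corollary \ref{thetaw} and then project onto the first coordinate (the component in $H_{\fp_1}$). The resulting identity should agree with \eqref{firstcomponentexotic} on the nose.

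Handling the right-hand side is immediate: by Lemma \ref{formz1} the first coordinate of $\log_p(z_1)$ vanishes, killing the $c z_1$ contribution, while the diagonal term contributes simply $b\log_p(\theta_\chi\varepsilon_1)$.

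The real content lies in computing the first coordinate of $\theta_\chi(\log_p w)$ on the left-hand side. I would expand $\theta_\chi = \frac{3}{\#\Delta}\sum_{\sigma \in \Delta} \chi(\sigma^{-1})\sigma$, partition $\Delta = \bigsqcup_i \sigma_i\Delta_{\fp_1}$, and use the standard action of $\Delta$ on $\prod_i H_{\fp_i}$, which combines the permutation of primes with the natural isomorphisms $\sigma\colon H_{\fp_j}\to H_{\sigma(\fp_j)}$. For $\sigma=\sigma_1\delta \in \Delta_{\fp_1}$, the prime $\fp_1$ is fixed, so the first coordinate of $\sigma\cdot\log_p w$ equals $\delta(\log_p x) = \log_p x$ because $x\in\Q_p$. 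For $i\geq 2$ and any $\delta\in\Delta_{\fp_1}$, the relevant entry of $w$ is $1/\pi$; since $\pi\in H_1$ is fixed by $\Delta_{\fp_1}$, the first coordinate of $\sigma_i\delta\cdot\log_p w$ is $-\log_p(\sigma_i\delta(\pi)) = -\log_p(\sigma_i\pi)$.

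Summing these contributions, the $i=1$ block requires evaluating $\sum_{\delta\in\Delta_{\fp_1}}\chi(\delta^{-1})$. Using the canonical decomposition $V|_{G_{\Q_p}} = \Q_p \oplus \Q_p^{\alpha/\beta}\oplus \Q_p^{\beta/\alpha}$ together with Hypothesis (B1), the eigenvalues $\alpha/\beta$ and $\beta/\alpha$ are nontrivial roots of unity on $\Delta_{\fp_1}$ and therefore sum to zero; the only surviving piece is the trivial line, giving $\sum_{\delta}\chi(\delta^{-1}) = \#\Delta_{\fp_1}$. Assembling the $i=1$ and $i\geq 2$ contributions together with the overall factor $\frac{3}{\#\Delta}$ produces exactly \eqref{firstcomponentexotic}. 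The only potential obstacle is bookkeeping of how the $\Delta$-action on $\prod_i H_{\fp_i}$ transforms the first coordinate, but this is a direct analogue of the computations already carried out in the proof of Corollary \ref{theta(w)=0} of \S\ref{SectionIQpsplits} and of Proposition \ref{expressionb} of \S\ref{SectionIQpinert}, so no new difficulty is expected.
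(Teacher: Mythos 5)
Your proposal is correct and follows the paper's own argument: apply $\log_p$ to the identity of Corollary \ref{thetaw}, kill the $c\,z_1$ term in the first component via Lemma \ref{formz1}, and compute the first component of $\theta_\chi(\log_p(w))$ by expanding over the cosets $\sigma_i\Delta_{\fp_1}$. The only step the paper leaves implicit --- that $\sum_{\delta\in\Delta_{\fp_1}}\chi(\delta^{-1})=\#\Delta_{\fp_1}$ because the restriction of $\ad^0(g)$ to $\Delta_{\fp_1}$ is $1\oplus\lambda\oplus\lambda^{-1}$ with $\lambda$ nontrivial by (B1) --- you verify correctly, so nothing is missing.
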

\begin{proof}
	We have $\theta_\chi(\log_p(w)) = b(\log_p(\theta_\chi\varepsilon_1), \dots, \log_p(\theta_\chi\varepsilon_1)) + c \log_p(z_1)$ by Corollary \ref{thetaw}. Computing the first component of  $\theta_\chi(\log_p(w))$ gives the desired equation once we note that the first component of $c\log_p(z_1)$ is trivial by Lemma \ref{formz1} .
\end{proof}

The corollary above justifies that in order to determine $x$ we just need to find the value of $b$ appearing in Corollary \ref{thetaw} (i.e., we do not need to compute the value of $c$). 

\begin{proposition}\label{b=v/u}
	Let $b \in \Q_p$ be as in Corollary \ref{thetaw}. Then
	\[
	b = -\frac{\log_p(v_{\beta/\alpha})}{\log_p( u_{\beta/\alpha})}.
	\]
\end{proposition}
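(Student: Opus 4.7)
The plan is to mirror the strategy of Proposition~\ref{expressionb}: apply to the logarithmic form of Corollary~\ref{thetaw} a $\Q_p$-linear projection $P\colon \prod_{i=1}^{n} H_{\fp_i}\to H_{\fp_1}$ tailored to annihilate both the $cz_1$ contribution and the $\log_p(x\pi)$ contribution, so that the surviving identity yields $b$ in closed form. Motivated by the shape of the definitions of $u_{\beta/\alpha}$ and $v_{\beta/\alpha}$ in Propositions~\ref{tentativeu} and \ref{tentativev}, I take
\[
P(h_1,\ldots,h_n)\;=\;\theta_{\beta/\alpha,\Delta_{\fp_1}}\,\sigma_2^{-1}(h_2),
\]
i.e.\ extract the $\fp_2$-component, transport it to $H_{\fp_1}$ via $\sigma_2^{-1}$, and then project onto the $\beta/\alpha$-eigenline of $\Delta_{\fp_1}$.

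The key preparatory step is the decomposition
\[
\log_p w\;=\;\iota(\log_p(x\pi))\;-\;D(\log_p\pi),
\]
where $\iota(h)=(h,0,\ldots,0)$ places $h\in H_{\fp_1}$ in the first factor and $D(a)=(a,\ldots,a)$ denotes the diagonal embedding of $H$ into $\prod_i H_{\fp_i}$ via the completions at $\fp_1,\ldots,\fp_n$. This follows from $w=(x,1/\pi,\ldots,1/\pi)$ by adding and subtracting $\log_p\pi$ in the first slot, and it has the virtue of packaging the universal-norm scalar $x\pi$ into a single factor. Since $D$ is $\Delta$-equivariant and $\theta_\chi$ is central in $L[\Delta]$, applying $\theta_\chi$ turns the diagonal term into $D(\log_p\theta_\chi\pi)$, while $\theta_\chi\iota(\log_p(x\pi))$ remains a tuple whose entries are $\Q_p$-multiples of the $\Delta_{\fp_1}$-fixed scalar $\log_p(x\pi)\in\Q_p$.

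I will then apply $P$ to each term of Corollary~\ref{thetaw} in logarithmic form. The $cz_1$-term is killed: by Lemma~\ref{formz1} the $\fp_2$-component of $\log_p z_1$ is a multiple of $\sigma_2\log_p\varepsilon_{\alpha/\beta}$, so after $\sigma_2^{-1}$ it becomes a multiple of $\log_p\varepsilon_{\alpha/\beta}$, which lies in the $\alpha/\beta$-eigenline of $\Delta_{\fp_1}$ and is annihilated by $\theta_{\beta/\alpha,\Delta_{\fp_1}}$. This is precisely the step where hypothesis (B3), together with (B1), is used, to guarantee that $\alpha/\beta\neq\beta/\alpha$. The $\iota(\log_p(x\pi))$-term is also killed because, after $\sigma_2^{-1}$, one has a $\Delta_{\fp_1}$-fixed scalar, and $\theta_{\beta/\alpha,\Delta_{\fp_1}}$ annihilates $\Delta_{\fp_1}$-fixed vectors since $\beta/\alpha$ is a nontrivial character of $\Delta_{\fp_1}$ by (B1). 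On any diagonal $D(a)$ with $a\in H$, a short computation gives $P(D(\log_p a))=\log_p\bigl(\theta_{\beta/\alpha,\Delta_{\fp_1}}\sigma_2^{-1} a\bigr)$; applying this to $D(\log_p\theta_\chi\pi)$ and to $bD(\log_p\theta_\chi\varepsilon_1)$, and invoking the defining identities of $v_{\beta/\alpha}$ and $u_{\beta/\alpha}$ in Propositions~\ref{tentativeu} and \ref{tentativev}, yields $\log_p v_{\beta/\alpha}$ and $b\log_p u_{\beta/\alpha}$ respectively. The surviving equation $-\log_p v_{\beta/\alpha}=b\log_p u_{\beta/\alpha}$ is exactly the formula claimed. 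The main technical hurdle is careful bookkeeping with the permutation $\Delta$-action on $\prod_i H_{\fp_i}$ and the various completions $H\hookrightarrow H_{\fp_i}$, but the universal-norm decomposition above carries the essential content of the argument.
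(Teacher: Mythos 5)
Your proposal is correct and takes essentially the same route as the paper: you apply the very same projection (take the $\fp_2$-component, transport it by $\sigma_2^{-1}$, then apply $\theta_{\beta/\alpha,\Delta_{\fp_1}}$), kill the $c\,z_1$ contribution via Lemma \ref{formz1} together with (B3), kill the scalar $\log_p(x\pi)$ contribution via (B1), and identify the surviving terms with $-\log_p(v_{\beta/\alpha})$ and $b\log_p(u_{\beta/\alpha})$ through Propositions \ref{tentativeu} and \ref{tentativev}. Your decomposition $\log_p w=\iota(\log_p(x\pi))-D(\log_p\pi)$ is just a tidier, equivariant repackaging of the paper's explicit double-sum manipulation (which instead uses the identity $\chi((\sigma_2\sigma_i\delta)^{-1})=\chi((\sigma_i\delta\sigma_2)^{-1})$), so no substantive difference.
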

\begin{proof}
	Proceeding in a similar way than the proof of the previous lemma, the logarithm of the second component of \eqref{thetaw=ez} of Corollary \ref{thetaw} is
	\begin{multline}\label{2component}
	\frac{3}{\#\Delta}\sigma_2\left(\sum_{\delta \in \Delta_{\fp_1}} \chi((\sigma_2 \delta )^{-1}) \log_p(x) - \sum_{i = 2}^n\sum_{\delta \in \Delta_{\fp_1}} \chi((\sigma_2 \sigma_i \delta )^{-1}) \log_p (\sigma_i \pi) \right) = \\ = \sigma_2 \left( b\log_p(\sigma_2^{-1}\theta_\chi\varepsilon_1) + c K \log_p(\varepsilon_{\alpha/\beta}) \right), 
	\end{multline}
where $K \in \Q_p$ (we are using Lemma \ref{formz1} to see that the second component of $c \log_p(z_1)$ is of the form $\sigma_2\left(cK\log_p(\varepsilon_{\alpha/\beta})\right)$). If we apply $\sigma_2^{-1}$ to both sides we obtain elements of $H_{\fp_1} = \log_p(U_{\fp_1,\Q_p}^1)$, which is a $\Delta_{\fp_1}$-representation space. Apply the idempotent $\theta_{\beta/\alpha , \Delta_{\fp_1}}$ to both sides. 
	
	Since $\beta/ \alpha \neq \alpha/\beta$ (by Hypothesis (B3)), Proposition \ref{tentativeu} shows that the right hand side equals to $b \log_p(u_{\beta/\alpha})$. We will now see that the right hand side equals to $-\log_p(v_{\beta/\alpha})$. First note that,
	\[
	\theta_{\beta/\alpha, \Delta_{\fp_1}} (\log_p(x)) = 0.
	\]
	On the other hand, by Proposition \ref{tentativev},
	\[
	v_{\beta/\alpha} = \theta_{\beta/\alpha, \Delta_{\fp_1}} \theta_{\chi}\sigma_2^{-1} \pi
	\]
	By computing
	\[
	-\theta_\chi \sigma_2^{-1}\pi = -\frac{3}{\#\Delta}\sum_{\delta \in \Delta_{\fp_1}} \chi((\delta \sigma_2)^{-1}) \pi - \frac{3}{\#\Delta}\sum_{i = 2}^n\sum_{\delta \in \Delta_{\fp_1}} \chi((\sigma_i \delta \sigma_2)^{-1}) \sigma_i \pi
	\]
	we note that, since $\chi((\sigma_2\sigma_i\delta)^{-1}) = \chi((\sigma_i \delta \sigma_2)^{-1})$ for any $\delta \in \Delta$, the second summand of the right hand side of \eqref{2component} equals to the second summand of the expression above. Moreover, the first summand of the expression above is annihilated by $\theta_{\beta/\alpha,\Delta_{\fp_1}}$. From here it is plain to obtain the desired result.
\end{proof}

Since we determined $b \in \Q_p$ as above, we use \eqref{firstcomponentexotic} to determine $x$ and compute $\mathcal L^\mathrm{Gr}(\ad^0(g_\alpha))$ modulo $\Q^\times$ proving the main theorem in this last case.

\begin{corollary}
	Let $\{u, v\}$ be as in \S\ref{Subsectionuvexotic}. We have 
	\[
	\mathcal{L}^{\mathrm{Gr}}(\ad^0(g_\alpha)) \equiv \log_p(v_1) - \frac{\log_p(v_{\beta/\alpha})}{\log_p(u_{\beta/\alpha})}\log_p(u_1) \quad (\mathrm{mod} \, \Q^\times).
	\]
	Thus, if $g$ satisfies Hypothesis (A1-2-3),
	\[
	\mathcal{L}^{\mathrm{Gr}}(\ad^0(g_\alpha)) \equiv \mathcal{L}^{\mathrm{an}}(\ad^0(g_\alpha)) \quad (\mathrm{mod} \, L^\times).
	\]
\end{corollary}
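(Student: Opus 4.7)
The strategy is to combine equation \eqref{firstcomponentexotic} of Corollary \ref{1component} with the value of $b$ determined in Proposition \ref{b=v/u}, solve for $\log_p(\pi x)$ modulo $\Q^\times$, and identify the answer with the expression of Theorem \ref{main1'}. Concretely, I would first substitute $b = -\log_p(v_{\beta/\alpha})/\log_p(u_{\beta/\alpha})$ into \eqref{firstcomponentexotic} and clear the factor $3/\#\Delta$, using the identification $\log_p(u_1) = \log_p(\theta_\chi \varepsilon_1)$ of Proposition \ref{tentativeu}. This yields an equation relating $\#\Delta_{\fp_1}\log_p(x)$, the Galois translates $\log_p(\sigma_i \pi)$ for $i = 2, \dots, n$, and the ratio $\log_p(v_{\beta/\alpha})\log_p(u_1)/\log_p(u_{\beta/\alpha})$ modulo $\Q^\times$.

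Next, I would add $\#\Delta_{\fp_1}\log_p(\pi)$ to both sides to produce $\#\Delta_{\fp_1}\log_p(\pi x)$ on the left, and recognize the resulting sum on the right as a rational multiple of $\log_p(v_1) = \log_p(\theta_\chi \pi)$ from Proposition \ref{tentativev}. The key observation is that $\pi \in H_1$ is $\Delta_{\fp_1}$-fixed, so
\[
\log_p(\theta_\chi \pi) = \frac{3}{\#\Delta}\sum_{i=1}^n \sum_{\delta\in\Delta_{\fp_1}} \chi((\sigma_i\delta)^{-1})\log_p(\sigma_i\pi),
\]
and the $i=1$ contribution $\sum_\delta \chi(\delta^{-1})\log_p(\pi)$ reduces to $\#\Delta_{\fp_1}\log_p(\pi)$ because, by Hypothesis (B1), the trivial representation of $\Delta_{\fp_1}$ occurs with multiplicity one in $\ad^0(g)|_{G_{\Q_p}}$. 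All surviving constants are rational and hence can be absorbed modulo $\Q^\times$.

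Finally, Definition \ref{defLGr} together with Proposition \ref{piepsilona} give $\mathcal L^{\mathrm{Gr}}(\ad^0(g_\alpha)) \equiv \log_p(\pi x) \pmod{\Q^\times}$, which yields the first claimed congruence. The second congruence then follows by rewriting
\[
\log_p(v_1) - \frac{\log_p(v_{\beta/\alpha})}{\log_p(u_{\beta/\alpha})}\log_p(u_1) = \frac{\log_p(u_{\beta/\alpha})\log_p(v_1) - \log_p(v_{\beta/\alpha})\log_p(u_1)}{\log_p(u_{\beta/\alpha})}
\]
and comparing with the formula for $\mathcal L^{\mathrm{an}}(\ad^0(g_\alpha))$ in Theorem \ref{main1'}; since $\Q^\times \subset L^\times$, the passage from mod $\Q^\times$ to mod $L^\times$ is automatic. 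The main bookkeeping obstacle is correctly tracking the rational scalars $\#\Delta$, $\#\Delta_{\fp_1}$ and $3/\#\Delta$ arising from $\theta_\chi$ and from the structure of $\mathrm{UnivNorm}(H_\infty)$ described in Lemma \ref{structUnivNorm}, but each lies in $\Q^\times$ and can therefore be discarded at the end.
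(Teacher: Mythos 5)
Your proposal is correct and follows essentially the same route as the paper: substitute the value of $b$ from Proposition \ref{b=v/u} into equation \eqref{firstcomponentexotic}, identify $\log_p(\theta_\chi\varepsilon_1)$ with $\log_p(u_1)$, absorb the sum $\sum_{i\geq 2}\sum_{\delta}\chi((\sigma_i\delta)^{-1})\log_p(\sigma_i\pi)$ together with the added $\#\Delta_{\fp_1}\log_p(\pi)$ term into $\log_p(v_1)=\log_p(\theta_\chi\pi)$ via Proposition \ref{tentativev}, and then use Definition \ref{defLGr} and Theorem \ref{main1'}, discarding rational constants throughout. Your observation that the $i=1$ contribution to $\theta_\chi\pi$ gives $\#\Delta_{\fp_1}\log_p(\pi)$ because the trivial representation has multiplicity one in $\ad^0(g)|_{\Delta_{\fp_1}}$ (Hypothesis (B1)) is exactly the right accounting, and matches the paper's implicit use of Proposition \ref{tentativev}.
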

\begin{proof}
	Note that by Proposition \ref{tentativeu} we have $\log_p(u_1) = \log_p(\theta_\chi\varepsilon_1)$ and by Proposition \ref{tentativev}
	\[
	\log_p (v_1) - \#\Delta_{\fp_1}\log_p(\pi) = \sum_{i = 2}^n \sum_{\delta \in \Delta_{\fp_1}} \chi((\sigma_i \delta)^{-1}) \log_p(\sigma_i(\pi)).
	\]
	Substituting this to the equation of Corollary \ref{1component} and using Proposition \ref{b=v/u} for the expression for $b$ gives
	\[
	\frac{3}{\#\Delta}(\#\Delta_{\fp_1})(\log_p(x) + \log_p (\pi)) = \log_p(v_1) - \frac{\log_p(v_{\beta/\alpha})}{\log_p(u_{\beta/\alpha})}\log_p(u_1).
	\]
	The first statement follows from this equation while for the second one we also have to use Theorem \ref{main1'}.
\end{proof}

\end{document}